\newtheorem{thm}{Theorem}
\newtheorem{lem}{Lemma}[section]
\newtheorem{pro}{Proposition}[section]
\newtheorem{rem}{Remark}
\newcommand{\dis}{\displaystyle}
\newcommand{\R}{{\Bbb R}}
\newcommand{\N}{{\Bbb N}}
\newcommand{\pa}{\partial}
\title{\Large\sf A higher speed type II blowup for the five dimensional energy critical heat equation}
\author{Junichi Harada
\\{\small Faculty of Education and Human Studies, Akita University}
\\[1mm]{\small email: harada-j@math.akita-u.ac.jp}}
\begin{document}
\maketitle
\thispagestyle{empty}

\begin{abstract}
This paper is concerned with blow-up solutions of
the five dimensional energy critical heat equation $u_t=\Delta u+|u|^\frac{4}{3}u$.
A goal of this paper is to show the existence of type II blowup solutions
which behave as $\|u(t)\|_\infty\sim(T-t)^{-3k}$ ($k=2,3,\cdots$).
Our solutions are the same one formally derived by
Filippas, Herrero and Vel\'azquez \cite{Filippas}.
\end{abstract}

\noindent
 {\bf Keyword}: semilinear heat equation; energy critical; type II blowup;
 matched asymptotic expansion

\section{Introduction}
This paper is concerned with blowup solutions for the semilinear heat equation.
 \begin{equation}\label{1.1}
 u_t = \Delta u+|u|^{p-1}u
 \qquad
 \text{in }
 \R^n\times(0,T).
 \end{equation}
This problem is a simple model of nonlinear diffusion problems.
Various complex and interesting phenomena have been found for this 30 years.
Our concern in this paper is a blowup caused by a concentration.
A local solvability of this problem is well understood,
and a blow up occurs at $t=T$ if $\limsup_{t\to T}\|u(t)\|_\infty=\infty$.
For a blowup solution,
the blowup is called type I if
$\limsup_{t\to T}(T-t)^\frac{1}{p-1}\|u(t)\|_\infty<\infty$,
and type II if $\limsup_{t\to T}(T-t)^\frac{1}{p-1}\|u(t)\|_\infty=\infty$.
A typical type I blowup solution is given by
 \begin{equation}\label{1.2}
 w(x,t) = (p-1)^{-\frac{1}{p-1}}(T-t)^{-\frac{1}{p-1}}.
 \end{equation}
In the study of blowup problems,
there are two important critical values of $p$ defined by
 \[
 p_{\text{S}}=\frac{n+2}{n-2},
 \hspace{10mm}
 p_{\text{JL}}=
 \begin{cases}
 \infty & \text{if } n\leq10,
 \\ \dis
 1+\frac{4}{n-4-2\sqrt{n-1}} & \text{if } n\geq11.
 \end{cases}
 \]
For the case $1<p<p_\text{S}$,
it is well known that every blowup solution is locally approximated by \eqref{1.2},
namely type I (see \cite{Giga}).
On the other hand,
for the case $p\geq p_S$,
different types of blowup behavior are observed.
A type II blowup solution is first discovered by Herrero and Vel\'azquez
\cite{Herrero1,Herrero2} (see also \cite{Mizoguchi}).
They construct type II blowup solutions with the exact blowup rates
for the case $p>p_{\text{JL}}$.
Very recently Seki \cite{Seki} proves the existence of type II blowup solutions
for the case $p=p_{\text{JL}}$.
In the middle range of $p\in(p_\text{S},p_\text{JL})$,
Matano and Merl \cite{Matano} exclude the occurrence of a type II blowup
under a radial setting.
Another example of a type II blowup is found by Filippas, Herrero and Vel\'azquez
\cite{Filippas} for the energy critical case $p=p_{\text{S}}$.
They formally obtain a type II blowup
by using the matched asymptotic expansion approach.
The blowup rate of their solutions are given by
(the blowup rates for $n=5$ p. 2971 and $n=6$ p. 2972 in \cite{Filippas}
seem to be incorrect by a trivial miscalculation)
\begin{equation}\label{1.3}
 \|u(t)\|_\infty
 \sim
 \begin{cases}
 (T-t)^{-k} & n=3,
 \\
 (T-t)^{-k}|\log(T-t)|^\frac{2k}{2k-1} & n=4,
 \\
 (T-t)^{-3k} & n=5 \quad (\text{corrected}),
 \\
 (T-t)^{-\frac{5}{2}}|\log(T-t)|^{\frac{15}{4}} & n=6 \quad (\text{corrected}),
 \end{cases}
\end{equation}
where $k=1,2,3,\cdots$.
As for a higher dimensional case $n\geq7$,
the possibility of a type II blowup near the ground states is ruled out
by Collot, Merle and Rapha\"el \cite{Collot}.
The first rigorous proof of the existence of a type II blowup
for the energy critical case is given by Schweyer \cite{Schweyer}.
He constructs a type II blowup solution for $n=4$
by adapting the energy method developed
in the study of geometrical dispersive problems
(\cite{Raphael,Merle-R-R}) to the problem \eqref{1.1}.
The blowup rate of his solution coincides with $k=1$ in \eqref{1.3}.
Very recently Cort\'azar, del Pino and Musso \cite{delPino3} obtain a type II blowup
for $n=5$ with the same blowup rate as $k=1$ in \eqref{1.3}.
They apply so-called the inner-outer gluing method
developed in \cite{Cortazar,Davila,delPino1}.
Furthermore
a new type of type II blowup not listed in \eqref{1.3} is found
by del Pino, Musso and Wei \cite{delPino2} for $n\geq7$.
In this paper,
we prove the existence of type II blowup solutions
for $n=5$ with a higher blowup speed.
The solutions constructed here give the first example for $k\geq2$ in \eqref{1.3}.

\section{Main result}
Let ${\sf Q}_\lambda(x)$ be the positive radial stationary solution given by
 \[
 {\sf Q}_\lambda(x)
 =
 \lambda^{-\frac{n-2}{2}
 }\left( 1+\frac{1}{n(n-2)}\frac{|x|^2}{\lambda^2} \right)^{-\frac{n-2}{2}}
 \qquad
 (\text{ground state}).
 \]
\begin{thm}\label{Thm1}
 Let $n=5$ and $p=p_\text{S}$.
 For any integer $l\geq1$ and any two constants $A>0$, $\kappa\in(0,1)$,
 there exist $T>0$ and a radial solution
 $u(x,t)\in C(\R^5\times[0,T))\cap C^{2,1}(\R^5\times(0,T))$
 of \eqref{1.1}  such that
 \[
 u(x,t)
 =
 {\sf Q}_{\lambda(t)}(x)+v(x,t),
 \]
 where $\lambda(t)$ and $v(x,t)$ satisfy
 \[
 \left| \lambda(t)-A(T-t)^{2l+2} \right|
 <
 \kappa A(T-t)^{2l+2}
 \qquad\text{\rm and}\qquad
 v(x,t)\in L^\infty(\R^5\times(0,T)).
 \]
\end{thm}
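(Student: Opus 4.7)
My plan is to promote the formal matched asymptotic expansion of Filippas--Herrero--Vel\'azquez \cite{Filippas} to a precise approximate solution and then close the argument by modulation combined with a topological selection of initial data, in the spirit of Schweyer's $n=4$ construction \cite{Schweyer}. I would write
\[
 u(x,t) = {\sf Q}_{\lambda(t)}(x) + V^*(x,t) + w(x,t),
\]
where $V^*$ collects the higher order correctors dictated by the formal expansion and $w$ is a remainder to be driven small; $\lambda(t)$ is a modulation parameter whose leading behaviour is to be forced to equal $A(T-t)^{2l+2}$ (note that this gives $\|u(t)\|_\infty\sim\lambda^{-3/2}\sim(T-t)^{-3(l+1)}$, matching \eqref{1.3} with $k=l+1$).

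The first step is the construction of $V^*$. In the inner zone $y=x/\lambda(t)$ I would expand
\[
 u(x,t) \approx \lambda^{-3/2}\Bigl(Q(y) + \sum_{j=1}^{N}\lambda^{2j}\phi_j(y)\Bigr),
 \qquad Q(y)=\Bigl(1+\tfrac{|y|^2}{15}\Bigr)^{-3/2},
\]
so that each $\phi_j$ solves an inhomogeneous linearised equation $L\phi_j=F_j$ with $L=-\Delta-\tfrac{7}{3}Q^{4/3}$ and $F_j$ built from $\phi_1,\dots,\phi_{j-1}$ and time-derivatives of $\lambda$. For $n=5$ the $\phi_j$ grow at infinity as a genuine power $|y|^{2j-3}$ (rather than the logarithmic growth peculiar to $n=4$), which is the structural reason the five-dimensional case admits the purely algebraic rate $(T-t)^{2l+2}$. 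In the self-similar outer zone $z=x/\sqrt{T-t}$ the relevant operator is $\mathcal L=-\Delta+\tfrac{1}{2}z\cdot\nabla$, whose radial Hermite eigenfunctions $H_{2j}$ have eigenvalues $-j$. I would take the outer leading term to be a multiple of the Hermite polynomial $H_{2l}$ and match the two expansions in an overlap region $\lambda\ll|x|\ll\sqrt{T-t}$; this both forces $\lambda\sim A(T-t)^{2l+2}$ and fixes the constants required to define $V^*$ globally.

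Substituting this ansatz into \eqref{1.1} yields
\[
 w_t = \Delta w + p({\sf Q}_\lambda+V^*)^{p-1}w + N(w) + E,
\]
with $E$ the approximation error and $N(w)$ quadratic in $w$. Pushing the inner expansion to a sufficiently high order $N=N(l)$ makes $E$ as small as needed in suitable weighted norms. The linearised operator inherits from the scaling invariance a one-dimensional kernel spanned by $\Lambda{\sf Q}_\lambda=\pa_\lambda{\sf Q}_\lambda$; I would eliminate it by imposing $\langle w(\cdot,t),\Lambda{\sf Q}_{\lambda(t)}\rangle=0$, which produces an ODE for $\lambda(t)$ whose principal part reproduces the formal rate and, by suitably choosing the initial scale, gives $|\lambda(t)-A(T-t)^{2l+2}|<\kappa A(T-t)^{2l+2}$.

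The main obstacle will be closing the bootstrap on $w$. The inner operator $L$ is coercive on the codimension-one subspace orthogonal to $\Lambda Q$, but the outer operator $\mathcal L$, viewed through the backward parabolic dynamics, has $l$ unstable radial modes $H_0,H_2,\dots,H_{2l-2}$ with eigenvalues $0,-1,\dots,-(l-1)$. These trap the trajectory on a finite-codimension manifold of initial data. I would handle them by a Brouwer-type argument: parameterise $l$ real degrees of freedom in the initial condition by the unstable Hermite amplitudes, show that every exit from the trapping region would be transversal, and conclude by continuity that at least one initial condition avoids exit through every boundary. Combined with weighted energy estimates adapted to both the inner scale $\lambda$ and the self-similar scale $\sqrt{T-t}$, this yields $\|w(\cdot,t)\|_\infty\to 0$ as $t\to T$, whence $v:=V^*+w\in L^\infty(\R^5\times(0,T))$ as stated.
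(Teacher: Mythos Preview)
Your overall strategy is genuinely different from the paper's: the paper uses the inner--outer gluing scheme of Cort\'azar--del~Pino--Musso together with a Schauder fixed-point argument, not a Schweyer-type energy/bootstrap construction. It takes only the single exact heat solution $\Theta(x,t)=-(T-t)^l e_l(z)$ as the outer corrector (no higher inner profiles $\phi_j$), splits the remainder as $\lambda^{-3/2}\epsilon(y,t)\chi_{\rm in}+w(x,t)$, constructs $\epsilon$ on a large ball $B_{2R}$ by direct comparison/parabolic estimates (Section~6), and obtains $w$ by solving a linear heat equation with the resulting source and closing in a weighted $L^\infty$ ball via the Schauder theorem (Sections~7--8). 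The unstable outer Hermite directions are killed by an explicit choice of finitely many initial parameters rather than by a Brouwer argument, and no energy identities are used.

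There is a concrete gap in your plan. The inner linearised operator $L=-\Delta_y-pQ^{p-1}$ is \emph{not} coercive on the orthogonal complement of $\Lambda Q$: besides the scaling zero mode it carries a genuine negative eigenvalue $\mu_1<0$ with an exponentially decaying radial eigenfunction $\psi_1$ (Section~3.1). In the inner fast time $ds=\lambda^{-2}\,dt$ this produces an $e^{|\mu_1|s}$ instability, far more violent than any outer Hermite mode, and it cannot be absorbed by modulating $\lambda$ alone. The paper neutralises it by spending one extra scalar parameter ${\sf d}_{\rm in}$ in the initial data for the inner piece (Section~6.2); your codimension-one claim would make the inner bootstrap blow up. Your outer count is also short by one: to force the remainder strictly below the $(T-t)^l$ scale one must tune the $l+1$ Hermite amplitudes $e_0,\dots,e_l$ (the paper's ${\bf d}\in\R^{l+1}$, Section~7.1), including the neutral mode $e_l$, not just the $l$ strictly unstable ones. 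With these two additional parameters your scheme becomes plausible; as written it would not close.
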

\begin{rem}
The blowup rate of the solution obtained in Theorem \ref{Thm1} is given by
 \[
 \|u(t)\|_\infty
 \sim
 \|{\sf Q}_{\lambda(t)}\|_\infty
 \sim
 \lambda(t)^{-\frac{n-2}{2}}
 \sim
 (T-t)^{-3(l+1)}
 \qquad (l=1,2,3,\cdots).
 \]
 This solution gives an example for $k=l+1\geq2$ in \eqref{1.3}.
\end{rem}
\begin{rem}
 Our strategy is based on so-called the inner-outer gluing method used
 in \cite{delPino3}.
 We look for a solution of the form
 \[
 u(x,t)
 =
 {\sf Q}_{\lambda(t)}
 +
 \underbrace{
 \Theta(x,t)
 +
 \lambda^{-\frac{n-2}{2}}\epsilon(y,t)+w(x,t)
 }_{=v(x,t)},
 \qquad y=\frac{x}{\lambda}.
 \]
 The function $\Theta(x,t)$ is a particular solution of $\Theta_t=\Delta_x\Theta$.
 In \cite{delPino3},
 they consider a simpler case,
 where $\Theta(x,t)$ is chosen to be a constant function.
 In this paper,
 we try other types of a particular solution $\Theta(x,t)$ satisfying
 $\Theta(x,t)\sim(T-t)^l$ for $|x|\sim\sqrt{T-t}$.
 This contributes to the blowup rate.
 Generally the function $\Theta(x,t)$ can not be chosen arbitrarily,
 since it must satisfy a certain matching condition (see Section \ref{Sec4}).
 The functions $\epsilon(y,t)$ and $w(x,t)$ describe the behavior
 in the inner region $|x|\sim\lambda(t)$ and
 in the self-similar region $|x|\sim\sqrt{T-t}$ respectively.
 Since the behavior of the inner solution $\epsilon(y,t)$ is
 almost the same as that of \cite{delPino3},
 it can be treated in the same manner.
 A main part of this paper is to handle the outer solution $w(x,t)$.
 To derive an appropriate decay estimate of $w(x,t)$,
 we borrow techniques from \cite{Herrero1,Herrero2}
 (see also \cite{Mizoguchi,Seki}),
 where they treat a different type of outer solutions.
\end{rem}

\section{Preliminary}
Throughout this paper,
$\chi(\xi)\in C^\infty(\R)$ stands for a standard cut off function satisfying
\[
 \chi(\xi)=
 \begin{cases}
 1 & \text{if } \xi<1,\\
 0 & \text{if } \xi>2.
 \end{cases}
\]
Furthermore we write
 \[
 f(u)=|u|^{p-1}u,
 \qquad
 p=\frac{n+2}{n-2}.
 \]

\subsection{Linearization aound the ground state}
\label{Sec3.1}
Let us consider the eigenvalue problem related to a linearization around
the ground state ${\sf Q}(y)={\sf Q}_\lambda(y)|_{\lambda=1}$.
 \begin{equation}\label{3.1}
 -H_y\psi=\mu\psi \qquad \text{in } \R^n,
 \end{equation}
where the operator $H_y$ is define by
 \[
 H_y=\Delta_y+V(y),
 \qquad
 V(y)=f'({\sf Q}(y))=p{\sf Q}(y)^{p-1}.
 \]
We recall that the operator $H_y$ has a negative eigenvalue $\mu_1<0$ and a zero eigenvalue.
We denote by $\psi_1(r)$ a positive radial eigenfunction
associated to the negative eigenvalue with $\psi_1(0)=1$.
Furthermore
there exists $C>0$ such that
 \[
 \psi_1(r)
 <
 C\left( 1+r \right)^{-\frac{n-1}{2}}
 e^{-\sqrt{|\mu_1|}\,r}.
 \]
The eigenfunction associated to a zero eigenvalue is explicitly given by
 \[
 \Lambda_y{\sf Q}(y)=\left( \frac{n-2}{2}+y\cdot\nabla_y \right){\sf Q}(y).
 \]

\subsection{Purterbated linearized problem}
\label{Sec3.2}
We next consider the eigenvalue problem \eqref{3.1} in a bounded but very large domain.
 \begin{equation}\label{3.2}
 \begin{cases}
 -H_y\psi=\mu\psi & \text{in } B_R,
 \\
 \psi=0 & \text{on } \pa B_R,
 \\
 \psi \text{ is raidal}.
 \end{cases}
 \end{equation}
We denote the $i$th eigenvalue of \eqref{3.2} by $\mu_i^{(R)}$
and the associated eigenfunction by $\psi_i^{(R)}$.
We normalize $\psi_i^{(R)}(r)$ as  $\psi_i^{(R)}(0)=1$.
Most of the lemmas stated in this subsection are proved in Section 7 \cite{Cortazar}.
However for the sake of convenience,
we give the proofs.
Throughout this subsection,
we write
 \[
 k_1\lesssim k_2
 \qquad
 (k_1,k_2>0)
 \]
if there is a universal constant $c>0$ independent of $R$ such that $k_1<ck_2$.
This definitions will be changed slightly in Section \ref{Sec5.3}. 
 \begin{lem}\label{Lem3.1}
 It holds that for any $R>0$
 \[
 0
 <
 \psi_1^{(R)}(r)
 \lesssim
 \left( 1+r \right)^{-\frac{n-1}{2}}
 e^{-\sqrt{|\mu_1|}\,r}
 \qquad\text{\rm for } r\in(0,R).
 \]
 \end{lem}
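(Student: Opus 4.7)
The plan is to combine positivity (principal eigenfunction theory), a sharp comparison $\mu_1^{(R)}\to\mu_1$, and a maximum principle barrier using the same decay rate as the full-space eigenfunction. Positivity of $\psi_1^{(R)}$ comes for free: $-H_y$ with Dirichlet condition on $B_R$ is self-adjoint, radially symmetric and bounded below, so its first eigenfunction is sign-definite (Krein--Rutman, or a standard radial Sturm--Liouville argument), and the normalization $\psi_1^{(R)}(0)=1$ fixes $\psi_1^{(R)}>0$ on $B_R$. For the eigenvalue I plan to show $0 \le \mu_1^{(R)}-\mu_1 \lesssim e^{-c R}$ for $R$ large: the lower bound is domain monotonicity via $H^1_0(B_R)\hookrightarrow H^1(\R^n)$; the upper bound follows by plugging the test function $\chi(2|y|/R)\psi_1(y)$ into the Rayleigh quotient and using the exponential decay of the full-space eigenfunction recalled in Section \ref{Sec3.1}. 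In particular $\mu_1^{(R)}<0$ and $|\mu_1^{(R)}|\le|\mu_1|$.

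For the decay estimate I take the barrier $\bar\psi(r)=A\,r^{-(n-1)/2}e^{-\sqrt{|\mu_1|}\,r}$. The identity
\[
\Delta_r\bigl(r^{-(n-1)/2}u\bigr) = r^{-(n-1)/2}\left(u''-\frac{(n-1)(n-3)}{4r^2}u\right),
\]
applied with $u=e^{-\sqrt{|\mu_1|}\,r}$, yields
\[
(-H_y-\mu_1^{(R)})\bar\psi = \left[\frac{(n-1)(n-3)}{4r^2} - V(r) - (\mu_1^{(R)}-\mu_1)\right]\bar\psi.
\]
Since $V(r)\lesssim r^{-4}$ and the eigenvalue deficit is $\lesssim e^{-c R}\ll R^{-2}$, the bracket is strictly positive on an annulus $\{R_0\le r\le R\}$ for a fixed large $R_0$ and $R$ large enough, so $\bar\psi$ is a supersolution there. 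A Harnack estimate applied to the positive eigenfunction gives $\|\psi_1^{(R)}\|_{L^\infty(B_{R_0})}\le C$ uniformly in $R$; choosing $A$ large (independent of $R$) yields $\bar\psi\ge\psi_1^{(R)}$ on $\partial B_{R_0}$, while trivially $\bar\psi>0=\psi_1^{(R)}$ on $\partial B_R$. The weak maximum principle for $-H_y-\mu_1^{(R)}$, whose zeroth order coefficient $|\mu_1^{(R)}|-V$ is positive on the annulus, then forces $\psi_1^{(R)}\le\bar\psi$ there. The range $r\le R_0$ is absorbed by the Harnack bound (the right hand side is bounded below by a positive constant on $[0,R_0]$), and the finitely many moderate values of $R$ not covered by the above are treated by standard elliptic regularity.

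The main obstacle is the sharp rate $\sqrt{|\mu_1|}$: because $|\mu_1^{(R)}|\le|\mu_1|$, the equation alone would only give the slightly slower rate $\sqrt{|\mu_1^{(R)}|}$. The proof goes through because the positive angular term $\tfrac{(n-1)(n-3)}{4r^2}$ coming from the radial weight $r^{-(n-1)/2}$ dominates the exponentially small eigenvalue deficit $\mu_1^{(R)}-\mu_1$ on the entire annulus up to $r=R$, which is precisely why the exponential convergence of $\mu_1^{(R)}$ to $\mu_1$ (rather than just polynomial) is needed.
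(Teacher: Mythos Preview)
Your argument is correct but takes a genuinely different route from the paper. The paper gives a short variational proof: assuming a first crossing point $r_1$ where $\psi_1^{(R)}(r_1)=2\psi_1(r_1)$, it sets $\psi_0=(2\psi_1-\psi_1^{(R)})\,{\bf 1}_{\{r<r_1\}}\in H_0^1(B_R)$ and computes, using only the domain--monotonicity inequality $\mu_1^{(R)}>\mu_1$,
\[
\int_{B_R}\bigl(|\nabla\psi_0|^2-V\psi_0^2\bigr)\,dy
=\mu_1^{(R)}\|\psi_0\|_2^2+(\mu_1-\mu_1^{(R)})\int_{B_R}2\psi_1\psi_0\,dy
<\mu_1^{(R)}\|\psi_0\|_2^2,
\]
contradicting the variational characterization of $\mu_1^{(R)}$. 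This yields $\psi_1^{(R)}<2\psi_1$ for \emph{every} $R>0$ at once, with no case splitting and no quantitative information on $\mu_1^{(R)}-\mu_1$ beyond its sign. Your barrier/Harnack approach is heavier: it needs the exponential rate $\mu_1^{(R)}-\mu_1\lesssim e^{-cR}$ precisely to absorb the eigenvalue deficit into the angular term $\tfrac{(n-1)(n-3)}{4r^2}$ all the way out to $r=R$, plus a separate treatment of moderate $R$. On the other hand, your scheme is more robust---the same template would handle situations where no explicit full-space eigenfunction is available for direct comparison, whereas the paper's trick leans entirely on having $\psi_1$ in hand.
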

 \begin{proof}
It is enough to prove $\psi_1^{(R)}(r)<2\psi_1(r)$.
We prove by contradiction.
Suppose that there exists $r_1\in(0,R)$ such that
 $\psi_1^{(R)}(r)<2\psi_1(r)$ for $r<r_1$ and
 $\psi_1^{(R)}(r_1)=2\psi_1(r_1)$.
 We now define $\psi_0\in H_0^1(B_{R})$ as
 \[
 \psi_0(r)=
 \begin{cases}
 2\psi_1(r)-\psi_1^{(R)}(r) & \text{for } r<r_1,
 \\
 0  & \text{for } r_1<r<R.
 \end{cases}
 \]
By the monotonic dependence of the eigenvalue with respect to the domain,
it holds that $\mu_1^{(R)}>\mu_1$.
Therefore we get
 \begin{align*}
 \int_{B_R}\left( |\nabla_y\psi_0|^2-V\psi_0^2 \right)dy
 &=
 \mu_1^{(R)}\int_{B_R}\psi_0^2dy+(\mu_1-\mu_1^{(R)})\int_{B_R}2\psi_1\psi_0dy
 \\
 &<
 \mu_1^{(R)}\int_{B_R}\psi_0^2dy.
 \end{align*}
However this contradicts to characterization of $\mu_1^{(R)}$.
The proof is completed.
 \end{proof}
 \begin{lem}\label{Lem3.2}
 There exists $R_1>0$ such that if $R>R_1$
 \[
 |\psi_2^{(R)}(r)|
 \lesssim
 \left( 1+r \right)^{-(n-2)}
 \qquad\text{\rm for } r\in(0,R).
 \]
 \end{lem}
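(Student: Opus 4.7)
The natural candidate for the limit of $\psi_2^{(R)}$ as $R\to\infty$ is a normalization of the radial zero-eigenfunction of $H_y$ on $\R^n$. Set
\[
\Phi_0(y) \;:=\; \tfrac{2}{n-2}\,\Lambda_y{\sf Q}(y),
\]
so that $H_y\Phi_0 = 0$ and $\Phi_0(0)=1$. From the explicit formula for ${\sf Q}$ one checks that $\Phi_0(r)$ is smooth, has a single simple zero, and satisfies $|\Phi_0(r)|\lesssim (1+r)^{-(n-2)}$. The goal of the proof is to show that $\psi_2^{(R)}$ is a small perturbation of $\Phi_0$ uniformly on $(0,R)$, from which the claimed decay follows.

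The plan splits into two main steps. First, I would show that $\mu_2^{(R)}\to 0$ as $R\to\infty$, with a quantitative polynomial rate. To do this, one uses $\chi(|y|/R)\Phi_0$ (projected off $\psi_1^{(R)}$, whose contribution is exponentially small by Lemma~\ref{Lem3.1}) as a competitor in the min-max characterization of $\mu_2^{(R)}$. Since $\Phi_0$ is an exact kernel element of $H_y$ away from the boundary, the only contributions to the Rayleigh quotient come from the cutoff region $|y|\sim R$, yielding $|\mu_2^{(R)}|\lesssim R^{-\gamma}$ for some $\gamma>0$.

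Second, I would convert the eigenvalue equation into an integral equation by variation of parameters using the fundamental system $\{\Phi_0,\Phi_1\}$ of the homogeneous equation $H_y\psi=0$, where $\Phi_1$ is a second radial solution built from $\Phi_0$ by reduction of order. An ODE computation gives $\Phi_1(r)\sim c\,r^{-(n-2)}$ as $r\to 0$ and $\Phi_1(r) = O(1)$ as $r\to\infty$. Regularity at the origin and the normalization $\psi_2^{(R)}(0)=1$ force the representation
\[
\psi_2^{(R)}(r) \;=\; \Phi_0(r) \;+\; \mu_2^{(R)}\int_0^r K(r,s)\,\psi_2^{(R)}(s)\,s^{n-1}\,ds,
\]
where $K(r,s) = \Phi_1(r)\Phi_0(s)-\Phi_0(r)\Phi_1(s)$ (up to the Wronskian constant). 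Substituting the ansatz $|\psi_2^{(R)}(r)|\le M(1+r)^{-(n-2)}$ into this identity and using the decay of $\Phi_0$ and $\Phi_1$ bounds the integral correction by $C|\mu_2^{(R)}|\,M\,(1+r)^{-(n-2)}$. By Step~1, $C|\mu_2^{(R)}|<1/2$ for $R>R_1$, and the bootstrap closes with $M$ a universal constant.

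The main technical obstacle is the simple zero of $\Phi_0$ somewhere in $(0,\infty)$: the reduction-of-order formula for $\Phi_1$ is singular there, so a careful definition of $\Phi_1$ (for instance, restarting the integration immediately past the zero and patching) is needed to produce a globally-defined second solution with the stated decay, and the kernel $K(r,s)$ must be controlled uniformly through this point. A related subtlety is that Step~1 must give genuinely $|\mu_2^{(R)}|\to 0$ (not just boundedness) so that the bootstrap constant stays strictly below~$1$ for all $R$ beyond some $R_1$.
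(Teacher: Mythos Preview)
Your Step~2 has a genuine gap. With the shooting representation you wrote,
\[
\psi_2^{(R)}(r)=\Phi_0(r)+\frac{\mu_2^{(R)}}{W}\Bigl[\Phi_1(r)\int_0^r\Phi_0(s)\psi_2^{(R)}(s)\,s^{n-1}\,ds-\Phi_0(r)\int_0^r\Phi_1(s)\psi_2^{(R)}(s)\,s^{n-1}\,ds\Bigr],
\]
the dangerous term is $\Phi_1(r)\int_0^r\Phi_0\psi_2^{(R)} s^{n-1}ds$. You correctly note $\Phi_1(r)=O(1)$ at infinity; but then, under the bootstrap hypothesis $|\psi_2^{(R)}|\le M(1+s)^{-(n-2)}$ and $|\Phi_0|\lesssim(1+s)^{-(n-2)}$, the integral converges (for $n\ge5$) to something of size $O(M)$, so this term is $O(M)$ uniformly in $r$, \emph{not} $O(M(1+r)^{-(n-2)})$. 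The correction is therefore $C\mu_2^{(R)}M$, a constant in $r$. Since in fact $\mu_2^{(R)}\sim R^{-(n-2)}$ (the variational upper bound matches the lower bound of Lemma~\ref{Lem3.3}), at $r\sim R$ you are asking for $C_0R^{-(n-2)}+C'MR^{-(n-2)}\le MR^{-(n-2)}$, which only closes if $C'<1$ --- an uncontrolled numerical constant. Your representation has not used the Dirichlet condition at $r=R$ at all; without it there is no mechanism forcing the $\Phi_1$-component to decay.

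The paper bypasses this completely with a two-line Sturm comparison argument. Since $\mu_2^{(R)}>0=\mu_2$, one compares $\psi_2^{(R)}$ and $\psi_2=\Phi_0$ directly: after locating a point $r_1>r_0$ (just past the common zero) where $\psi_2(r_1)=\tfrac12\psi_2^{(R)}(r_1)<0$, the Sturm comparison principle forbids another crossing of $\psi_2$ and $\tfrac12\psi_2^{(R)}$ on $(r_1,R)$, giving $|\psi_2^{(R)}(r)|<2|\psi_2(r)|\lesssim(1+r)^{-(n-2)}$ there. No eigenvalue rate, no integral equation, no bootstrap. If you want to rescue your approach, you would need to incorporate the boundary condition at $R$ into the integral representation (as is done later in Lemma~\ref{Lem3.3}), splitting $\int_0^r=\int_0^R-\int_r^R$ and using $\psi_2^{(R)}(R)=0$ to eliminate the nondecaying piece.
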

 \begin{proof}
Put $\psi_2(r)=\Lambda_y{\sf Q}(r)/\Lambda_y{\sf Q}(0)$.
It is clear that $\psi_2$ gives the eigenfunction of \eqref{3.1}
associated to a zero eigenvalue and $\psi_2(0)=1$.
Let $r_0$ and $r_0^{(R)}$ be the unique zero of $\psi_2(r)$ and $\psi_2^{(R)}$
respectively.
The Sturm comparison principle implies that $\mu_2^{(R)}>0$ and $r_0^{(R)}<r_0$.
We easily see that
$\lim_{R\to\infty}\mu_2^{(R)}=0$
and
$\lim_{R\to\infty}\psi_2^{(R)}(r)=\psi_2(r)$ locally uniformly in $r\in[0,\infty)$.
Therefore
there exists $R_1>0$ such that $\psi_2(r_0+1)<\frac{1}{2}\psi_2^{(R)}(r_0+1)<0$
if $R>R_1$.
We now define $r_1>r_0$ as
 \[
 r_1
 =
 \sup\left\{ r\in(r_0,r_0+1);\ \psi_2(r)=\frac{1}{2}\psi_2^{(R)}(r) \right\}.
 \]
From this definition,
we immediately see that $\psi_2(r_1)=\frac{1}{2}\psi_2^{(R)}(r_1)$ and
 \[
 \psi_2(r)<\frac{1}{2}\psi_2^{(R)}(r)
 \quad\text{for }
 r_1<r<r_0+1.
 \]
We now suppose that there exists $r_2>r_1$ such that
$\psi_2(r_2)=\frac{1}{2}\psi_2^{(R)}(r_2)$ and
 \[
 \psi_2(r)<\frac{1}{2}\psi_2^{(R)}(r)
 \quad\text{for }
 r_1<r<r_2.
 \]
However this contradicts the Sturm comparison principle.
Therefore we obtain
 \[
 \psi_2(r)<\frac{1}{2}\psi_2^{(R)}(r)<0
 \quad\text{for }
 r_0+1<r<R.
 \]
Since $|\psi_2(r)|\lesssim(1+r)^{-(n-2)}$,
we complete the proof.
 \end{proof}
 \begin{lem}[Lemma 7.2 \cite{Cortazar}]\label{Lem3.3}
 Let $n\geq5$.
 There exists $R_2>0$ such that if $R>R_2$
 \[
 \mu_2^{(R)}
 \gtrsim
 R^{-(n-2)}.
 \]
 \end{lem}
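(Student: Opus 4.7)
The proof hinges on comparing $\psi_2^{(R)}$ with the explicit zero eigenfunction
$\psi_2(y) := \Lambda_y{\sf Q}(y)/\Lambda_y{\sf Q}(0)$ of $-H_y$ on all of $\R^n$, which
satisfies the pointwise asymptotic $\psi_2(r) \sim -c_0 r^{-(n-2)}$ as $r \to \infty$
for a computable constant $c_0 > 0$ read off from the formula for $\Lambda_y{\sf Q}$.
The starting point is Green's second identity on $B_R$: multiplying the eigenvalue
equation $-H_y\psi_2^{(R)} = \mu_2^{(R)}\psi_2^{(R)}$ by $\psi_2$, integrating over
$B_R$, and using $-H_y\psi_2 = 0$ together with $\psi_2^{(R)}|_{\partial B_R}=0$ gives,
after two integrations by parts,
\[
\mu_2^{(R)} \int_{B_R} \psi_2\,\psi_2^{(R)}\, dy
\;=\;
-\,c_n\, R^{n-1}\,\psi_2(R)\,\partial_r\psi_2^{(R)}(R),
\]
where $c_n$ denotes the surface area of the unit $(n-1)$-sphere.

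I would then evaluate both sides asymptotically. For the denominator, the bound
$|\psi_2\psi_2^{(R)}| \lesssim (1+r)^{-2(n-2)}$ coming from Lemma \ref{Lem3.2} (and the
analogous decay for $\psi_2$) is integrable on $\R^n$ precisely when $n \geq 5$;
combined with the locally uniform convergence $\psi_2^{(R)} \to \psi_2$ noted in the
proof of Lemma \ref{Lem3.2}, dominated convergence yields
$\int_{B_R}\psi_2\psi_2^{(R)}\,dy \to \int_{\R^n}\psi_2^2\,dy > 0$. For the boundary
derivative I would integrate the radial eigenvalue equation
$(r^{n-1}(\psi_2^{(R)})')' = -r^{n-1}(V+\mu_2^{(R)})\psi_2^{(R)}$ from $0$ to $R$ to
obtain
\[
R^{n-1}\partial_r\psi_2^{(R)}(R)
\;=\;
-\int_0^R r^{n-1}V\psi_2^{(R)}\,dr
\;-\;\mu_2^{(R)}\int_0^R r^{n-1}\psi_2^{(R)}\,dr.
\]
Since $V(r) \lesssim (1+r)^{-4}$ the first integrand is bounded by $r^{-3}$ at
infinity, so by the same dominated convergence the first term tends to
$-\int_0^\infty r^{n-1}V\psi_2\,dr$. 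A single integration by parts using
$\Delta\psi_2 = -V\psi_2$ evaluates this limit to $(n-2)c_0 > 0$.

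The main obstacle is the correction term $\mu_2^{(R)}\int_0^R r^{n-1}\psi_2^{(R)}\,dr$,
which is only controlled in absolute value by $O(\mu_2^{(R)} R^2)$; knowing merely that
$\mu_2^{(R)} \to 0$ is not enough to make it $o(1)$. I would resolve this by
contradiction: assume a sequence $R_k \to \infty$ along which
$\mu_2^{(R_k)} R_k^{n-2} \to 0$. Then $\mu_2^{(R_k)} R_k^2 = o(R_k^{-(n-4)}) = o(1)$
because $n \geq 5$, so the correction is negligible along this sequence and the ODE
identity forces $R_k^{n-1}\partial_r\psi_2^{(R_k)}(R_k) \to (n-2)c_0$. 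Substituting
into Green's identity gives
\[
\mu_2^{(R_k)}
\;=\;
\bigl(c_n(n-2)c_0^{\,2}\big/\textstyle\int_{\R^n}\psi_2^2\,dy + o(1)\bigr)R_k^{-(n-2)},
\]
which contradicts $\mu_2^{(R_k)} R_k^{n-2} \to 0$. Therefore $\mu_2^{(R)} \gtrsim R^{-(n-2)}$
for all sufficiently large $R$, which is exactly the claim of the lemma. (Alternatively,
the a priori upper bound $\mu_2^{(R)} \lesssim R^{-(n-2)}$ can be obtained directly by
inserting the test function $\psi_2\eta_R - \alpha_R\psi_1^{(R)}$, with a Lipschitz cutoff
$\eta_R$ vanishing on $\partial B_R$, into the Rayleigh quotient; this makes the whole
argument direct rather than by contradiction.)
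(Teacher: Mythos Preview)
Your proof is correct and takes a genuinely different route from the paper's. The paper expresses $\psi_2^{(R)}$ by variation of constants using the fundamental pair $Z_1=\Lambda_y{\sf Q}$, $Z_2=\Gamma$ of $H_yZ=0$, obtaining $\psi_2^{(R)}=\mu_2^{(R)}(A_1+A_2-A_3)$ with explicit integral expressions $A_i$; it then bounds $\|A_i\|_{L^2(B_R)}$ to get $\|\psi_2^{(R)}\|_{L^2(B_R)}\lesssim\mu_2^{(R)}(1+R^{n/2}+R^{n-2})$, and concludes because $\|\psi_2^{(R)}\|_{L^2(B_R)}\to\|\Lambda_y{\sf Q}\|_{L^2(\R^n)}>0$ when $n\geq5$. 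Your argument instead uses Green's identity to reduce everything to the single boundary quantity $R^{n-1}\psi_2(R)\,\partial_r\psi_2^{(R)}(R)$, and then identifies this via one ODE integration and the explicit tail asymptotics of $\Lambda_y{\sf Q}$. Your approach is slightly more elementary in that it never needs the second solution $\Gamma$, and it actually pins down the asymptotic constant $\mu_2^{(R)}\sim c_n(n-2)c_0^2\big/\|\psi_2\|_{L^2(\R^n)}^2\cdot R^{-(n-2)}$; the paper's approach, on the other hand, avoids the contradiction step and delivers the lower bound directly from the $L^2$ inequality.
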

 \begin{proof}
 We recall that $Z_1(r)=\Lambda_y{\sf Q}(r)$ gives a solution of $H_yZ=0$.
 Let $Z_2(r)=\Gamma(r)$ be another independent solution of $H_yZ=0$.
 Since $\psi_2^{(R)}(r)$ is a solution of \eqref{3.2} with $\mu=\mu_2^{(R)}$,
 it is expressed as
 \begin{align*}
 \psi_2^{(R)}(r)
 &=
 kZ_2(r)\int_0^r\mu_2^{(R)}\psi_2^{(R)}(r')Z_1(r')r'^{n-1}dr'
 +
 kZ_1(r)\int_r^R\mu_2^{(R)}\psi_2^{(R)}(r')Z_2(r')r'^{n-1}dr'
 \\
 &\qquad
 -
 k\frac{Z_2(R)}{Z_1(R)}Z_1(r)
 \int_0^R\mu_2^{(R)}\psi_2^{(R)}(r')Z_1(r')r'^{n-1}dr'
 \\
 &=:
 \mu_2^{(R)}(A_1+A_2-A_3),
 \end{align*}
 where $k$ is a constant depending on $Z_1(r)$ and $Z_2(r)$.
 Since $|Z_2(r)|\lesssim1+r^{-(n-2)}$,
 we easily see that
 \begin{align*}
 \|A_1\|_{L^2(B_R)}
 &\lesssim
 \|\psi_2^{(R)}\|_{L^\infty(B_1)}
 \|Z_1\|_{L^\infty(B_1)}
 +
 R^\frac{n}{2}
 \|\psi_2^{(R)}\|_{L^2(B_R)}
 \|Z_1\|_{L^2(B_R)},
 \\
 \|A_2\|_{L^2(B_R)}
 &\lesssim
 \|Z_1\|_{L^2(B_R)}
 \|\psi_2^{(R)}\|_{L^\infty(B_1)}
 +
 R^\frac{n}{2}
 \|Z_1\|_{L^2(B_R)}
 \|\psi_2^{(R)}\|_{L^2(B_R)},
 \\
 \|A_3\|_{L^2(B_R)}
 &\lesssim
 \left| \frac{Z_2(R)}{Z_1(R)} \right|
 \|Z_1\|_{L^2(B_R)}^2
 \|\psi_2^{(R)}\|_{L^2(B_R)}.
 \end{align*}
Since $\lim_{R\to\infty}\psi_2^{(R)}(r)=\psi_2(r)$ uniformly in $r\in[0,1]$
(see Lemma \ref{Lem3.2}),
it follows that $\|\psi_2^{(R)}\|_{L^\infty(B_1)}\lesssim1$.
Therefore when $n\geq5$,
we get from Lemma \ref{Lem3.2} that
 \begin{align*}
 \|\psi_2^{(R)}\|_{L^2(B_R)}
 &\lesssim
 \mu_2^{(R)}
 \left(
 1+R^\frac{n}{2}+R^{n-2}
 \right).
 \end{align*}
Since
$\lim_{R\to\infty}\|\psi_2^{(R)}\|_{L^2(B_R)}=\|\Lambda_y{\sf Q}\|_{L_2(\R^n)}$
if $n\geq5$ (see Lemma \ref{Lem3.2}),
we complete the proof.
 \end{proof}

\subsection{Behavior of the Laplace equation wiht a purterbation term}
\label{Sec3.3}
Consider a radial solution of
 \[
 \Delta_yp+(1-\chi_M)V(y)p=0 \qquad \text{in } \R^n,
 \]
 where $\chi_M(y)=\chi(\frac{|y|}{M})$.
 Let $p_M(r)$ be a radial solution of this problem satisfying
 $p_M(r)=1$ for $r<M$.
 \begin{lem}[see proof of Lemma 7.3 \cite{Cortazar}]\label{Lem3.4}
 There exist $k\in(0,1)$ and $M_1>0$ such that if $M>M_1$
 \[
 k<p_M(r)\leq1
 \qquad\text{\rm for}\ r\in(0,\infty).
 \]
 \end{lem}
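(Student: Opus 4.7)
The plan is to exploit that $p_M\equiv 1$ on $\{r\le M\}$, so continuity of $p_M$ and its derivative across $r=M$ reduces the lemma to the radial IVP
\[
 p_M''+\tfrac{n-1}{r}p_M'+(1-\chi_M(r))V(r)p_M=0,\qquad p_M(M)=1,\ p_M'(M)=0,
\]
on $r>M$. The key quantitative input is $V(r)=p\bigl(1+r^2/(n(n-2))\bigr)^{-2}\lesssim (1+r)^{-4}$, so on the outer region $(1-\chi_M)V$ is a small integrable perturbation of $0$ when $M$ is large, and I expect $p_M$ to stay very close to the unperturbed solution $1$.

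Setting $\phi(r):=1-p_M(r)$ on $r>M$ turns the IVP into the divergence-form equation
\[
 (r^{n-1}\phi')'=r^{n-1}(1-\chi_M(r))V(r)(1-\phi),\qquad \phi(M)=\phi'(M)=0,
\]
and I would run a bootstrap based on the hypothesis $\phi\le 1/2$. On any interval $[M,R]$ where this hypothesis holds the right-hand side is nonnegative, so $r^{n-1}\phi'$ is nondecreasing; since it starts at $0$ this forces $\phi'\ge 0$, hence $\phi\ge 0$. Using $1-\phi\le 1$ together with the pointwise decay of $V$,
\[
 r^{n-1}\phi'(r)\le \int_M^r s^{n-1}V(s)\,ds\lesssim \int_M^r s^{n-5}\,ds,
\]
and one more integration in $r$ (via Fubini or an explicit antiderivative) yields $\phi(r)\lesssim M^{-2}$ uniformly on $[M,R]$.

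Choose $M_1$ large enough that the implicit constant times $M^{-2}$ is strictly less than $1/2$ for every $M>M_1$. Then on any interval on which the bootstrap hypothesis holds we in fact have $\phi\le CM^{-2}<1/2$, so by continuation the bound extends to all of $(M,\infty)$. Combined with $p_M\equiv 1$ on $r\le M$ this yields $1-CM^{-2}\le p_M(r)\le 1$, so the conclusion holds with, for instance, $k=1/2$. The place where the most care is required is in closing this continuation globally, i.e., making sure the $M^{-2}$ smallness genuinely beats the bootstrap threshold uniformly in $r$; this is exactly where the pointwise decay $V\lesssim r^{-4}$ is essential. Once the bootstrap is in force, the sign $\phi\ge 0$ (equivalently $p_M\le 1$) is essentially automatic from the divergence form of the equation and the vanishing initial data.
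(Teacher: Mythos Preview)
Your argument is correct and closes cleanly; the bootstrap is well set up, and the integration
\[
 \phi(r)\le\int_M^r t^{-(n-1)}\int_M^t s^{n-1}V(s)\,ds\,dt
 \le\frac{1}{n-2}\int_M^\infty sV(s)\,ds\lesssim M^{-2}
\]
indeed gives a uniform bound strictly below the threshold $1/2$ once $M$ is large, so the continuation goes through on all of $(M,\infty)$.

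Your route is genuinely different from the paper's. The paper does not run a perturbation/bootstrap argument at all: it first uses a Sturm comparison with the explicit function $\bar p(r)=M_1^{n-3}/r^{n-3}$, which solves $\Delta\bar p-\frac{n-3}{r^2}\bar p=0$ and is dominated by $p_{M_1}$ because $V(r)<\frac{n-3}{r^2}$ for $r>M_1$. This yields only $p_{M_1}>0$, not a quantitative lower bound. To get a strictly positive infimum the paper then invokes the structure of the kernel of $H_y$: for $r>2M_1$ one has $p_{M_1}=c_1Z_1+c_2Z_2$ with $Z_2(r)\to\alpha\neq0$, and positivity forces $c_2\alpha>0$, hence $k:=\inf p_{M_1}>0$. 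The bound for general $M>M_1$ then follows from another Sturm comparison $p_M\ge p_{M_1}$, and $p_M\le1$ is deduced exactly as you do (from positivity of $p_M$ and the sign of $(r^{n-1}p_M')'$). Your approach is more elementary and self-contained, since it avoids any reference to the solutions $Z_1,Z_2$ of $H_yZ=0$, and it yields the sharper quantitative statement $1-p_M(r)\lesssim M^{-2}$. The paper's approach, by contrast, gives a single $k$ independent of $M$ directly and would survive under weaker decay of $V$ (only $V(r)<\frac{n-3}{r^2}$ for large $r$ is needed), whereas your integration genuinely uses $V\lesssim r^{-4}$.
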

\begin{proof}
Since $V(r)\sim\frac{1}{r^4}$ for $r>1$,
we can take $M_1>0$ such that
 \[
 V(r)<\frac{n-3}{r^2}
 \qquad\text{for } r>M_1.
 \]
Let $\bar p(r)=\frac{M_1^{n-3}}{r^{n-3}}$.
It satisfies
 \[
 \Delta_y\bar p-\frac{n-3}{r^2}\bar p=0.
 \]
Therefore
by the Sturm comparison principle,
it holds that
 \begin{equation}\label{3.3}
 p_{M_1}(r)
 >
 \bar p(r)
 =
 \frac{M_1^{n-3}}{r^{n-3}}
 \qquad\text{for } r>M_1.
 \end{equation}
Let $Z_1(r)$ and $Z_2(r)$ be given in the proof of Lemma \ref{Lem3.3}.
Since $p_{M_1}(r)$ satisfies $H_yp_{M_1}=0$ for $r>2M_1$,
there exist two constants $c_1,c_2\in\R$ such that
$p_{M_1}(r)=c_1Z_1(r)+c_2Z_2(r)$ for $r>2M_1$.
We recall that $\lim_{r\to\infty}Z_2(r)=\alpha\not=0$.
Since $p_{M_1}(r)$ satisfies \eqref{3.3},
$c_2\alpha$ must be positive.
Therefore
it holds that $k:=\inf_{r>0}p_{M_1}(r)>0$.
For the case $M>M_1$,
by the Sturm comparison principle,
we conclude $p_M(r)\geq p_{M_1}(r)\geq k$ for $r>0$.
Since $p_M(r)$ is positive,
we easily check that $\pa_rp_M(r)\leq0$,
which implies $p_M(r)<1$.
The proof is completed.
\end{proof}

\subsection{The Schauder estimate for parabolic equations}
\label{Sec3.4}
Put $Q=B_1\times(0,1)$.
For $\alpha\in(0,1)$,
we define the H\"older norm.
 \[
 \|u\|_{C^{\alpha}(Q)}
 :=
 \|u\|_{L^\infty(Q)}
 +
 [u]_{C^\alpha(Q)},
 \qquad
 [u]_{C^\alpha(Q)}
 :=
 \sup_{(x_1,t_1),(x_2,t_2)\in Q}
 \frac{|u(x_1,t_1)-u(x_2,t_2)|}{|x_1-x_2|^\alpha+|t_1-t_2|^\frac{\alpha}{2}}.
 \]
We recall the local H\"oder estimate for parabolic equations.
 \begin{lem}[Theorem 4.8 p. 56 \cite{Lieberman}]\label{Lem3.5}
 Let $\alpha\in(0,1)$ and $V(x,t)\in L^\infty(Q)$ satisfy $\|V\|_{L^\infty(Q)}<M$.
 There exists $c_M>0$ such that
 if $u(x,t),\nabla_xu(x,t)\in C^\alpha(Q)$ and $u(x,t)$ satisfies
 \[
 u_t=\Delta_xu+V(x,t)u+f(x,t) \qquad \text{\rm in}\ B_1\times(0,1),
 \]
 then
 \[
 \|u\|_{C^{\alpha}(B_\frac{1}{2}\times(\frac{1}{2},1))}
 +
 \|\nabla_x u\|_{C^{\alpha}(B_\frac{1}{2}\times(\frac{1}{2},1))}
 <
 c_M\left( \|u\|_{L^\infty(Q)}+\|f\|_{L^\infty(Q)} \right).
 \]
 \end{lem}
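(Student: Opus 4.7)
The result is the standard interior parabolic $C^{1,\alpha}$ regularity estimate for an equation with a bounded zeroth-order coefficient $V$ and bounded source $f$. My plan is to absorb the $Vu$ term into a new bounded right-hand side, cut $u$ off to obtain a compactly supported solution of a heat equation with $L^\infty$ forcing on all of $\R^n$, and then invoke heat-kernel estimates.

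Since $\|V\|_{L^\infty(Q)}\le M$, the function $g:=Vu+f$ belongs to $L^\infty(Q)$ with $\|g\|_\infty\le M\|u\|_\infty+\|f\|_\infty$, and $u$ solves $u_t-\Delta_x u=g$. Fix a smooth cutoff $\eta$ with $\eta\equiv 1$ on $B_{1/2}\times(1/2,1)$ and $\mathrm{supp}\,\eta\subset B_{3/4}\times(1/4,1)$. Then $w:=\eta u$, extended by zero, satisfies
\[
w_t-\Delta w = F := \eta g + (\eta_t-\Delta\eta)u - 2\nabla\eta\cdot\nabla u
\quad\text{in }\R^n\times(0,1),
\]
with vanishing initial data. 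Duhamel's formula then yields
\[
w(x,t)=\int_0^t\bigl(\Gamma_{t-s}*F(\cdot,s)\bigr)(x)\,ds,
\]
where $\Gamma$ is the Euclidean heat kernel. Using the bound $\int|\nabla_x\Gamma(\cdot,\tau)|\,dx\lesssim\tau^{-1/2}$ together with the classical fact that convolution against $\Gamma$ with an $L^\infty$ forcing produces a $C^{1,\alpha}$ output for every $\alpha\in(0,1)$, with a quantitative bound of the form $C(\|F\|_\infty+\|u\|_\infty)$, I would obtain the desired estimate for $w$, and hence for $u$ on the smaller cylinder.

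The main obstacle is that the cutoff term contributes $-2\nabla\eta\cdot\nabla u$ to $F$, whose $L^\infty$ norm is only qualitatively controlled by the hypothesis $\nabla_x u\in C^\alpha(Q)$. To close the bound I would iterate on a nested sequence $Q_k:=B_{r_k}\times(t_k,1)$ with $r_k\searrow 1/2$ and $t_k\searrow 1/2$. On each $Q_k$ the previous step produces a $C^{1,\alpha}$ bound on $Q_{k+1}$ controlled by $\|u\|_{L^\infty(Q_k)}$, $\|f\|_{L^\infty(Q)}$ and $\|\nabla u\|_{L^\infty(Q_k)}$, while an interpolation inequality of the form
\[
\|\nabla u\|_{L^\infty(Q_k)}\le\varepsilon\,[\nabla u]_{C^\alpha(Q_k)}+C(\varepsilon)\|u\|_{L^\infty(Q_k)}
\]
lets me absorb the gradient term into the left-hand side after finitely many iterations. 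This is essentially the classical Campanato/iteration scheme underlying Lieberman's Theorem 4.8.
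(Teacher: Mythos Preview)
The paper gives no proof of this lemma: it is simply quoted as Theorem~4.8 from Lieberman's book, with no argument supplied. So there is nothing in the paper to compare your proposal against.

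Your sketch follows a standard route to interior parabolic $C^{1,\alpha}$ regularity and is correct in outline: absorb $Vu$ into a bounded right-hand side, localize with a cutoff, use Duhamel and heat-kernel bounds to pass from $L^\infty$ forcing to $C^{1,\alpha}$, and then iterate on nested cylinders to remove the dependence on $\|\nabla u\|_{L^\infty}$. The one place that needs more care than you indicate is the closing step. When you shrink $Q_k$ to $Q_{k+1}$, the constant coming from the cutoff (through $\nabla\eta$, $\Delta\eta$, $\eta_t$) scales like a negative power of the gap $\mathrm{dist}(Q_{k+1},\partial Q_k)$, so a bound of the form ``finitely many iterations plus interpolation'' does not close by itself: the product of constants diverges. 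The standard remedy, which is exactly what underlies Lieberman's proof, is a weighted-norm absorption lemma: set, for instance, $\phi(\rho)=(\rho-\tfrac12)^{1+\alpha}\bigl(\|\nabla u\|_{L^\infty(Q_\rho)}+[\nabla u]_{C^\alpha(Q_\rho)}\bigr)$, derive an inequality $\phi(\rho)\le\tfrac12\phi(\rho')+C(\|u\|_{L^\infty(Q)}+\|f\|_{L^\infty(Q)})$ for $\tfrac12<\rho<\rho'<1$, and iterate geometrically. With that device in place your argument goes through.
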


\subsection{Local behavior of the heat equation}
\label{Sec3.5}
Consider the heat equation
 \begin{equation}\label{3.4}
 u_t=\Delta u \qquad\text{in } \R^n\times(0,\infty).
 \end{equation}
To describe the local behavior of solutions,
we use self-similar variables.
 \[
 \theta(z,\tau)=u(x,t), \qquad z=\frac{x}{\sqrt{1-t}},\quad 1-t=e^{-\tau}.
 \]
This function $\theta(z,\tau)$ solves
 \begin{equation}\label{3.5}
 \theta_\tau=A_z\theta
 \qquad\text{in } \R^n\times(0,\infty),
 \end{equation}
where $A_z=\Delta_z-\frac{z}{2}\cdot\nabla_z$.
We define the associated wighted $L^2$ space by
 \begin{align*}
 L_\rho^2(\R^n)
 :=
 \{f\in L_\text{loc}^2(\R^n);\ \|f\|_\rho<\infty\},
 \qquad
 \|f\|_\rho^2=\int_{\R^n}f(z)^2\rho(z)dz,
 \qquad
 \rho(z)=e^{-\frac{|z|^2}{4}}.
 \end{align*}
The inner product is denoted by
 \[
 (f_1,f_2)_\rho==\int_{\R^n}f_1(z)f_2(z)\rho(z)dz.
 \]
Consider the eigenvalue problem
 \[
 -\Delta e_i+\frac{z}{2}\cdot e_i=\lambda_le_i
 \qquad\text{in } L_{\rho,\text{rad}}^2(\R^n).
 \]
It is known that
 \begin{itemize}
 \item $\lambda_i=i$ \ ($i=0,1,2,\cdots$) and
 \item $e_i(z)\in L_{\rho,\text{rad}}^2(\R^n)$ is the $2l$th-degree polynomial.
 \end{itemize}
We normalize the eigenfunction $e_i(z)$ as $e_i(0)=1$,
which implies
 \[
 e_i(z)=1+a_1|z|^2+a_2|z|^4+\cdots+a_l|z|^{2l}.
 \]
The function
 \begin{equation}\label{3.6}
 \Theta_i(x,t)=A(T-t)^ie_i(z),
 \qquad z=\frac{x}{\sqrt{T-t}}
 \qquad (i=0,1,2,\cdots)
 \end{equation}
gives a solution of \eqref{3.4}.
This function plays a crucial role in our argument.
The constant $A$ is chosen to be $A=-1$ later.
We denote by $e^{A_z\tau}\theta_0$ a solution of \eqref{3.5} with the initial date
$\theta_0$ for $\tau=0$.
This is expressed by
 \[
 e^{A_z\tau}\theta_0
 =
 \frac{c_n}{(1-e^{-\tau})^\frac{n}{2}}
 \int_{\R^n}
 e^\frac{|e^{-\frac{\tau}{2}}z-\xi|^2}{4(1-e^{-\tau})}
 \theta_0(\xi)
 d\xi.
 \]
By using this formula,
we can obtain the following parabolic estimate for \eqref{3.5}
(see proof of Lemma 2.2 in \cite{Harada2}).
 \begin{lem}\label{Lem3.6}
 There exists $C>0$ such that
 \[
 |(e^{A_z\tau}\theta_0)(z)|
 <
 C
 \frac{ e^\frac{e^{-\tau}|z|^2}{4(1+e^{-\tau})}}{(1-e^{-\tau})^{\frac{n}{4}}}
 \|\theta_0\|_\rho
 \qquad
 \text{\rm for}\ (z,\tau)\in\R^n\times(0,\infty).
 \]
 \end{lem}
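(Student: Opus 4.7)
The plan is to start from the explicit Mehler-type representation
\[
(e^{A_z\tau}\theta_0)(z) = \frac{c_n}{(1-e^{-\tau})^{n/2}} \int_{\R^n} \exp\!\left(-\frac{|e^{-\tau/2}z-\xi|^2}{4(1-e^{-\tau})}\right) \theta_0(\xi)\, d\xi
\]
(the sign in the exponent of the displayed formula is a typo) and apply the Cauchy--Schwarz inequality against the weight $\rho(\xi)=e^{-|\xi|^2/4}$, writing $\theta_0(\xi) = (\theta_0(\xi)\rho(\xi)^{1/2})\cdot \rho(\xi)^{-1/2}$. This reduces the proof to controlling the Gaussian integral
\[
I(z,\tau) := \int_{\R^n} \exp\!\left(-\frac{|e^{-\tau/2}z-\xi|^2}{2(1-e^{-\tau})} + \frac{|\xi|^2}{4}\right) d\xi,
\]
from which the pointwise bound on $(e^{A_z\tau}\theta_0)(z)$ follows by multiplication with $c_n/(1-e^{-\tau})^{n/2}$ and $\|\theta_0\|_\rho$.

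The main computation is to evaluate $I(z,\tau)$ by completing the square in $\xi$. Setting $a=1-e^{-\tau}$, the coefficient of $|\xi|^2$ in the exponent becomes $-\frac{1}{2a}+\frac{1}{4} = -\frac{1+e^{-\tau}}{4a}$, which is strictly negative so the integral converges, and the linear term in $\xi$ is $\frac{e^{-\tau/2}}{a}z\cdot\xi$. A direct completion of the square produces a Gaussian whose integral equals $C_n\bigl(a/(1+e^{-\tau})\bigr)^{n/2}$ and yields a residual quadratic term in $z$ equal to
\[
-\frac{e^{-\tau}|z|^2}{2a} + \frac{e^{-\tau}|z|^2}{a(1+e^{-\tau})} = \frac{e^{-\tau}|z|^2}{2(1+e^{-\tau})}.
\]
Taking the square root in the Cauchy--Schwarz step, the factor $(1-e^{-\tau})^{n/4}$ appears in the numerator and cancels against one power of $(1-e^{-\tau})^{n/2}$ from the kernel's normalization, leaving the claimed $(1-e^{-\tau})^{-n/4}$ denominator. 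The exponential residue becomes $\exp\!\bigl(e^{-\tau}|z|^2/(4(1+e^{-\tau}))\bigr)$ after the square root, exactly as stated.

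None of the steps present a serious obstacle; the main care is in tracking constants through the completion of the square, in particular in checking that the coefficient $\frac{1}{2a}-\frac14 = \frac{1+e^{-\tau}}{4a}$ remains bounded below by a positive quantity for all $\tau>0$ so that the Gaussian integral is finite and the factor $(a/(1+e^{-\tau}))^{n/2}$ is uniformly comparable to $a^{n/2}=(1-e^{-\tau})^{n/2}$. This latter comparison, together with $1+e^{-\tau}\ge 1$, absorbs all remaining dependence on $\tau$ into the universal constant $C$, completing the proof.
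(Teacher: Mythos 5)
Your proposal is correct and follows the natural route: represent $e^{A_z\tau}$ via the Mehler kernel, peel off $\rho^{1/2}$ by Cauchy--Schwarz against the weighted $L^2$ norm, and complete the square in $\xi$. The arithmetic checks out: the $\xi^2$-coefficient $-\tfrac{1}{2a}+\tfrac14=-\tfrac{1+e^{-\tau}}{4a}$ with $a=1-e^{-\tau}$ is negative, the residual quadratic in $z$ reduces to $\tfrac{e^{-\tau}|z|^2}{2(1+e^{-\tau})}$, the Gaussian integral contributes $\bigl(4\pi a/(1+e^{-\tau})\bigr)^{n/2}$, and after the square root the $a^{n/4}$ cancels against the prefactor $a^{-n/2}$ to leave $(1-e^{-\tau})^{-n/4}$; the leftover $(1+e^{-\tau})^{-n/4}\in[2^{-n/4},1]$ is absorbed into $C$. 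You also rightly flag the missing minus sign in the kernel as displayed in the paper. The paper gives no proof of its own here --- it defers to Lemma 2.2 of the author's earlier paper (Harada, Calc.\ Var.\ 2017) --- but this Cauchy--Schwarz-plus-completion-of-the-square argument is the standard one and is what that reference does, so your proof is essentially the intended one rather than a genuinely different route.
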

 We prepare another type of parabolic estimates given in \cite{Harada}.
 \begin{lem}[Lemma 2.2 in \cite{Harada})]\label{Lem3.7}
 For any $l\in\N$,
 there exists $C_l>0$ such that if $\theta_0=|z|^{2l}$
 \[
 |(e^{A_z\tau}\theta_0)(z)|
 <
 C_l\left( 1+e^{-l\tau}|z|^{2l} \right)
 \qquad
 \text{\rm for}\ (z,\tau)\in\R^n\times(0,\infty).
 \]
 \end{lem}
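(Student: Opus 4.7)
The plan is to exploit the fact that the drift--diffusion operator $A_z=\Delta_z-\frac{z}{2}\cdot\nabla_z$ preserves the finite dimensional space $\mathcal{P}_l$ of radial polynomials of degree $\leq 2l$, since $\Delta_z$ lowers the degree by two while $z\cdot\nabla_z$ is degree-preserving. By the spectral description recalled in Section \ref{Sec3.5}, the radial eigenfunctions $e_0,e_1,\ldots,e_l$, each being a polynomial of exact degree $2i$, are linearly independent and therefore span $\mathcal{P}_l$. In particular, there exist constants $c_0,\ldots,c_l$ with
\[
|z|^{2l}=\sum_{i=0}^{l}c_i\,e_i(z).
\]

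Since $A_ze_i=-i\,e_i$, the semigroup acts diagonally on this expansion, giving the explicit formula
\[
(e^{A_z\tau}|z|^{2l})(z)=\sum_{i=0}^{l}c_i\,e^{-i\tau}\,e_i(z).
\]
Because each $e_i$ is a polynomial of degree $2i$, we have $|e_i(z)|\leq C_i(1+|z|^{2i})$, and hence
\[
|(e^{A_z\tau}|z|^{2l})(z)|\leq C\sum_{i=0}^{l}e^{-i\tau}\bigl(1+|z|^{2i}\bigr).
\]

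The only real issue is converting this mixed sum into the single bound $C_l(1+e^{-l\tau}|z|^{2l})$, and I would handle it by a case split on $|z|$ versus $e^{\tau/2}$. In the inner regime $|z|\leq e^{\tau/2}$ we have $e^{-i\tau}|z|^{2i}=(e^{-\tau}|z|^2)^i\leq 1$ for every $0\leq i\leq l$, so the sum is $O(1)$, which is absorbed by the constant term on the right-hand side. In the outer regime $|z|>e^{\tau/2}$ the dominant contribution comes from $i=l$, and the intermediate terms are controlled by the interpolation identity
\[
e^{-i\tau}|z|^{2i}=e^{-l\tau}|z|^{2l}\cdot\bigl(e^{\tau}|z|^{-2}\bigr)^{l-i}\leq e^{-l\tau}|z|^{2l},
\]
since $e^{\tau}|z|^{-2}\leq 1$ and $l-i\geq 0$. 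Adding the two regimes yields the claimed estimate with a constant $C_l$ depending only on $l$, $n$ and the coefficients $c_i$.

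The main (mild) obstacle is the interpolation step above: one must check that no intermediate polynomial degree $2i$ paired with the slower decay $e^{-i\tau}$ can beat the envelope $1+e^{-l\tau}|z|^{2l}$, and this is precisely what the case split on $|z|\lessgtr e^{\tau/2}$ guarantees. Everything else is bookkeeping on finite dimensional linear algebra and elementary pointwise bounds on polynomial eigenfunctions.
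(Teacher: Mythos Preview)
Your argument is correct. The paper does not supply its own proof of this lemma; it merely quotes the result from \cite{Harada} (Lemma~2.2 there), so there is no proof in the text to compare against. Your approach---expanding $|z|^{2l}$ in the basis $e_0,\dots,e_l$ of the $A_z$-invariant space of radial polynomials of degree $\le 2l$, letting the semigroup act diagonally, and then interpolating via the case split $|z|\lessgtr e^{\tau/2}$---is exactly the natural elementary proof, and every step is justified. The only point that could be expanded for completeness is the identification of the finite-dimensional ODE solution $\sum_i c_i e^{-i\tau}e_i(z)$ with $(e^{A_z\tau}|z|^{2l})(z)$; this follows from uniqueness for the Ornstein--Uhlenbeck semigroup among solutions of at most polynomial growth, which is standard.
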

We next consider the nonhomogeneous heat equation.
 \begin{equation}\label{3.7}
 \begin{cases}
 \dis
 \Phi_\tau
 =
 A_z\Phi+\frac{e^{-\tau}}{{\lambda(\tau)}^2}\frac{1}{1+|y|^{2+a}}
 & \text{in } \R^n\times(\tau_1,\infty),
 \\
 \Phi=0
 & \text{for } \tau=\tau_1,
 \end{cases}
 \qquad y=\frac{e^{-\frac{\tau}{2}}}{\lambda(\tau)}z.
 \end{equation}
 \begin{lem}[Lemma 4.2 p8 \cite{delPino3}]\label{Lem3.8}
 Let $a>0$, $\gamma>\frac{1}{2}$ and $T_1=e^{-\tau_1}$.
 If $\lambda(\tau)$ satisfies
 \[
 k_1e^{-\gamma\tau}< \lambda <k_2e^{-\gamma\tau}
 \qquad\text{\rm and}\qquad
 \left|
 \frac{d\lambda}{d\tau} \right|<k_3\lambda,
 \]
 there exists $C>0$ depending on $a,\gamma,k_1,k_2,k_3$ such that
 a solution $\Phi(z,\tau)$ of {\rm\eqref{3.7}} satisfies
 \[
 |\Phi(z,\tau)|
 <
 C\left( \frac{1}{1+|y|^a}+T_1^{(\gamma-\frac{1}{2})a} \right),
 \qquad
 y=\frac{e^{-\frac{\tau}{2}}}{\lambda}z.
 \]
 \end{lem}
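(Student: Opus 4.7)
The plan is to apply the parabolic comparison principle using an explicit supersolution of the form
\[
\bar\Phi(z,\tau) = \frac{A_1}{1+|y|^a} + A_2\,T_1^{(\gamma-\frac{1}{2})a},
\]
with constants $A_1, A_2 > 0$ depending only on $a, \gamma, k_1, k_2, k_3$. Since $\Phi(z,\tau_1) = 0 \leq \bar\Phi$, once we verify the differential inequality $(\partial_\tau - A_z)\bar\Phi \geq \frac{e^{-\tau}}{\lambda^2(1+|y|^{2+a})}$ on $\R^n\times(\tau_1,\infty)$, the maximum principle (localized on $|z|\leq R$ with growth at infinity controlled via Lemma \ref{Lem3.6}, then $R\to\infty$) yields $|\Phi| \leq \bar\Phi$.

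The first computation is to rewrite the operator on a radial function $\phi = \phi(|y|)$ viewed as a function of $(z,\tau)$. Using $y = e^{-\tau/2}z/\lambda(\tau)$, a direct chain-rule manipulation gives
\[
(\partial_\tau - A_z)\phi(y) = -\frac{1}{\lambda}\frac{d\lambda}{d\tau}\,y\cdot\nabla_y\phi - \frac{e^{-\tau}}{\lambda^2}\Delta_y\phi,
\]
in which the coefficient $-\frac{1}{\lambda}\frac{d\lambda}{d\tau}$ is, by the hypotheses, bounded above and below by positive constants comparable to $\gamma$. For the principal part $\phi = (1+|y|^a)^{-1}$ the two terms split the work by scale: when $|y|\lesssim 1$ the Laplacian term $-\frac{e^{-\tau}}{\lambda^2}\Delta_y\phi$ is positive of size $\frac{e^{-\tau}}{\lambda^2}$ and, with $A_1$ large, dominates the forcing $\frac{e^{-\tau}}{\lambda^2(1+|y|^{2+a})}$; when $1\lesssim|y|$ the drift $-\frac{1}{\lambda}\frac{d\lambda}{d\tau}y\cdot\nabla_y\phi$ is positive of size $\gamma a/(1+|y|^a)$, and it beats the forcing as soon as the ratio $\frac{e^{-\tau}/\lambda^2}{1+|y|^2}$ is controlled, i.e.\ for $|y|\gtrsim e^{(\gamma-\frac{1}{2})\tau}$.

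The additive constant $A_2\,T_1^{(\gamma-\frac{1}{2})a}$ is designed to handle the transition near the scale $|y|\sim e^{(\gamma-\frac{1}{2})\tau}$, the boundary of the self-similar region $|x|\sim\sqrt{T-t}$: here $(1+|y|^a)^{-1}$ drops below $T_1^{(\gamma-\frac{1}{2})a}$, and the constant piece supplies the residual room needed to absorb the forcing uniformly in $\tau\geq\tau_1$. The main obstacle will be closing the supersolution inequality in this intermediate regime, where the sign of $\Delta_y\phi$ is sensitive to the relation between $a$ and $n-2$; if a single exponent does not work globally, one can replace $(1+|y|^a)^{-1}$ by a smoothed version such as $(1+|y|^2)^{-a/2}$, or patch two barriers matched at $|y|\sim 1$ so that the positive contributions of the drift and the Laplacian jointly majorize the forcing throughout. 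Choosing $A_1, A_2$ successively large then completes the construction.
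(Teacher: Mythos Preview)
Your barrier fails because of a sign error in the drift term. For any radially decreasing profile $\phi(y)=(1+|y|^a)^{-1}$ one has $y\cdot\nabla_y\phi=r\phi'(r)<0$; combined with $-\lambda^{-1}\tfrac{d\lambda}{d\tau}\approx\gamma>0$ (the regime you invoke), the drift contribution is
\[
-\frac{1}{\lambda}\frac{d\lambda}{d\tau}\,y\cdot\nabla_y\phi
\;\approx\;\gamma\cdot r\phi'(r)
\;\approx\;-\frac{\gamma a}{1+|y|^a}\;<\;0,
\]
not positive. Equivalently, since $|y|=e^{-\tau/2}|z|/\lambda\sim e^{(\gamma-\frac12)\tau}|z|\to\infty$ at fixed $z$, the function $\phi(y)$ is \emph{decreasing} in $\tau$ and is a subsolution of the homogeneous problem, not a supersolution. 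For $|y|\gtrsim e^{(\gamma-\frac12)\tau}$ (that is, $|z|\gtrsim1$) this negative drift term, of order $|y|^{-a}$, dominates the Laplacian contribution $-\tfrac{e^{-\tau}}{\lambda^2}\Delta_y\phi=O\!\big(\tfrac{e^{-\tau}}{\lambda^2}|y|^{-a-2}\big)$, so $(\partial_\tau-A_z)\bar\Phi<0$ there and the supersolution inequality cannot hold. The additive constant $A_2T_1^{(\gamma-\frac12)a}$ is annihilated by $\partial_\tau-A_z$ and therefore cannot absorb anything; replacing $(1+|y|^a)^{-1}$ by $(1+|y|^2)^{-a/2}$ does not help either, since the sign issue depends only on $\phi$ being radially decreasing. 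A secondary point: the stated hypotheses give only $|\lambda'/\lambda|<k_3$, not a positive lower bound for $-\lambda'/\lambda$, so the coefficient you rely on need not even have a definite sign.

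The paper sidesteps all of this. It undoes the self-similar change of variables, writing $\psi(x,t)=\Phi(e^{\tau/2}x,\tau)$ with $T_1-t=e^{-\tau}$, so that $\psi$ solves the plain heat equation
\[
\psi_t=\Delta_x\psi+\frac{1}{\lambda^2}\frac{1}{1+|y|^{2+a}},\qquad y=\frac{x}{\lambda},
\]
on the \emph{finite} interval $(0,T_1)$ with zero initial data, and then quotes Lemma~2.2 of \cite{delPino3} for the pointwise bound. The finiteness of the time interval in the $(x,t)$ picture is what allows a direct Duhamel/heat-kernel estimate to close; a global-in-$\tau$ barrier of the shape you propose does not.
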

\begin{proof}
We change variable.
 \[
 \psi(x,t)=\Phi(e^\frac{\tau}{2}x,\tau),
 \qquad T_1-t=-e^{-\tau}.
 \]
The function $\psi(x,t)$ solves
 \[
 \begin{cases}
 \dis
 \psi_t
 =
 \Delta_x\psi
 +
 \frac{1}{\lambda^2}\frac{1}{1+|y|^{2+a}}
 & \text{in } \R^n\times(0,T_1),
 \\
 \psi=0
 & \text{for } t=0,
 \end{cases}
 \qquad y=\frac{x}{\lambda}.
 \]
From Lemma 2.2 in \cite{delPino3},
there exists $C>0$ such that
 \[
 |\psi(x,t)|
 <
 C\left( \frac{1}{1+|y|^a}+T_1^{(\gamma-\frac{1}{2})a} \right).
 \]
The proof is completed.
\end{proof}

\section{Formal derivation of blowup speed}
\label{Sec4}
We fix $l\in\N$.
Throughout this paper,
we write (see \eqref{3.6})
 \[
 \Theta(x,t)=\Theta_l(x,t)=A(T-t)^le_l(z),
 \qquad
 z=\frac{x}{\sqrt{T-t}}.
 \]
We look for solutions of the form
 \begin{equation}\label{4.1}
 u(x,t)={\sf Q}_{\lambda(t)}(x)+\Theta(x,t)+v(x,t),
 \end{equation}
where $v(x,t)$ is a remainder term.
A function $v(x,t)$ satisfies
 \[
 v_t
 =
 \Delta v
 +
 f'({\sf Q}_\lambda)(\Theta+v)
 +
 {\sf N}(v)
 +
 \frac{\lambda_t}{\lambda^\frac{n}{2}}\Lambda_y{\sf Q}(y),
 \qquad
 y=\frac{x}{\lambda}.
 \]
The nonlinear term ${\sf N}(v)$ is defined by
\[
 {\sf N}(v)
 =
 f({\sf Q}_\lambda+\Theta+v)-f({\sf Q}_\lambda)-f'({\sf Q}_\lambda)(\Theta+v).
 \]
We write $v(x,t)$ as
 \[
 v(x,t)
 =
 \lambda^{-\frac{n-2}{2}}\epsilon(y,t),
 \qquad
 y=\frac{x}{\lambda}.
 \]
The relation \eqref{4.1} is rewritten as
 \[
 u(x,t)={\sf Q}_{\lambda(t)}(x)+\Theta(x,t)+\epsilon_{\lambda(t)}(x,t).
 \]
Under this setting,
it is natural to assume that
 \begin{equation}\label{4.2}
 |\epsilon(y,s)|\ll{\sf Q}(y).
 \end{equation}
The function $\epsilon(y,t)$ satisfies
 \begin{align*}
 \frac{\epsilon_t}{\lambda^{\frac{n-2}{2}}}
 =
 \frac{H_y\epsilon}{\lambda^{\frac{n+2}{2}}}
 +
 \frac{V(y)}{\lambda^2}\Theta(x,t)
 +
 {\sf N}(v)
 +
 \frac{\lambda_t}{\lambda^{\frac{n}{2}}}\Lambda_y{\sf Q}(y)
 +
 \frac{\lambda_t}{\lambda^{\frac{n}{2}}}\Lambda_y\epsilon.
 \end{align*}
Neglecting ${\sf N}(v)$ and assuming \eqref{4.2},
we obtain
 \begin{align*}
 \lambda^2\epsilon_t
 \sim
 H_y\epsilon
 +
 \lambda^\frac{n-2}{2}V(y)\Theta(x,t)
 +
 \lambda\lambda_t\Lambda_y{\sf Q}(y).
 \end{align*}
Since $x=\lambda(t)y$ and $\lim_{t\to T}\lambda(t)=0$,
we here replace $\Theta(x,t)$ by $\Theta(0,t)$.
 \begin{align*}
 \lambda^2\epsilon_t
 \sim
 H_y\epsilon
 +
 \lambda^\frac{n-2}{2}V(y)\Theta(0,t)
 +
 \lambda\lambda_t\Lambda_y{\sf Q}(y).
 \end{align*}
We take the inner product $(\cdot,\Lambda_y{\sf Q})_{L_y^2(\R^n)}$ to get
 \begin{align*}
 \lambda^2(\epsilon_t,\Lambda_y{\sf Q})_2
 &\sim
 \lambda^\frac{n-2}{2}\Theta(0,t)(V,\Lambda_y{\sf Q})_2
 +
 \lambda\lambda_t\|\Lambda_y{\sf Q}\|_2^2.
 \end{align*}
In addition to \eqref{4.2},
we assume that the left-hand side is negligible in the relation.
Since $\Theta(0,t)=A(T-t)^l$,
we obtain
 \[
 0
 \sim
 \lambda^\frac{n-2}{2}A(T-t)^l(V,\Lambda_y{\sf Q})_2
 +
 \lambda\lambda_t\|\Lambda_y{\sf Q}\|_2^2.
 \]
By a direct calculation,
the first term is computed as
 \begin{align}\label{4.3}
 (V,\Lambda_y{\sf Q})_2
 &=
 (p{\sf Q}^{p-1},\Lambda_y{\sf Q})_2
 =
 \left.
 \frac{d}{d\lambda}\int_{\R^n}{\sf Q}_\frac{1}{\lambda}^pdy
 \right|_{\lambda=1}
 \nonumber
 \\
 &=
 \left.
 \frac{d}{d\lambda}\lambda^{-\frac{n-2}{2}}\|{\sf Q}\|_p^p
 \right|_{\lambda=1}
 =
 -\frac{n-2}{2}\|{\sf Q}\|_p^p.
 \end{align}
Therefore we obtain a differential equation for $\lambda(t)$.
 \[
 \frac{\lambda_t}{\lambda^{\frac{n-4}{2}}}
 \sim
 \frac{A(n-2)}{2}\frac{\|{\sf Q}\|_p^p}{\|\Lambda_y{\sf Q}\|_2^2}(T-t)^l.
 \]
Since $\lambda(t)$ must be positive and $\lim_{t\to T}\lambda(t)=0$,
the constant $A$ must be negative.
We finally obtain
 \[
 \lambda(t)
 \sim
 \left(
 \frac{-A(6-n)(n-2)}{4(l+1)}\frac{\|{\sf Q}\|_p^p}{\|\Lambda_y{\sf Q}\|_2^2}
 \right)^\frac{2}{6-n}
 (T-t)^\frac{2(l+1)}{6-n}.
 \]
This is the desired result.
From now on,
we choose
\[
 A=-1.
\]

\section{Formulation}
\label{Sec5}
In this section,
we set up our problem as in the proof of Theorem 1 \cite{delPino3}.
To justify the argument in Section \ref{Sec4},
we need several corrections.

\subsection{Setting}
\label{Sec5.1}
We look for solutions of the form
 \[
 u(x,t)={\sf Q}_{\lambda(t)}(x)+\Theta(x,t)\chi_\text{out}+v(x,t),
 \]
where $\chi_\text{out}$ is a cut off function defined by
 \[
 \chi_\text{out}=\chi\Bigl( (T-t)^B|z| \Bigr),
 \qquad
 z=\frac{x}{\sqrt{T-t}},
 \qquad
 B=\frac{l+\frac{1}{2}}{2l+2}.
 \]
A function $v(x,t)$ satisfies
 \begin{equation}\label{5.1}
 v_t
 =
 \Delta v
 +
 f'({\sf Q}_\lambda)(\Theta\chi_\text{out}+v)
 +
 {\sf N}(v)
 +
 \frac{\lambda_t}{\lambda^\frac{n}{2}}\Lambda_y{\sf Q}(y)
 +
 h_\text{out},
 \end{equation}
where
 \begin{equation}\label{5.2}
 h_\text{out}
 =
 2\nabla_x\Theta\cdot\nabla_x\chi_\text{out}
 +
 \Theta\Delta_x\chi_\text{out}
 -
 \Theta\pa_t\chi_\text{out}.
 \end{equation}
We decompose $v(x,t)$ as
 \begin{equation}\label{5.3}
 v(x,t)
 =
 \lambda^{-\frac{n-2}{2}}\epsilon(y,t)\chi_\text{in}
 +
 w(x,t),
 \qquad
 y=\frac{x}{\lambda}.
 \end{equation}
The function $\epsilon(y,t)$ is defined on $(y,t)\in B_{2R}\times(0,T)$ and
$\chi_\text{in}=\chi(\frac{|y|}{R})$.
Plugging this into \eqref{5.1},
we get
 \begin{align*}
 \frac{\epsilon_t}{\lambda^{\frac{n-2}{2}}}
 \chi_\text{in}
 +
 w_t
 &=
 \frac{H_y\epsilon}{\lambda^{\frac{n+2}{2}}}
 \chi_\text{in}
 +
 \Delta w
 +
 \frac{V(y)}{\lambda^2}(\Theta\chi_\text{out}+v)
 +
 {\sf N}(v)
 +
 \frac{\lambda_t}{\lambda^{\frac{n}{2}}}\Lambda_y{\sf Q}(y)
 +
 h_\text{out}+h_\text{in},
 \end{align*}
 where
 \begin{align}\label{5.4}
 h_\text{in}
 =
 \frac{1}{\lambda^{\frac{n+2}{2}}}
 \left(
 2\nabla_y\epsilon\cdot\nabla_y\chi_\text{in}+\epsilon\Delta_y\chi_\text{in}
 \right)
 +
 \frac{\lambda_t}{\lambda^{\frac{n}{2}}}\Lambda_y\epsilon\chi_\text{in}
 -
 \frac{1}{\lambda^\frac{n-2}{2}}\epsilon\pa_t\chi_\text{in}.
 \end{align}
We introduce a parabolic system of $(\epsilon(y,t),w(x,t))$.
 \begin{equation}\label{5.5}
 \begin{cases}
 \lambda^2\epsilon_t
 =
 H_y\epsilon+G_\text{in}(\lambda,w)
 &
 \text{in } B_{2R}\times(0,T),
 \\
 w_t
 =
 \Delta_xw+G_\text{out}(\lambda,w,\epsilon)
 &
 \text{in } \R^5\times(0,T),
 \end{cases}
 \end{equation}
where
 \begin{align}
 \label{5.6}
 G_\text{in}(\lambda,w)
 &=
 \lambda^\frac{n-2}{2}
 \Theta(x,t)V
 +
 \lambda^\frac{n-2}{2}w(x,t)V
 +
 \lambda\lambda_t\Lambda_y{\sf Q},
 \\[1mm]
 \label{5.7}
 G_\text{out}(\lambda,w,\epsilon)
 &=
 h_\text{out}+h_\text{in}
 +
 \frac{1}{\lambda^2}(1-\chi_\text{in})V(y)(\Theta\chi_\text{out}+w)
 +
 \frac{\lambda_t}{\lambda^{\frac{n}{2}}}
 (1-\chi_\text{in})\Lambda_y{\sf Q}(y)
 +
 {\sf N}(v).
 \end{align}
We can check that $v(x,t)$ defied in \eqref{5.3} gives a solution of \eqref{5.1},
if $(\epsilon(y,t),w(x,t))$ solves \eqref{5.5}.
By a lack of boundary condition in the equation for $\epsilon(y,t)$ in \eqref{5.5},
the problem may not be uniquely solvable.
So we appropriately construct a solution $\epsilon(y,t)$
such that $\epsilon(y,t)$ decays enough in the region $|y|\sim R$ (see \eqref{6.16}).

\subsection{Fixed point argument}
\label{Sec5.2}
To construct a solution of \eqref{5.5},
we apply a fixed point argument.
We put
 \[
 {\cal W}(x,t)
 =
 \begin{cases}
 (T-t)^l\left( 1+|z|^{2l+2} \right)
 & \text{for } |z|<(T-t)^{-\frac{l}{2l+2}},
 \\[1mm] \dis
 \frac{1}{1+|x|^2} & \text{for } |z|>(T-t)^{-\frac{l}{2l+2}},
 \end{cases}
 \qquad
 z=\frac{x}{\sqrt{T-t}}.
 \]
We fix two small positive constants $\delta_0$ and $\sigma$.
Let $X_\sigma$ be the space of  all continuous functions on $\R^5\times[0,T-2\sigma]$ satisfying
 \[
 |w(x,t)|
 \leq
 \delta_0{\cal W}(x,t)
 \quad
 \text{for }
 t\in[0,T-2\sigma].
 \]
The metric in $X_\sigma$ is defined by
 \begin{equation}\label{5.8}
 d_{X_\sigma}(w_1,w_2)=\|w_1-w_2\|_{C(\R^5\times[0,T-2\sigma])}.
 \end{equation}
We extend $w(x,t)$ to a continuous function on $\R^5\times[0,T]$.
 \[
 \bar{w}(x,t)=
 \begin{cases}
 w(x,t)
 & \text{if } (x,t)\in\R^5\times(0,T-2\sigma),
 \\
 \chi_\sigma(t)w(x,T-2\sigma)
 & \text{if } (x,t)\in\R^5\times(T-2\sigma,T),
 \end{cases}
 \]
where $\chi_\sigma(t)$ is a cut off function satisfying
$\chi_\sigma(t)=1$ if $t\in[0,T-\frac{3}{2}\sigma]$
and
$\chi_\sigma(t)=0$ if $t\in[T-\sigma,T]$.
Furthermore we define
 \[
 \tilde{w}(x,t)
 =
 \begin{cases}
 \bar{w}(x,t) & \text{if } t\in[0,T-2\sigma],
 \\
 \delta_0{\cal W}(x,t) & \text{if } t\in[T-2\sigma,T] \text{\ \ and\ \ }
 \bar{w}(x,t)>\delta_0{\cal W}(x,t),
 \\
 \bar{w}(x,t) & \text{if } t\in[T-2\sigma,T] \text{\ \ and\  \ }
 |\bar{w}(x,t)|\leq\delta_0{\cal W}(x,t),
 \\
 -\delta_0{\cal W}(x,t) & \text{if } t\in[T-2\sigma,T] \text{\ \ and\ \ }
 \bar{w}(x,t)<-\delta_0{\cal W}(x,t).
 \end{cases}
 \]
From this definition,
we see that $\tilde{w}(x,t)\in C(\R^5\times[0,T])$ and
 \begin{equation}\label{5.9}
 |\tilde{w}(x,t)|
 \leq
 \begin{cases}
 \delta_0{\cal W}(x,t) & \text{for } (x,t)\in\R^5\times(0,T-\sigma),
 \\
 0 & \text{for } (x,t)\in\R^5\times(T-\sigma,T).
 \end{cases}
 \end{equation}
For given $w(x,t)\in X_\sigma$,
we first determine $\lambda(t)$ by the orthogonal condition \eqref{6.1}.
Next
we construct $\epsilon(y,t)$ as a solution of
 \[
 \lambda^2\epsilon_t=H_y\epsilon+G_\text{in}(\lambda,\tilde w)
 \qquad\text{in } B_{2R}\times(0,T).
 \]
After that
we solve the problem
 \[
 W_t=\Delta_xW+G_\text{out}(\lambda,\tilde{w},\epsilon)
 \qquad\text{in } \R^5\times(0,T).
 \]
By using this $W(x,t)$,
we define the mapping $w(x,t)\mapsto W(x,t)$.
The fixed point of this mapping gives the desired solution of \eqref{5.5}.
Finally
we take $\sigma\to0$ to obtain the solution described in Theorem \ref{Thm1}.
For the rest of paper,
we construct the solution mapping in the above procedure.

\subsection{Notations}
\label{Sec5.3}
From now on,
we assume $R\gg1$ and choose $T$ as
 \begin{equation*}
 T=e^{-R}.
 \end{equation*}
For any positive constant $k_1$ and $k_2$,
we write
 \[
 k_1\lesssim k_2
 \]
if there is a universal constant $c>0$ independent of $R$, $\delta_0$, $\sigma$
such that $k_1\leq ck_2$.

\section{Inner solution}
\label{Sec6}
In this section,
we repeat the argument in Lemma 4.1 \cite{delPino3} to define the mapping
$w(x,t)\in X_\sigma\mapsto\epsilon(y,t)$ mentioned in Section \ref{Sec5.2}.
Throughout this section,
$\tilde{w}(x,t)\in C(\R^5\times[0,T])$
represents an extension of $w(x,t)\in X_\sigma$ defined in Section \ref{Sec5.2}.

\subsection{Choice of $\lambda(t)$}
We define $\lambda(t)\in C^1([0,T])$ as the unique solution of
 \begin{equation}\label{6.1}
 (\chi_{4R}G_\text{in}(t),\Lambda_y{\sf Q})_{L_y^2(B_{8R})}=0
 \quad\text{for } t\in(0,T)
 \qquad
 \text{and}
 \qquad
 \lambda(T)=0,
 \end{equation}
where $G_\text{in}(t)=G_\text{in}(\lambda(t),\tilde w(\lambda(t)y,t))$
and $\chi_{4R}(y)=\chi(\frac{|y|}{4R})$.
 \begin{lem}\label{Lem6.1}
 There exists $K>1$ independent of $R$, $\delta_0$, $\sigma$ such that
 \begin{align*}
 \left| \lambda(t)-\alpha_l^2(T-t)^{2l+2} \right|
 &<
 K\left( \delta_0+\frac{T-t}{R^2} \right)(T-t)^{2l+2}
 \qquad \text{\rm for}\ t\in(0,T),
 \\
 \left| \frac{d\lambda}{dt}(t)-(2l+2)\alpha_l^2(T-t)^{2l+1} \right|
 &<
 K\left( \delta_0+\frac{T-t}{R^2} \right)(T-t)^{2l+1}
 \qquad \text{\rm for}\ t\in(0,T),
 \end{align*}
 where
 $\alpha_l=-\frac{(\chi_{4R}V,\Lambda_y{\sf Q})_{L_y^2(B_{8R})}}
 {2(l+1)(\chi_{4R}\Lambda_y{\sf Q},\Lambda_y{\sf Q})_{L_y^2(B_{8R})}}$.
 \end{lem}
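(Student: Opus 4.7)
The plan is to rewrite condition \eqref{6.1} as an integral equation for $\lambda^{1/2}(t)$ and apply a fixed-point argument in a suitable ball around the formal profile $\lambda_0(t):=\alpha_l^2(T-t)^{2l+2}$. Substituting \eqref{5.6} into \eqref{6.1} and specializing to $n=5$ gives
\begin{equation*}
\lambda\lambda_t\,(\chi_{4R}\Lambda_y{\sf Q},\Lambda_y{\sf Q})_{L^2(B_{8R})}
= -\lambda^{3/2}\bigl(\chi_{4R}[\Theta(\lambda y, t) + \tilde w(\lambda y, t)]V,\Lambda_y{\sf Q}\bigr)_{L^2(B_{8R})}.
\end{equation*}
Dividing by $\lambda^{1/2}$, using $\lambda_t/\lambda^{1/2} = 2\tfrac{d}{dt}\lambda^{1/2}$, and integrating from $t$ to $T$ with the terminal condition $\lambda(T)=0$ produces
\begin{equation*}
2\lambda^{1/2}(t) = \int_t^T F(\lambda(s), s)\,ds,
\qquad
F(\lambda, t) := \frac{\bigl(\chi_{4R}[\Theta(\lambda y, t)+\tilde w(\lambda y, t)]V,\Lambda_y{\sf Q}\bigr)_{L^2(B_{8R})}}{(\chi_{4R}\Lambda_y{\sf Q},\Lambda_y{\sf Q})_{L^2(B_{8R})}}.
\end{equation*}
Replacing $\Theta(\lambda y, t)$ by $\Theta(0, t) = -(T-t)^l$ and dropping $\tilde w$ isolates the main term $F_0(t) = 2(l+1)\alpha_l(T-t)^l$ directly from the definition of $\alpha_l$; integrating in time reproduces $\lambda_0$.

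Next I bound $F(\lambda, t) - F_0(t)$ for $\lambda$ close to $\lambda_0$. The Taylor piece $\Theta(\lambda y, t) - \Theta(0, t) = -(T-t)^l[e_l(\lambda y/\sqrt{T-t})-1]$ satisfies $|e_l(z)-1|\lesssim |z|^2$ for $|z|\le 1$, hence contributes to $F$ at most $(T-t)^{l-1}\lambda^2 R^2\lesssim (T-t)^{5l+3}R^2$, which under $T=e^{-R}$ is far below $(T-t)^{l+1}/R^2$. For the $\tilde w$ piece, on $|y|\le 8R$ one has $|x|=|\lambda y|\lesssim R(T-t)^{2l+2}\ll (T-t)^{1/(2l+2)}$, so $(x,t)$ lies in the first branch of ${\cal W}$ with ${\cal W}(x,t)\sim (T-t)^l$; then \eqref{5.9} gives $|\tilde w(\lambda y, t)|\lesssim \delta_0(T-t)^l$, contributing $O(\delta_0(T-t)^l)$ to $F$.

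I then run the fixed-point argument in the closed subset $X^\lambda\subset C([0,T])$ of nonnegative $\lambda$ with $\lambda(T)=0$ and $|\lambda(t)-\lambda_0(t)|\le K(\delta_0 + (T-t)/R^2)(T-t)^{2l+2}$, using the map $\Phi(\lambda)(t) := \bigl(\tfrac{1}{2}\int_t^T F(\lambda(s),s)\,ds\bigr)^2$. For $K$ large and $\delta_0, T$ small, the estimates above show $\Phi(X^\lambda)\subset X^\lambda$, and the bound $\Phi(\lambda)_t = O((T-t)^{2l+1})$ yields equicontinuity of the image; Schauder's theorem then produces a fixed point $\lambda\in C^1([0,T])$ satisfying the first bound of the lemma, and the second bound follows directly from the differential form $\lambda_t = -F(\lambda, t)\lambda^{1/2}$. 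The main technical subtlety is that $\tilde w$ is only continuous, not Lipschitz, in $x$, so $\lambda\mapsto F(\lambda, t)$ is continuous but not Lipschitz in $\lambda$; this is why I rely on Schauder rather than Banach, with the pointwise smallness of $\tilde w$ alone being enough for the self-mapping step without needing a quantitative modulus of continuity.
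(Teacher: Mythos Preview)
Your approach is essentially the same as the paper's: both pass to $C(t)=\sqrt{\lambda(t)}$, rewrite the orthogonality condition as an integral equation, and obtain the solution by a fixed-point argument after bounding the two error contributions (the Taylor remainder $\Theta(\lambda y,t)-\Theta(0,t)$ and the $\tilde w$ term) exactly as you do. The paper runs the fixed point in the slightly cruder ball $\mathcal{S}=\{0\le C\le \sqrt{T-t}/R\}$ and deduces the precise asymptotics \emph{a posteriori}, whereas you bake the target bound into the fixed-point set; this is a cosmetic difference, and your Schauder argument matches what the paper does for existence.

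The one point you omit is uniqueness. The paper proves it separately: since $\tilde w(\cdot,t)\equiv 0$ for $t\in(T-\sigma,T)$, any two solutions $C_1,C_2\in\mathcal{S}$ agree on that interval; on $(0,T-\sigma)$ they use the change of variables $y'=(C_1/C_2)^2 y$ in the $\tilde w$ integral to extract a Lipschitz-type bound $|\tfrac{d}{dt}(C_1-C_2)|\le c_\sigma|C_1-C_2|$ and conclude by Gronwall. Since the text preceding the lemma defines $\lambda$ as \emph{the unique} solution of \eqref{6.1}, this step is needed for the object in the lemma to be well-defined, and your Schauder argument alone does not supply it.
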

\begin{proof}
From \eqref{5.6}, the orthogonal condition \eqref{6.1} is explicitly given by
 \[
 \sqrt{\lambda}(\chi_{4R}V\Theta(t),\Lambda_y{\sf Q})_{L_y^2(B_{8R})}
 +
 \sqrt{\lambda}(\chi_{4R}V\tilde w(t),\Lambda_y{\sf Q})_{L_y^2(B_{8R})}
 +
 \frac{d\lambda}{dt}(\chi_{4R}\Lambda_y{\sf Q},\Lambda_y{\sf Q})_{L_y^2(B_{8R})}
 =
 0.
 \]
Put $C(t)=\sqrt{\lambda(t)}$.
The function $C(t)$ satisfies
 \begin{equation}\label{6.2}
 \frac{dC}{dt}
 =
 -\frac{(\chi_{4R}V\Theta(x,t),\Lambda_y{\sf Q})_{L_y^2(B_{8R})}
 +
 (\chi_{4R}V\tilde w(x,t),\Lambda_y{\sf Q})_{L_y^2(B_{8R})}}
 {2(\chi_{4R}\Lambda_y{\sf Q},\Lambda_y{\sf Q})_{L_y^2(B_{8R})}},
 \qquad
 x=C^2y.
 \end{equation}
We now show the unique solvability of the problem \eqref{6.2} in
 \[
 {\cal S}=
 \left\{
 C(t)\in C([0,T]);\ 0\leq C(t)\leq\frac{\sqrt{T-t}}{R}
 \right\}.
 \]
Let us consider
 \[
 \frac{dD}{dt}
 =
 -\frac{(\chi_{4R}V\Theta(x,t),\Lambda_y{\sf Q})_{L_y^2(B_{8R})}
 +
 (\chi_{4R}V\tilde w(x,t),\Lambda_y{\sf Q})_{L_y^2(B_{8R})}}
 {2(\chi_{4R}\Lambda_y{\sf Q},\Lambda_y{\sf Q})_{L_y^2(B_{8R})}},
 \qquad x=C^2y.
 \]
This differential equation defines the mapping $C(t) \mapsto D(t)$.
Put $z=\frac{x}{\sqrt{T-t}}$.
Since $e_l(z)=1+a_1|z|^2+a_2|z|^4+\cdots+a_l|z|^{2l}$,
the first term on the right-hand side is written as
 \begin{align*}
 (\chi_{4R}V\Theta(x,t),&
 \Lambda_y{\sf Q})_{L_y^2(B_{8R})}
 =
 -(T-t)^l(Ve_l(z),\Lambda_y{\sf Q})_{L_y^2(B_{8R})}
 \\
 &=
 -(T-t)^l
 \left(
 \left(
 \chi_{4R}V,\Lambda_y{\sf Q}
 \right)_{L_y^2(B_{8R})}
 +
 \sum_{k=1}^l
 a_k
 \left(
 \chi_{4R}V|z|^{2k},\Lambda_y{\sf Q}
 \right)_{L_y^2(B_{8R})}
 \right)
 \\
 &=
 -(T-t)^l
 \left(
 \underbrace{
 \left(
 \chi_{4R}V,\Lambda_y{\sf Q}
 \right)_{L_y^2(B_{8R})}}_{=:b_0}
 +
 \sum_{k=1}^l
 \underbrace{
 a_k
 \left(
 \chi_{4R}V\left( \frac{|y|}{R} \right)^{2k},\Lambda_y{\sf Q}
 \right)_{L_y^2(B_{8R})}}_{=:b_k}
 \frac{R^{2k}C(t)^{4k}}{(T-t)^k}
 \right).
 \end{align*}
Since $|b_k|\lesssim1$,
we easily see that if $C(t)\in{\cal S}$
 \[
 \left|
 \sum_{k=1}^lb_k\frac{R^{2k}C(t)^{4k}}{(T-t)^k}
 \right|
 \lesssim
 \frac{T-t}{R^2}
 \qquad\text{for } t\in(0,T).
 \]
Furthermore
when $|z|=\frac{C(t)^2}{\sqrt{T-t}}|y|$ and $C(t)\in{\cal S}$,
it holds that $|z|<1$ for $y\in B_{8R}$.
Therefore
since $\tilde{w}(x,t)$ satisfies \eqref{5.9},
it follows that if $C(t)\in{\cal S}$
 \begin{align*}
 |(\chi_{4R}V\tilde w(C^2y,t),\Lambda_y{\sf Q})_{L_y^2(B_{8R})}|
 &\lesssim
 \delta_0
 (T-t)^l
 |(\chi_{4R}V,\Lambda_y{\sf Q})_{L_y^2(B_{8R})}|
 \lesssim
 \delta_0
 (T-t)^l.
 \end{align*}
Therefore
since $b_0<0$ (see \eqref{4.3}),
we get if $C(t)\in{\cal S}$
 \[
 \frac{dD}{dt}
 =
 -
 \left( 1+O(\delta_0)+O\left( \frac{T-t}{R^2} \right) \right)
 \underbrace{
 \left(
 \frac{|b_0|}
 {2(\chi_{4R}\Lambda_y{\sf Q},\Lambda_y{\sf Q})_{L_y^2(B_{8R})}}
 \right)}_{=:(l+1)\alpha_l}
 (T-t)^l.
 \]
This implies
 \begin{equation}\label{6.3}
 D(t)
 =
 \left( 1+O(\delta_0)+O\left( \frac{T-t}{R^2} \right) \right)
 \alpha_l(T-t)^{l+1}.
 \end{equation}
Therefore
we proved that $D(t)\in{\cal S}$ if $C(t)\in{\cal S}$.
As a consequence,
by a fixed point argument,
we obtain a solution $C(t)$ of \eqref{6.2} satisfying \eqref{6.3}.
Next
we prove the uniqueness for solutions of \eqref{6.2} in ${\cal S}$.
Let $C_1(t),C_2(t)\in{\cal S}$ be two solutions of \eqref{6.2}.
Since $\tilde{w}(x,t)=0$ for $t\in(T-\sigma,T)$,
it is clear that $C_1(t)=C_2(t)$ for $t\in(T-\sigma,T)$.
We write $x_1=C_1^2y$, $x_2=C_2^2y$ and $y'=\frac{C_1^2}{C_2^2}y$.
By the change of variables,
we see that
 \begin{align*}
 &
 \left| \frac{d}{dt}(C_1-C_2) \right|
 \lesssim
 \left|
 \int_{B_{8R}}
 \chi_{4R}V\tilde w(x_1,t)\Lambda_y{\sf Q}
 dy
 -
 \int_{B_{8R}}
 \chi_{4R}V\tilde w(x_2,t)\Lambda_y{\sf Q}
 dy
 \right|
 \\
 &\qquad=
 \left|
 \int_{B_{8R}}
 \chi_{4R}V\tilde{w}(x_1,t)\Lambda_y{\sf Q}
 dy
 -
 \left( \frac{C_1}{C_2} \right)^{10}
 \int_{B_{8(\frac{C_2}{C_1})^2R}}
 \chi_{4R}(y')V(y')\tilde{w}(x_1,t)\Lambda_y{\sf Q}(y')
 dy
 \right|
 \\
 &\qquad\lesssim
 \left|
 1-\left( \frac{C_1}{C_2} \right)^{10}
 \right|
 \int_{B_{8R}}
 \left| \chi_{4R}V\tilde w(x_1,t)\Lambda_y{\sf Q} \right|
 dy
 +
 \left( \frac{C_1}{C_2} \right)^{10}
 \left|
 \int_{B_{8R}}
 \chi_{4R}V\tilde w (x_1,t)\Lambda_y{\sf Q}
 dy
 \right.
 \\
 &\qquad\qquad
 \left.
 -
 \int_{B_{8(\frac{C_2}{C_1})^2R}}
 \chi_{4R}(y')V(y')\tilde{w}(x,t)\Lambda_y{\sf Q}(y')
 dy
 \right|.
 \end{align*}
Repeating the above argument,
we can verify that $C_1(t),C_2(t)$ satisfy \eqref{6.3}.
Therefore
we find that  $C_1(t),C_2(t)>\frac{\alpha_l}{2}\sigma^{l+1}$ for $t\in(0,T-\sigma)$.
As a consequence,
there exists $c_\sigma>0$ such that
 \begin{align*}
 \left| \frac{d}{dt}(C_1-C_2) \right|
 <
 c_\sigma|C_1-C_2|
 \qquad\text{for } t\in(0,T-\sigma).
 \end{align*}
This assures the uniqueness of solutions in $t\in(0,T-\sigma)$.
Therefore
the uniqueness of \eqref{6.2} in ${\cal S}$ is proved.
Since $C(t)=\sqrt{\lambda(t)}$,
we obtain the conclusion.
\end{proof}

\subsection{Construction of $\epsilon(y,t)$}
\label{Sec6.2}
Throughout this subsection,
$\lambda(t)$ represents the function given in Lemma \ref{Lem6.1}.
For simplicity,
we write
 \[
 G_\text{in}(t)=G_\text{in}(\lambda(t),\tilde w(\lambda(t)y,t)).
 \]
We define a function $g(y,t)\in C(\R^5\times[0,T])$ as a solution of
 \[
 \begin{cases}
 -H_yg
 =
 \chi_{4R}G_\text{in}(t)
 & \text{in } B_{8R},
 \\
 g=0
 & \text{on } \pa B_{8R},
 \\
 g(\cdot,t) \text{ is radial}.
 \end{cases}
  \]
 Since
 $(\chi_{4R}G_\text{in}(t),\Lambda_y{\sf Q})_{L_y^2(B_{8R})}=0$
 for $t\in(0,T)$ (see \eqref{6.1}),
 the radial solution $g(r,t)$ is given by
 \[
 g(r,t)
 =
 k\Gamma(r)\int_r^{8R}
 \Lambda_y{\sf Q}
 \chi_{4R}G_\text{in}(t)r'^4dr'
 -
 k\Lambda_y{\sf Q}(r)\int_r^{8R}
 \Gamma\chi_{4R}G_\text{in}(t)r'^4dr'
 \]
for some constant $k\in\R$ depending on $\Lambda_y{\sf Q}(r)$ and $\Gamma(r)$.
The function $\Gamma(r)$ is a radial solution of $H_y\psi=0$
given in the proof of Lemma \ref{Lem3.3}.
From \eqref{5.6} and Lemma \ref{Lem6.1},
we verify that
 \begin{align*}
 |G_\text{in}|
 &=
 \left|
 \lambda^\frac{3}{2}\Theta(x,t)V
 +
 \lambda^\frac{3}{2}\tilde w(x,t)V
 +
 \lambda\lambda_t\Lambda_y{\sf Q}
 \right|
 \\
 &\lesssim
 \frac{\lambda^\frac{3}{2}(T-t)^l}{1+|y|^4}
 +
 \frac{\lambda|\lambda_t|}{1+|y|^3}
 <
 \frac{\lambda^\frac{3}{2}\lambda^\frac{l}{2l+2}}{1+|y|^4}
 +
 \frac{\lambda\lambda^{\frac{2l+1}{2l+2}}}{1+|y|^3}
 \\
 &\lesssim
 \frac{\lambda^{\frac{4l+3}{2l+2}}}{1+|y|^3}
 \qquad\text{for } |y|<8R.
 \end{align*}
Therefore
by a direct computation,
we get
 \begin{equation}\label{6.4}
 |g(y,t)|
 <
 \frac{\lambda^\frac{4l+3}{2l+2}}{1+|y|}
 \qquad\text{for } |y|<8R.
 \end{equation}
For simplicity,
we put
 \[
 \gamma=\frac{4l+3}{2l+2}.
 \]
We introduce a new time variable $s$ defined by
 \begin{equation}\label{6.5}
 \frac{ds}{dt}=\frac{1}{\lambda(t)^2} \qquad\text{and}\qquad s(t)|_{t=0}=0.
 \end{equation}
Since $\frac{\alpha_l^2}{2}(T-t)^{2l+2}<\lambda<2\alpha_l^2(T-t)^{2l+2}$
(see Lemma \ref{Lem6.1}),
it is expressed in the variable $s$.
 \begin{equation}\label{6.6}
 \frac{\alpha_l^2}{2}
 \left(
 \frac{1}{T^{-(4l+3)}+\frac{4l+3}{4}\alpha_l^2s}
 \right)^{\frac{2l+2}{4l+3}}
 <
 \lambda
 <
 2\alpha_l^2
 \left(
 \frac{1}{T^{-(4l+3)}+4(4l+3)\alpha_l^2s}
 \right)^{\frac{2l+2}{4l+3}}.
 \end{equation}
Let $\mu_1^{(8R)}<0$ and $\psi_1^{(8R)}(y)\in H_0^1(B_{8R})$ be defined
in Section \ref{Sec3.2}.
We consider
 \[
 \begin{cases}
 \pa_s E
 =
 H_yE+g(y,s)
 &
 \text{in } B_{8R}\times(0,\infty),
 \\
 E=0
 &
 \text{on } \pa B_{8R}\times(0,\infty),
 \\
 \dis
 E=\frac{{\sf d_\text{in}}}{\mu_1^{(8R)}}\psi_1^{(8R)}
 &
 \text{for } s=0.
 \end{cases}
 \]
The parameter ${\sf d_\text{in}}$ is determined below.
The desired solution $\epsilon(y,t)$ mentioned in section \ref{Sec5.2}
is obtained by $\epsilon(y,t)=H_yE(y,t)$.
Let $M_1$ be the constant given in Lemma \ref{Lem3.4} and fix a large constant $M>M_1$
such that 
 \begin{equation}\label{6.7}
 |y|^2V(y)+|y|^\frac{5}{2}e^{-\sqrt{|\mu_1|}|y|}\ll1
 \qquad\text{for } |y|>2M.
 \end{equation}
We first consider
 \begin{equation}\label{6.8}
 \begin{cases}
 \pa_s E_1
 =
 \Delta_yE_1+(1-\chi_M)V(y)E_1+g(y,s)
 &
 \text{in } B_{8R}\times(0,\infty),
 \\
 E_1=0
 &
 \text{on } \pa B_{8R}\times(0,\infty),
 \\
 E_1=0
 &
 \text{for } s=0.
 \end{cases}
 \end{equation}
Since $g(y,t)=0$ near $\pa B_{8R}\times[0,\infty)$,
by a certain approximation procedure,
we can verify that there exists $\alpha\in(0,1)$ such that
 \begin{equation}\label{6.9}
 E_1,\
 \nabla_yE_1,\
 \Delta_yE_1,\
 \nabla_y\Delta_yE_1
 \in C^{\alpha}(\bar{B}_{8R}\times[0,\infty)).
 \end{equation}
\begin{lem}\label{Lem6.2}
 There exists $K_1>1$ independent of $R$, $\delta_0$, $\sigma$ such that
 \[
  |E_1(y,s)|+|\nabla E_1(y,s)|\leq K_1R\lambda^\gamma
 \qquad \text{\rm for}\ (y,s)\in B_{8R}\times(0,\infty).
 \]
\end{lem}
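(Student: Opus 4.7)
The natural attack is a supersolution/maximum-principle argument driven by the positive solution $p_M$ of $\mathcal{L}p := \Delta_y p + (1-\chi_M)V p = 0$ supplied by Lemma~\ref{Lem3.4}. The overall strategy is: (i) build a time-independent elliptic barrier $\Phi(y)\geq 0$ on $B_{8R}$ of size $O(R)$ that absorbs the spatial behaviour of the forcing $g$; (ii) multiply by $\lambda(s)^\gamma$ to convert it into a parabolic supersolution; (iii) deduce the $L^\infty$ bound by the maximum principle; (iv) upgrade to a gradient bound via the Schauder estimate in Lemma~\ref{Lem3.5}.

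\textbf{Step 1: construction of $\Phi$.} I would look for a radial $\Phi\geq 0$ solving
\[
 -\mathcal{L}\Phi = \frac{1}{1+|y|}\quad\text{in } B_{8R},\qquad \Phi|_{\partial B_{8R}}=0.
\]
Using Lemma~\ref{Lem3.4}, write $\Phi = p_M W$. Because $\mathcal{L}p_M=0$, the equation reduces to $p_M\Delta_y W + 2\nabla p_M\cdot\nabla W = -1/(1+|y|)$, which in radial form becomes
\[
 \bigl(p_M(r)^2 r^4 W'(r)\bigr)' = -\frac{r^4 p_M(r)}{1+r}.
\]
Integrating from $0$ (with $W'(0)=0$) and using $k\leq p_M\leq 1$, one gets $|W'(r)|\lesssim 1$ uniformly in $r\in(0,8R)$ and $R$, whence $0\leq W(r)\lesssim R$ after adjusting by an additive constant so that $W(8R)=0$. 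Hence $0\leq\Phi\leq CR$ and $|\nabla\Phi|\lesssim 1$ with $C$ independent of $R$.

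\textbf{Step 2: parabolic supersolution.} Set $\bar E(y,s) := A\,\lambda(s)^\gamma\Phi(y)$ for a constant $A>0$ to be fixed. Then
\[
 \partial_s\bar E - \mathcal{L}\bar E \;=\; A\gamma\lambda^{\gamma-1}\lambda_s\Phi + \frac{A\lambda^\gamma}{1+|y|}.
\]
The first term is negative since $\lambda_s<0$; by Lemma~\ref{Lem6.1} and $ds/dt = \lambda^{-2}$ one has $|\lambda_s|\lesssim \lambda^{\gamma+1}$, so the negative contribution is $\lesssim AR\lambda^{2\gamma}$, uniform in $y$. The positive term is bounded below by $A\lambda^\gamma/(9R)$. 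Comparing with $|g(y,s)|\leq C_g\lambda^\gamma/(1+|y|)$ from \eqref{6.4}, choose $A=2C_g$; the supersolution inequality $\partial_s\bar E - \mathcal{L}\bar E\geq |g|$ then reduces to $\lambda^\gamma R^2\lesssim 1$, which is satisfied for all $s\geq 0$ since $\lambda\leq C T^{2l+2} = Ce^{-R(2l+2)}$ and $\gamma>1$.

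\textbf{Step 3: maximum principle.} To justify the comparison despite the positive potential $(1-\chi_M)V$, substitute $E_1 = p_M U$ and $\bar E = p_M \bar U$; the difference $\tilde U := \bar U - U$ satisfies a parabolic inequality
\[
 \tilde U_s - \Delta_y \tilde U - \frac{2\nabla p_M}{p_M}\!\cdot\!\nabla\tilde U \;\geq\; 0
\]
with no zeroth-order term, together with $\tilde U\geq 0$ at $s=0$ and on $\partial B_{8R}$. The classical weak maximum principle then gives $\tilde U\geq 0$, whence $|E_1|\leq \bar E\leq K_1' R\lambda^\gamma$ (the argument for the lower bound is identical using $-E_1$).

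\textbf{Step 4: gradient bound.} With $\|E_1\|_{L^\infty}\lesssim R\lambda^\gamma$ and $\|g\|_{L^\infty}\lesssim\lambda^\gamma$ in hand, apply Lemma~\ref{Lem3.5} on unit-size parabolic cubes centred at interior points of $B_{8R}$, combined with a standard parabolic boundary Schauder estimate to cover a neighbourhood of $\partial B_{8R}$; both yield $|\nabla E_1|\lesssim R\lambda^\gamma$. (The regularity needed to apply Lemma~\ref{Lem3.5} is provided by \eqref{6.9}.) Setting $K_1$ large enough gives the stated bound.

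The main obstacle is Step 2: one must verify, with explicit control of constants, that the unfavourable sign of $\lambda_s$ does not spoil the supersolution inequality. This is the point where the choice $T=e^{-R}$ enters crucially, as it forces $\lambda^\gamma R^2$ to be exponentially small in $R$ and thus absorbable by the main source term $\lambda^\gamma/(1+|y|)$.
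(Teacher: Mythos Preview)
Your proposal is correct and follows essentially the same route as the paper: build an elliptic barrier $\Phi$ of size $O(R)$ from the positive kernel $p_M$ of Lemma~\ref{Lem3.4}, multiply by $\lambda(s)^\gamma$, use $T=e^{-R}$ to make the negative $\lambda_s$-contribution absorbable by the source term, apply comparison, and finish with a local parabolic gradient estimate. The paper does exactly this (it writes the barrier via the explicit variation-of-parameters formula on $B_{16R}$ rather than $B_{8R}$, so that the barrier stays $\gtrsim R$ on all of $B_{8R}$, but this is cosmetic). Two small remarks: your claim $|\nabla\Phi|\lesssim 1$ in Step~1 is not quite right (the term $p_M'W$ is only $O(R)$ near $r\approx M$), but you never use it; and in Step~4 you should note, as the paper does via \eqref{6.11}, that $\sup_{s-1<s'<s}\lambda(s')\lesssim\lambda(s)$ so that the output of the local parabolic estimate is expressed in terms of $\lambda(s)$ at the endpoint.
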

\begin{proof}
To construct a comparison function,
we put
 \[
 p(r)
 =
 p_1(r)\int_r^{16R}\frac{dr_1}{p_1(r_1)^2r_1^{n-1}}\int_0^{r_1}
 \frac{p_1(r_2)r_2^{n-1}}{1+r_2}dr_2,
 \]
where $p_1(r)$ is a radial function given in Lemma \ref{Lem3.4}.
The function $p(r)$ gives a positive radial solution of
 \[
 \begin{cases}
 \dis
 \Delta_yp+(1-\chi_M)Vp+\frac{1}{1+|y|}=0 & \text{in } B_{16R},
 \\
 p=0 & \text{on } \pa B_{16R}.
 \end{cases}
 \]
Since $k<p_1(r)<1$ for $r>0$ (see Lemma \ref{Lem3.4}),
there exist $k_1$, $k_2>0$ independent of $M$, $R$ such that
 \begin{equation}\label{6.10}
 k_1R<p(r)<k_2R
 \qquad\text{for } r\in(0,8R).
 \end{equation}
We now check that $K\lambda^\gamma p(y)$ gives a super-solution of \eqref{6.8}.
Since
$|\lambda\frac{d\lambda}{dt}|<(4l+4)\alpha_l^4(T-t)^{4l+3}
 <(4l+4)\alpha_l^4T^{4l+3}$
(see Lemma \ref{Lem6.1}),
we see from \eqref{6.5} and \eqref{6.10} that
 \begin{align*}
 (\pa_s-\Delta_y-(1-\chi_M)V)\lambda^\gamma p(y)
 &=
 \left(
 \frac{\gamma}{\lambda}\frac{dt}{ds}\frac{d\lambda}{dt}p(y)
 +
 \frac{1}{1+|y|}
 \right)
 \lambda^\gamma
 \\
 &>
 \left(
 -\gamma\lambda\frac{d\lambda}{dt}p(y)+\frac{1}{1+|y|}
 \right)
 \lambda^\gamma
 \\
 &>
 \left(
 -(4l+4)\gamma\alpha_l^4k_2T^{4l+3}R+\frac{1}{1+|y|}
 \right)
 \lambda^\gamma.
 \end{align*}
Since $T=e^{-R}$,
it holds that
 \[
 (4l+4)\gamma\alpha_l^4k_2T^{4l+3}R
 <
 \frac{\frac{1}{2}}{1+8R}
 < 
 \frac{\frac{1}{2}}{1+|y|}
 \qquad\text{for } y\in B_{8R}.
 \]
Therefore
we obtain
 \begin{align*}
 (\pa_s-\Delta_y-(1-\chi_M)V)K\lambda^\gamma p(y)
 >
 \frac{K}{2}\frac{\lambda^\gamma}{1+|y|}
 \qquad
 \text{for } y\in B_{8R}.
 \end{align*}
Since $|g(y,s)|\lesssim\frac{\lambda^\gamma}{1+|y|}$ (see \eqref{6.4}),
by a comparison argument,
we obtain if $K\gg1$
 \[
 |E_1(y,s)|
 <
 K\lambda^\gamma p(y)
 <
 Kk_2R\lambda^\gamma
 \qquad \text{for } (y,s)\in B_{8R}\times(0,\infty).
 \]
Applying a local parabolic estimate in \eqref{6.8},
we get from \eqref{6.4} that
 \begin{align*}
 |\nabla_yE_1(y,s)|
 &\lesssim
 \sup_{\min\{s-1,0\}<s'<s}\sup_{|y'-y|<1}
 \left( |E_1(y',s')|+|g(y',s')| \right)
 \nonumber
 \\
 &\lesssim
 \sup_{\min\{s-1,0\}<s'<s}
 \left( R\lambda(s')^\gamma+\frac{\lambda(s')^\gamma}{1+|y|} \right).
 \end{align*}
From \eqref{6.6},
we can verify that
 \begin{equation}\label{6.11}
 \sup_{\min\{s-1,0\}<s'<s}\lambda(s')\lesssim\lambda(s).
 \end{equation}
Therefore we complete the proof.
\end{proof}
 \begin{lem}\label{Lem6.3}
 There exists $K_2>1$ independent of $R$, $\delta_0$, $\sigma$ such that
 \[
 |\Delta_yE_1(y,s)|+|y|\cdot|\nabla_y\Delta_yE_1(y,s)|
 <
 K_2
 \left(
 \frac{\lambda^\gamma}{1+|y|}
 +
 \frac{R\lambda^\gamma}{1+|y|^2}
 \right)
 \qquad\text{\rm for}\ (y,s)\in B_{6R}\times(0,\infty).
 \]
\end{lem}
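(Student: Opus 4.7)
The plan is to exploit the fact that $F := \Delta_y E_1$ satisfies a parabolic equation with the same linear part as \eqref{6.8}, build a barrier of the shape suggested by the target estimate, and then recover the gradient bound from interior Schauder theory (Lemma \ref{Lem3.5}) after rescaling.

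First, using the regularity \eqref{6.9}, I apply $\Delta_y$ to \eqref{6.8} and expand the Leibniz rule $\Delta_y\bigl((1-\chi_M)V E_1\bigr) = (1-\chi_M)V\,F + {\sf R}(y)E_1 + 2\nabla\bigl((1-\chi_M)V\bigr)\cdot \nabla E_1$, where ${\sf R}(y) = \Delta_y\bigl((1-\chi_M)V\bigr)$. The crucial point is that the unwanted term $(1-\chi_M)V\,\Delta E_1$ produced by Leibniz is exactly $(1-\chi_M)V F$, which reabsorbs into the linear part. Thus $F$ solves
\[
\partial_s F = \Delta_y F + (1-\chi_M)V F + {\sf F}(y,s) \quad\text{in } B_{8R}\times(0,\infty),
\]
with $F(y,0)=0$ and $F=0$ on $\partial B_{8R}$ (using that both $g$ and $E_1$ have Dirichlet data there), where the new forcing
\[
{\sf F}(y,s) = {\sf R}(y)E_1 + 2\nabla\bigl((1-\chi_M)V\bigr)\cdot\nabla E_1 + \Delta_y g.
\]
Using $\Delta_y g = -Vg - \chi_{4R}G_\text{in}$ together with \eqref{6.4} and the pointwise bound on $G_\text{in}$, plus Lemma \ref{Lem6.2} on the boundary-cutoff contributions, a direct calculation yields $|{\sf F}(y,s)|\lesssim \lambda^\gamma/(1+|y|^3) + R\lambda^\gamma/(1+|y|^5)$ on $B_{8R}$, with additional bounded pieces localized in $\{M<|y|<2M\}$ absorbable into the second term.

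Next I look for a super-solution of the form $\Psi(y,s) = K\bigl(\lambda^\gamma/(1+|y|) + R\lambda^\gamma/(1+|y|^2)\bigr)$. Using that in dimension $5$ one has $-\Delta_y(1/(1+|y|))\sim 2/(1+|y|)^3$ and $-\Delta_y(1/(1+|y|)^2)\sim 2/(1+|y|)^4$ for large $|y|$, the elliptic part $-\Delta_y\Psi - (1-\chi_M)V\Psi$ dominates ${\sf F}$ pointwise for $K$ large. For the $\partial_s$-contribution, $\partial_s\lambda^\gamma = \gamma\lambda^{\gamma-1}(\partial_s\lambda) = \gamma\lambda^{\gamma+1}\lambda_t$, which by Lemma \ref{Lem6.1} is of order $\lambda^\gamma \cdot \lambda\lambda_t \lesssim \lambda^\gamma T^{4l+3}$; since $T = e^{-R}$, this is negligible compared with the gain from $-\Delta_y\Psi$ exactly as in Lemma \ref{Lem6.2}. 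The comparison principle then yields $|F(y,s)| \lesssim \lambda^\gamma/(1+|y|) + R\lambda^\gamma/(1+|y|^2)$ throughout $B_{8R}\times(0,\infty)$.

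Finally, for the gradient bound on $\nabla_y\Delta_y E_1 = \nabla_y F$, I apply the Schauder estimate Lemma \ref{Lem3.5} to the equation $\partial_s F = \Delta_y F + (1-\chi_M)V F + {\sf F}$, rescaled to a parabolic cylinder of radius $|y|/4$ centered at $(y,s)$ (which fits inside $B_{8R}$ when $|y|<6R$). The rescaling produces the factor $1/|y|$ and the RHS is controlled by the barrier estimate just established together with the bound on ${\sf F}$, yielding the claimed $|y|^{-1}$-weighted estimate. The main obstacle is the construction and verification of the two-scale barrier $\Psi$: the crossover between the $(1+|y|)^{-1}$ regime (dominant for $|y|\gg R$) and the $R(1+|y|)^{-2}$ regime (dominant for $|y|\ll R$) must be matched carefully so that the super-solution inequality holds uniformly in $|y|$, and one must ensure the $\partial_s$-loss produced by the slow decay of $\lambda^\gamma$ stays absorbable, which is where the choice $T = e^{-R}$ is essential.
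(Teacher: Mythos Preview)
Your approach is correct but differs from the paper's in one clean way. The paper does \emph{not} work with $F=\Delta_y E_1$ directly; instead it sets
\[
 e_1 \;=\; \Delta_y E_1 + (1-\chi_M)V\,E_1,
\]
so that, since $\partial_s E_1 = e_1 + g$, the commutator terms disappear and $e_1$ solves
\[
 \partial_s e_1 = \Delta_y e_1 + (1-\chi_M)V\,e_1 + \bigl(\Delta_y+(1-\chi_M)V\bigr)g,
\]
with forcing bounded by $\lambda^\gamma/(1+|y|^3)$ only. This lets the paper reuse \emph{verbatim} the single-scale barrier from Lemma~\ref{Lem6.2}, yielding $|e_1|\lesssim \lambda^\gamma/(1+|y|)$; the term $R\lambda^\gamma/(1+|y|^2)$ in the final statement then enters only at the end, through $\Delta_y E_1 = e_1 - (1-\chi_M)V E_1$ and Lemma~\ref{Lem6.2}. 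Your route, by contrast, keeps the commutator terms ${\sf R}(y)E_1 + 2\nabla((1-\chi_M)V)\cdot\nabla E_1$ in the forcing for $F$, which forces you to build a genuinely two-scale barrier $\Psi$ and to match the $(1+|y|)^{-1}$ and $R(1+|y|)^{-2}$ regimes by hand. That barrier does work (the commutator pieces are localised near $|y|\sim M$ and of size $\lesssim R\lambda^\gamma$, which is what the $R(1+|y|)^{-2}$ piece of $\Psi$ is designed to absorb), and the rescaled Schauder step for $\nabla_y F$ is the same in both arguments. So your argument is sound; the paper's choice of auxiliary quantity simply sidesteps the two-scale barrier by packaging the $R$-dependent contribution into a separate, already-controlled term.
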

\begin{proof}
We put
 \[
 e_1
 =
 \Delta_yE_1+(1-\chi_M)V(y)E_1.
 \]
We easily see from \eqref{6.8} - \eqref{6.9} that $e_1(y,s)$ solves
 \[
 \begin{cases}
 \pa_se_1
 =
 \Delta_ye_1+(1-\chi_M)V(y)e_1+(\Delta_y+(1-\chi_M)V(y))g(y,s)
 &
 \text{in } B_{8R}\times(0,\infty),
 \\
 e_1=0 & \text{on } \pa B_{8R}\times(0,\infty),
 \\
 e_1=0 & \text{for } s=0.
 \end{cases}
 \]
Since $|(\Delta_y+(1-\chi_M)V(y))g(y,s)|<\frac{\lambda^\gamma}{1+|y|^3}$,
by the same argument as in the proof of Lemma \ref{Lem6.2},
we verify that
 \begin{equation}\label{6.12}
 e_1(y,s)
 \lesssim
 \frac{\lambda^\gamma}{1+|y|}
 \qquad\text{for } (y,s)\in B_{8R}\times(0,\infty).
 \end{equation}
To extend $e_1(y,s)$ to $s<0$,
we put
 \[
 \bar{e}_1(y,s)=
 \begin{cases}
 e_1(y,s) & \text{if } s\geq0,
 \\
 0 & \text{if } s<0,
 \end{cases}
 \hspace{15mm}
 \bar{g}(y,s)=
 \begin{cases}
 g(y,s) & \text{if } s\geq0,
 \\
 0 & \text{if } s<0.
 \end{cases}
 \]
Since $e_1,\nabla_ye_1\in C^{\alpha}(\bar B_{8R}\times[0,\infty))$ (see \eqref{6.9}),
we find that
$\bar{e}_1,\nabla_x\bar{e}_1\in C^{\alpha}(\bar B_{8R}\times(-\infty,\infty))$ and
${\bar e}_1(y,s)$ solves
 \[
 \begin{cases}
 \pa_s\bar e_1
 =
 \Delta_y\bar e_1
 +
 (1-\chi_M)V(y)\bar e_1
 +
 (\Delta_y+(1-\chi_M)V(y))\bar g(y,s)
 &
 \text{in } B_{8R}\times(-\infty,\infty),
 \\
 \bar e_1=0 & \text{on } \pa B_{8R}\times(-\infty,\infty).
 \end{cases}
 \]
We fix $(y,s)\in B_{6R}\setminus B_1\times(0,\infty)$.
We write $\rho=|y|$ and define
 \[
 \tilde e_1(Y,S)
 =
 \bar e_1\left( y+\frac{\rho}{3}Y,s+\frac{\rho^2}{9}(S-1) \right).
 \]
We easily see that $\tilde e_1(Y,S)$ solves
 \[
 \dis
 \pa_S\tilde e_1
 =
 \Delta_Y\tilde e_1
 +
 \frac{\rho^2}{9}(1-\chi_M)V\tilde e_1
 +
 \frac{\rho^2}{9}(\Delta_y+2(1-\chi_M)V)\bar g
 \qquad
 \text{in } B_1\times(0,1).
 \]
Since $|y|=\rho$,
it holds that
 \[
 \frac{\rho^2}{9}(1-\chi_M)V
 \lesssim
 \frac{\rho^2}{1+|y+\frac{\rho}{2}Y|^4}
 \lesssim
 \frac{\rho^2}{1+\rho^4}
 \qquad\text{for } |Y|<1.
 \]
Therefore
we get from Lemma \ref{Lem3.5} that
 \begin{align*}
 \|\tilde e_1\|_{C_{(Y,S)}^{\alpha}(B_\frac{1}{2}\times(\frac{1}{2},1))}
 &+
 \|\nabla_Y\tilde e_1\|_{C_{(Y,S)}^{\alpha}(B_\frac{1}{2}\times(\frac{1}{2},1))}
 \\
 &\lesssim
 \|\tilde e_1\|_{L_{(Y,S)}^\infty(B_1\times(0,1))}
 +
 \left\|
 \frac{\rho^2}{9}(\Delta_y+2(1-\chi_M)V)\bar g
 \right\|_{L_{(Y,S)}^\infty(B_1\times(0,1))}.
 \end{align*}
Since $|(\Delta_y+(1-\chi_M)V(y))g(y,s)|<\frac{\lambda^\gamma}{1+|y|^3}$,
we see that
 \[
 \left\|
 \frac{\rho^2}{9}(\Delta_y+2(1-\chi_M)V)\bar g
 \right\|_{L_{(Y,S)}^\infty(B_1\times(0,1))}
 \lesssim
 \frac{\rho^2}{1+\rho^3}
 \left( \sup_{\min\{0,s-\frac{\rho^2}{9}\}<s'<s}\lambda(s') \right)^\gamma.
 \]
From \eqref{6.6} and $T=e^{-R}$,
we verify that
 \[
 \sup_{\min\{0,s-\frac{\rho^2}{9}\}<s'<s}\lambda(s')
 \lesssim
 \lambda(s).
 \]
Since
$\nabla_Y\tilde e_1(Y,S)
 =\frac{\rho}{3}\nabla_y\bar e_1(y+\frac{\rho}{3}Y,s+\frac{\rho^2}{9}S)$,
we deduce from \eqref{6.12} that
 \begin{align*}
 \frac{\rho}{3}|\nabla_ye_1(y,s)|
 &<
 \|\nabla_Y\tilde e_1\|_{C_{(Y,S)}^\alpha(B_\frac{1}{2}\times(\frac{1}{2},1))}
 \\
 &\lesssim
 \|\tilde e_1\|_{L_{(Y,S)}^\infty(B_1\times(0,1))}
 +
 \frac{\rho^2}{1+\rho^3}
 \lambda(s)^\gamma
 \\
 &\lesssim
 \frac{\lambda^\gamma}{1+\rho}
 \end{align*}
Therefore
it follows that
 \[
 |\nabla_ye_1(y,s)|
 \lesssim
 \frac{\lambda^\gamma}{1+|y|^2}
 \qquad
 \text{for } (y,s)\in B_{6R}\setminus B_1\times(0,\infty).
 \]
Combining this estimate and \eqref{6.12},
we obtain
 \[
 |\nabla_ye_1(y,s)|
 \lesssim
 \frac{\lambda^\gamma}{1+|y|^2}
 \qquad
 \text{for } (y,s)\in B_{6R}\times(0,\infty).
 \]
From definition of $e_1(y,s)$ and Lemma \ref{Lem6.2},
we complete the proof.
\end{proof}
Next we put
 \[
 E_2=E-E_1.
 \]
The function $E_2(y,s)$ solves
 \[
 \begin{cases}
 \pa_s E_2
 =
 H_yE_2+\chi_MV(y)E_1
 &
 \text{in } B_{8R}\times(0,\infty),
 \\
 E_2=0
 &
 \text{on } \pa B_{8R}\times(0,\infty),
 \\
 \dis
 E_2=\frac{{\sf d_\text{in}}}{\mu_1^{(8R)}}\psi_1^{(8R)}
 &
 \text{for } s=0.
 \end{cases}
 \]
We now take
 \[
 {\sf d_\text{in}}=-\mu_1^{(8R)}\int_{0}^\infty
 e^{\mu_1^{(8R)}s'}
 \left( \chi_MVE_1(s'),\psi_1^{(8R)} \right)_{L_y^2(B_{8R})}ds'
 \]
and define ${\sf c}(s)$ by
 \[
 \begin{cases}
 \dis
 \frac{d{\sf c}}{ds}
 =
 -\mu_1^{(8R)}{\sf c}+\left( \chi_MVE_1(s),\psi_1^{(8R)} \right)_{L_y^2(B_{8R})},
 \\
 \dis
 {\sf c}(0)=\frac{{\sf d_\text{in}}}{\mu_1^{(8R)}}.
 \end{cases}
 \]
The function ${\sf c}(s)$ is explicitly given by
 \[
 {\sf c}(s)=-\int_s^\infty e^{-\mu_1^{(8R)}(s-s')}
 \left( \chi_MVE_1(s'),\psi_1^{(8R)} \right)_{L_y^2(B_{8R})}
 ds'.
 \]
From \eqref{6.6} and Lemma \ref{Lem6.2},
we easily see that
 \begin{equation}\label{6.13}
 |{\sf c}(s)|\lesssim R\lambda^\gamma,
 \hspace{10mm}
 |{\sf d_\text{in}}|\lesssim R\lambda(s)|_{s=0}^\gamma\lesssim RT^{4l+3}.
 \end{equation}
We decompose $E_2(y,s)$ as
 \[
 E_2=\nu+{\sf c}(s)\psi_1^{(8R)}.
 \]
The function $\nu(y,s)$ satisfies
 \begin{equation}\label{6.14}
 \begin{cases}
 \pa_s\nu
 =
 H_y\nu
 +
 V\chi_ME_1
 -
 \left( \chi_MVE_1,\psi_1^{(8R)} \right)_{L_y^2(B_{8R})}
 \psi_1^{(8R)}
 &
 \text{in } B_{8R}\times(0,\infty),
 \\
 \nu=0
 &
 \text{on } \pa B_{8R}\times(0,\infty),
 \\
 \nu=0
 &
 \text{for } s=0.
 \end{cases}
 \end{equation}
 \begin{lem}\label{Lem6.4}
 There exists $K_3>1$ independent of $R$, $\delta_0$, $\sigma$ such that
 \begin{align*}
 |\nu(y,s)|
 +
 |y|\cdot|\nabla_y\nu(y,s)|
 +
 |y|^2\cdot|\Delta_y\nu(y,s)|
 +
 |y|^3\cdot|\nabla_y\Delta_y\nu(y,s)|
 <
 \frac{K_3R^4\lambda^\gamma}{1+|y|^\frac{5}{2}}
 \end{align*}
for $(y,s)\in B_{2R}\times(0,\infty)$.
 \end{lem}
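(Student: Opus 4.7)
The plan is to combine a spectral energy estimate with a super-solution comparison in order to capture simultaneously the prefactor $R^4\lambda^\gamma$ and the polynomial spatial decay $|y|^{-5/2}$. The two key inputs are: by construction of the ODE defining ${\sf c}(s)$, the function $\nu(y,s)$ is $L^2$-orthogonal to $\psi_1^{(8R)}$ for every $s\geq 0$, so that $-H_y$ is coercive on the orbit of $\nu$ with constant $\mu_2^{(8R)}\gtrsim R^{-(n-2)}=R^{-3}$ by Lemma \ref{Lem3.3}; and the source $V\chi_M E_1$ is supported in $\{|y|\leq 2M\}$ with amplitude $\lesssim R\lambda^\gamma$ by Lemma \ref{Lem6.2}, while the projection correction decays exponentially by Lemma \ref{Lem3.1}.

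First I would obtain an $L^2$ decay estimate. Denoting by $F(y,s)$ the right-hand side of \eqref{6.14}, the structure just recalled gives $\|F(s)\|_{L^2(B_{8R})}\lesssim R\lambda(s)^\gamma$. Testing \eqref{6.14} against $\nu$ and exploiting the spectral gap yields
\[
\tfrac{d}{ds}\|\nu\|_{2}^{2}+2\mu_2^{(8R)}\|\nu\|_{2}^{2}\leq 2\|F\|_{2}\|\nu\|_{2}.
\]
Using $\mu_2^{(8R)}\gtrsim R^{-3}$, the slow variation of $\lambda(s)^\gamma$ in $s$ (a consequence of \eqref{6.11}), and Duhamel's formula, this produces $\|\nu(s)\|_{L^2(B_{8R})}\lesssim R^4\lambda(s)^\gamma$; the factor $R^4=R\cdot R^3$ comes from one copy of $R$ in $E_1$ and one copy of $1/\mu_2^{(8R)}$. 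Applying the local parabolic Schauder estimate (Lemma \ref{Lem3.5}) on unit balls covering $B_{4M}$ then upgrades this to $\|\nu(s)\|_{L^\infty(B_{4M})}\lesssim R^4\lambda^\gamma$, since $\|F\|_{L^\infty}\lesssim R\lambda^\gamma\ll R^4\lambda^\gamma$.

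For the spatial decay on $B_{2R}\setminus B_{2M}$, I would construct a radial super-solution of the form $\bar\nu(y,s)=CR^4\lambda(s)^\gamma|y|^{-5/2}$. In this region the source $V\chi_M E_1$ vanishes and the projection correction is bounded by $R\lambda^\gamma(1+|y|)^{-2}e^{-\sqrt{|\mu_1|}|y|}$, which is $\ll R^4\lambda^\gamma|y|^{-9/2}$ by \eqref{6.7}. A direct computation in $n=5$ gives $-\Delta|y|^{-5/2}=\tfrac{5}{4}|y|^{-9/2}$ and $V(y)|y|^{-5/2}=O(|y|^{-13/2})$; moreover the $s$-derivative contribution $\gamma(\lambda_s/\lambda)\lambda^\gamma|y|^{-5/2}$ is harmless thanks to the scaling $T=e^{-R}$ and the bounds on $\lambda$ in \eqref{6.6}. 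Hence $\pa_s\bar\nu\geq H_y\bar\nu+|F|$ on $B_{2R}\setminus B_{2M}$ once $C$ is chosen large, and matching against the $L^\infty$ bound on $\pa B_{2M}$ (together with the fact that $\bar\nu$ dominates $\nu$ on $\pa B_{2R}$, which is where the reduction from $B_{8R}$ to the smaller ball $B_{2R}$ is crucial) yields $|\nu(y,s)|\lesssim R^4\lambda^\gamma/(1+|y|^{5/2})$ on $B_{2R}\times(0,\infty)$ by the parabolic maximum principle.

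For the derivatives I would rerun the parabolic rescaling already employed in the proof of Lemma \ref{Lem6.3}: fix $(y,s)$ with $\rho=|y|\geq 1$ and set $\tilde\nu(Y,S)=\nu\bigl(y+\tfrac{\rho}{3}Y,\,s+\tfrac{\rho^2}{9}(S-1)\bigr)$. Then $\tilde\nu$ solves a uniformly parabolic equation on $B_1\times(0,1)$ with bounded coefficients and right-hand side of order $R^4\lambda^\gamma\rho^{-5/2}$ (the contribution of $F$ being negligible for $\rho>2M$), so iterating Lemma \ref{Lem3.5} three times produces bounds on $\nabla_Y\tilde\nu$, $\Delta_Y\tilde\nu$, $\nabla_Y\Delta_Y\tilde\nu$; scaling back yields the claimed factors $\rho^{-7/2}$, $\rho^{-9/2}$, $\rho^{-11/2}$. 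The step I expect to be the main obstacle is verifying the super-solution inequality uniformly on $B_{2R}$: one must confirm that the $\lambda_s/\lambda$ contribution, the exponentially small projection correction, and the boundary values at $|y|=2R$ can all be absorbed by the leading dissipative term $\tfrac{5}{4}R^4\lambda^\gamma|y|^{-9/2}$, which is precisely why the lemma is stated on $B_{2R}$ rather than on the full ball $B_{8R}$.
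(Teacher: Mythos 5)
Your overall strategy is the same as the paper's: use the $L^2$-orthogonality to $\psi_1^{(8R)}$ together with the spectral gap $\mu_2^{(8R)}\gtrsim R^{-3}$ from Lemma \ref{Lem3.3} to obtain $\|\nu(s)\|_{L^2(B_{8R})}\lesssim R^4\lambda^\gamma$, upgrade to $L^\infty$ via Lemma \ref{Lem3.5}, then build a super-solution $KR^4\lambda^\gamma|y|^{-5/2}$ in the region where the source vanishes, and finally iterate the parabolic rescaling of the Lemma \ref{Lem6.3} proof for the derivative bounds. The factor $R^4 = R\cdot R^3$ and the verification that $V$, the $\lambda_s/\lambda$ term, and the $\psi_1^{(8R)}$ projection tail are all dominated by $\tfrac{5}{4}|y|^{-2}$ via \eqref{6.7} and $T=e^{-R}$ are all as in the paper.

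However, the comparison step as you set it up has a real gap. You run the maximum principle on $B_{2R}\setminus B_{2M}$ and claim ``$\bar\nu$ dominates $\nu$ on $\pa B_{2R}$.'' There is no a priori bound on $\pa B_{2R}$ that gives this: from the $L^\infty$ estimate you only know $|\nu|\lesssim R^4\lambda^\gamma$ there, which is \emph{not} controlled by $\bar\nu|_{\pa B_{2R}}\sim R^4\lambda^\gamma(2R)^{-5/2}\sim R^{3/2}\lambda^\gamma$. The comparison must instead be carried out on $B_{8R}\setminus B_{2M}$, where the outer boundary condition $\nu=0$ on $\pa B_{8R}$ makes the super-solution trivially larger; the inner boundary $\pa B_{2M}$ is handled by the $L^\infty$ bound just as you say. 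This also corrects your concluding remark: the restriction to $B_{2R}$ in the statement is \emph{not} forced by the comparison argument for $\nu$ itself, which works all the way up to $\pa B_{8R}$. It comes from the three successive parabolic rescalings $(Y,S)\mapsto(y+\tfrac{\rho}{3}Y,\,s+\tfrac{\rho^2}{9}(S-1))$ needed to reach $\nabla_y\Delta_y\nu$: each application consumes roughly $2R$ of domain ($B_{8R}\to B_{6R}\to B_{4R}\to B_{2R}$). Finally, to run the rescaled Schauder estimate near $s=0$ you need to extend $\nu$ by $0$ to $s<0$ (as the paper does with $\bar e_1$ in Lemma \ref{Lem6.3}); you should make this explicit.
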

\begin{proof}
Since $(\nu(s),\psi_1^{(8R)})_{L_y^2(B_{8R})}=0$ for $s\in(0,\infty)$,
from Lemma \ref{Lem3.3},
there exists $k>0$ such that
 \[
 (H_y\nu(s),\nu(s))_{L_y^2(B_{8R})}
 <
 -\frac{k}{R^3}\|\nu(s)\|_{L_y^2(B_{8R})}
 \qquad\text{for } s\in(0,\infty).
 \]
From this estimate and Lemma \ref{Lem6.2},
we get
 \begin{align*}
 e^{\frac{k}{R^3}s}\|\nu\|_{L_y^2(B_{8R})}^2
 &\lesssim
 R^5\|\chi_MV\|_{L_y^2(B_{8R})}^2
 \int_0^se^{\frac{k}{R^3}s'}
 \lambda(s')^{2\gamma}ds'.
 \end{align*}
We calculate the integral using Lemma \ref{Lem6.1} and \eqref{6.5}.
 \begin{align*}
 \int_0^s
 e^{\frac{k}{R^3}s'}
 \lambda(s')^{2\gamma}ds'
 &<
 R^3
 e^{\frac{k}{R^3}s}
 \lambda(s)^{2\gamma}
 -
 2\gamma R^3
 \int_0^s
 e^{\frac{k}{R^3}s'}
 \lambda^{2\gamma-1}\frac{d\lambda}{ds}ds'
 \\
 &=
 R^3 e^{\frac{k}{R^3}s}
 \lambda(s)^{2\gamma}
 -
 2\gamma R^3
 \int_0^s
 e^{\frac{k}{R^3}s'}
 \lambda^{2\gamma+1}\frac{d\lambda}{dt}ds'
 \\
 &\lesssim
 R^3e^{\frac{k}{R^3}s}
 \lambda(s)^{2\gamma}
 +
 R^3
 \int_0^s
 e^{\frac{k}{R^3}s'}
 \lambda^{2\gamma+1}\lambda^\frac{2l+1}{2l+2}ds'
 \\
 &\lesssim
 R^3e^{\frac{k}{R^3}s}
 \lambda(s)^{2\gamma}
 +
 R^3\lambda(s)|_{s=0}^\frac{4l+3}{2l+2}
 \int_0^s
 e^{\frac{k}{R^3}s'}
 \lambda^{2\gamma}ds'.
 \end{align*}
Since $\lambda(s)|_{s=0}=\lambda(t)|_{t=0}\lesssim T^{2l+2}$ and $T=e^{-R}$,
it follows that
$R^3\lambda(s)|_{s=0}^\frac{4l+3}{2l+2}<\frac{1}{2}$.
Therefore we obtain
 \begin{align*}
 \int_0^s
 e^{\frac{k}{R^3}s'}
 \lambda(s')^{2\gamma}ds'
 \lesssim
 R^3e^{\frac{k}{R^3}s}\lambda(s)^{2\gamma}.
 \end{align*}
As a consequence,
we deduce that
 \[
 \|\nu(s)\|_{L_y^2(B_{8R})}\lesssim R^4\lambda^\gamma.
 \]
Applying a local parabolic estimate in \eqref{6.14},
we get from \eqref{6.11} that
 \begin{align}\label{6.15}
 \|\nu(s)\|_{L_y^\infty(B_{8R})}
 \lesssim
 R^4\lambda^\gamma.
 \end{align}
We now check that $KR^4\lambda^\gamma|y|^{-\frac{5}{2}}$
becomes a super solution for $y\in B_{8R}\setminus B_{2M}$.
From \eqref{6.7}, Lemma \ref{Lem6.1} and \eqref{6.5},
we see that
 \begin{align*}
 (\pa_s-H_y)
 \left(
 \frac{\lambda^\gamma}{|y|^{\frac{5}{2}}}
 \right)
 &=
 \left(
 \frac{\gamma}{\lambda}\frac{d\lambda}{dt}\frac{dt}{ds}
 +
 \frac{5}{4|y|^2}
 -
 V(y)
 \right)
 \left( \frac{\lambda^\gamma}{|y|^{\frac{5}{2}}} \right)
 \nonumber
 \\
 &>
 \left(
 \gamma\lambda\frac{d\lambda}{dt}
 +
 \frac{5}{8|y|^2}
 \right)
 \left( \frac{\lambda^\gamma}{|y|^{\frac{5}{2}}} \right)
 \nonumber
 \\
 &>
 \left(
 -(4l+4)\gamma\alpha_l^4T^{4l+3}+\frac{5}{8|y|^2}
 \right)
 \left( \frac{\lambda^\gamma}{|y|^{\frac{2}{2}}} \right)
 \nonumber
 \end{align*}
for $y\in B_{8R}\setminus B_{2M}$.
Since $T=e^{-R}$,
we note that $(4l+4)\gamma\alpha_l^4T^{4l+3}<\frac{5}{16|y|^2}$ for $y\in B_{8R}$.
Therefore
we get
 \begin{align*}
 (\pa_s-H_y)
 \left(
 \frac{KR^4\lambda^\gamma}{|y|^{\frac{5}{2}}}
 \right)
 >
 KR^4
 \left(
 \frac{5}{16|y|^2}
 \right)
 \left( \frac{\lambda^\gamma}{|y|^\frac{5}{2}} \right)
 \qquad\text{for } y\in B_{8R}\setminus B_{2M}.
 \end{align*}
Furthermore
from Lemma \ref{Lem3.1}, \eqref{6.7} and Lemma \ref{Lem6.2},
we see that
 \[
 \left( V\chi_ME_1,\psi_1^{(8R)} \right)_{L_y^2(B_{8R})}
 \psi_1^{(8R)}(y)
 \lesssim
 R\lambda^\gamma\frac{e^{-\sqrt{|\mu_1|}\cdot|y|}}{|y|^2}
 \lesssim
 \frac{R\lambda^\gamma}{|y|^{2+\frac{5}{2}}}
 \qquad \text{for } y\in B_{8R}\setminus B_{2M}.
 \]
Therefore
it holds that if $K\gg1$
 \begin{align*}
 (\pa_s-H_y)
 \left(
 \frac{KR^4\lambda^\gamma}{|y|^{\frac{5}{2}}}
 \right)
 >
 \left( V\chi_ME_1,\psi_1^{(8R)} \right)_{L_y^2(B_{8R})}
 \psi_1^{(8R)}
 \qquad \text{for } y\in B_{8R}\setminus B_{2M}.
 \end{align*}
Combining this estimate and \eqref{6.15},
by a comparison argument in \eqref{6.14},
we obtain
 \[
 |\nu(y,s)|
 <
 \frac{KR^4\lambda^\gamma}{|y|^{\frac{5}{2}}}
 \qquad \text{for } (y,s)\in B_{8R}\setminus B_{2M}\times(0,\infty).
 \]
By the same scaling argument as in the proof of Lemma \ref{Lem6.3},
we get
 \[
 |\nabla_y\nu(y,s)|
 \lesssim
 \frac{R^4\lambda^\gamma}{|y|^{\frac{7}{2}}}
 \qquad \text{for } (y,s)\in B_{6R}\setminus B_{2M}\times(0,\infty).
 \]
Next we consider the equation for $\pa_{y_i}\nu(y,s)$.
We again use the same scaling argument as above to get
 \[
 |\pa_{y_i}\nu(y,s)|+|y|\cdot|\nabla_y\pa_{y_i}\nu(y,s)|
 \lesssim
 \frac{R^4\lambda^\gamma}{|y|^{\frac{7}{2}}}
 \qquad \text{for } (y,s)\in B_{4R}\setminus B_{2M}\times(0,\infty).
 \]
We finally consider the equation for $\pa_{y_j}\pa_{y_i}\nu(y,s)$ and obtain
 \[
 |\pa_{y_j}\pa_{y_i}\nu(y,s)|+|y|\cdot|\nabla_y\pa_{y_j}\pa_{y_i}\nu(y,s)|
 \lesssim
 \frac{R^4\lambda^\gamma}{|y|^{\frac{9}{2}}}
 \qquad \text{for } (y,s)\in B_{2R}\setminus B_{2M}\times(0,\infty).
 \]
Since the constant $M$ is independent of $R$,
the proof is completed.
\end{proof}
We now put
 \[
 \epsilon
 =
 -H_yE
 =
 -H_y\left( E_1+\nu+{\sf c}(s)\psi_1^{(8R)} \right).
 \]
Since $-H_yg=G_\text{in}$ for $|y|<2R$,
it is clear that $\epsilon(y,s)$ satisfies
 \[
 \begin{cases}
 \pa_s\epsilon
 =
 H_y\epsilon
 +
 G_\text{in}
 &
 \text{in } B_{2R}\times(0,\infty),
 \\
 \dis
 \epsilon={\sf d_\text{in}}\psi_1^{(8R)}
 &
 \text{for } s=0.
 \end{cases}
 \]
From Lemma \ref{Lem6.2} - Lemma \ref{Lem6.4} and \eqref{6.13},
we conclude
 \begin{align}\label{6.16}
 |\epsilon(y,s)|+|y|\cdot|\nabla_y\epsilon(y,s)|
 &\lesssim
 \frac{\lambda^\gamma}{1+|y|}
 +
 \frac{R\lambda^\gamma}{1+|y|^2}
 +
 \frac{R^4 \lambda^\gamma}{1+|y|^{\frac{9}{2}}}
 +
 \frac{R\lambda^\gamma e^{-\sqrt{|\mu_1|}\cdot|y|}}{1+|y|^2}
 \nonumber
 \\
 &\lesssim
 \frac{R^4 \lambda^\gamma}{1+|y|^{\frac{9}{2}}}
 \qquad \text{for } (y,s)\in B_{2R}\times(0,\infty).
 \end{align}

\section{Outer solution}
\label{Sec7}
We now handle the outer solution $W(x,t)$.
A goal of this section is to show $W(x,t)\in X_\sigma$.
We recall that $W(x,t)\in X_\sigma$ is defined by
 \[
 W(x,t)
 \leq
 \begin{cases}
 \delta_0(T-t)^l\left( 1+|z|^{2l+2} \right)
 & \text{for } |z|<(T-t)^{-\frac{l}{2l+2}},
 \\[1mm] \dis
 \frac{\delta_0}{1+|x|^2} & \text{for } |z|>(T-t)^{-\frac{l}{2l+2}},
 \end{cases}
 \qquad
 z=\frac{x}{\sqrt{T-t}}.
 \]
The case $l=0$ is treated in \cite{delPino3}.
We here derive more elaborate decay estimates for the case $l\geq1$
by using the method in \cite{Herrero1,Herrero2,Mizoguchi,Seki}.
Throughout this section,
$\tilde{w}(x,t)\in C(\R^5\times[0,T])$
represents an extension of $w(x,t)\in X_\sigma$ defined in Section \ref{Sec5.2},
and
$(\lambda(t),\epsilon(y,t))$ represents a pair of functions obtained
in Section \ref{Sec6}.

\subsection{Choice of parameters}
\label{Sec7.1}
In this section,
we consider
 \begin{equation}\label{7.1}
 \begin{cases}
 W_t
 =
 \Delta_xW+G_\text{out}(\lambda,\tilde{w},\epsilon)
 &
 \text{in } \R^5\times(0,T),
 \\
 W = ({\bf d}\cdot {\bf e})\chi_\text{out}
 &
 \text{for } t=0,
 \end{cases}
 \end{equation}
where
${\bf d}
 =({\sf d}_0,{\sf d}_1,\cdots,{\sf d}_l)\in\R^{l+1}$
is a parameter and
 \[
 {\bf e}=(e_0(z),e_1(z),\cdots,e_l(z)),
 \qquad
 z=\frac{x}{\sqrt{T-t}}.
 \]
We recall that $e_i(z)$ is the eigenfunction defined in Section \ref{Sec3.5}.
We introduce a self-similar transformation.
 \[
 \varphi(z,\tau)=W(z\sqrt{T-t},t),
 \hspace{10mm}
 T-t=e^{-\tau}.
 \]
The function $\varphi(z,\tau)$ solves
 \begin{equation}\label{7.2}
 \begin{cases}
 \varphi_\tau
 =
 A_z\varphi+e^{-\tau}G_\text{out}(\lambda,\tilde{w},\epsilon)
 &
 \text{in } \R^5\times(\tau_0,\infty),
 \\
 \varphi = ({\bf d}\cdot{\bf e})\chi_\text{out}
 &
 \text{for } \tau=\tau_0=-\log T.
 \end{cases}
 \end{equation}
We decompose the initial data to the subspace $Y_l=$span$\{e_0,e_1,\cdots,e_l\}$
and its orthogonal complement in $L_\rho^2(\R^5)$.
 \[
 ({\bf d}\cdot{\bf e})\chi_\text{out}
 =
 \sum_{j,k=0}^l\frac{{\sf d}_j}{\kappa_k^2}(e_j\chi_\text{out},e_k)e_k
 +
 \{({\bf d}\cdot{\bf e})\chi_\text{out}\}^\bot,
 \qquad
 \kappa_j=\sqrt{(e_j,e_j)_\rho}.
 \]
We define $\Phi(z,\tau)$ as
 \[
 \varphi={\bf b}(\tau)\cdot{\bf e}+\Phi,
 \]
where
${\bf b}(\tau)=({\sf b}_0(\tau),{\sf b}_1(\tau),\cdots,{\sf b}_l(\tau))\in\R^{l+1}$.
We easily see that $\Phi(z,\tau)$ satisfies
 \begin{equation}\label{7.3}
 \begin{cases}
 \dis
 \Phi_\tau
 =
 A_z\Phi+e^{-\tau}G_\text{out}(\lambda,\tilde{w},\epsilon)
 -
 \sum_{k=0}^lk{\sf b}_ke_k
 -
 \sum_{k=0}^l
 \frac{d{\sf b}_k}{d\tau}e_k
 &\dis
 \text{in } \R^5\times(\tau_0,\infty),
 \\ \dis
 \Phi =
 \left(
 \sum_{j,k=0}^l
 \frac{{\sf d}_j}{\kappa_k^2}(e_j,\chi_\text{out}e_k)
 -
 \sum_{k=0}^l
 {\sf b}_k(\tau_0)
 \right)e_k
 +\{({\bf d}\cdot{\bf e})\chi_\text{out}\}^\bot
 &
 \text{for } \tau=\tau_0.
 \end{cases}
 \end{equation}
To obtain a solution $\Phi(z,\tau)$ satisfying
$\|\Phi(\tau)\|_\rho=o(e^{-l\tau})$,
we choose ${\bf b}(\tau)$ as
 \[
 {\sf b}_k(\tau)
 =
 -e^{-k\tau}
 \int_\tau^\infty
 \frac{e^{(k-1)\tau'}}{\kappa_k^2}
 (G_\text{out}(\lambda,\tilde{w},\epsilon),e_k)_\rho
 d\tau'.
 \]
From Lemma \ref{Lem7.1},
we verify that
 \begin{align*}
 |(G_\text{out}&(\lambda,\tilde{w},\epsilon),e_k)_\rho|
 \lesssim
 |(G_\text{out}(\lambda,\tilde{w},\epsilon),e_k)_{L_\rho^2(|z|<1)}|
 +
 e^{-2l\tau}
 \\
 &\lesssim
 \frac{e^{-l\tau}}{\lambda^2R^\frac{1}{4}}
 \left(
 {\bf 1}_{|y|<2R}
 +
 \frac{{\bf 1}_{|y|>2R}}{|y|^\frac{11}{4}},
 |e_k|
 \right)_{L_\rho^2(|z|<1)}
 +
 e^{-pl\tau}
 +
 e^{-2l\tau}
 \\
 &\lesssim
 R^\frac{19}{4}\lambda^3e^{-(l-\frac{5}{2})\tau}
 +
 R^{-\frac{1}{4}}\lambda^\frac{3}{4}
 e^{-(l-\frac{11}{8})\tau}
 +
 e^{-2l\tau}
 \\
 &\lesssim
 e^{-2l\tau}.
 \end{align*}
This implies
 \begin{equation}\label{7.4}
 |{\sf b}_k(\tau)|
 \lesssim
 e^{-(2l+1)\tau}.
 \end{equation}
From definition,
the parameter ${\bf b}(\tau)$ gives a solution of
 \begin{align*}
 \frac{d{\sf b}_k}{d\tau}
 =
 -k{\sf b}_k
 +
 \frac{e^{-\tau}}{\kappa_k^2}
 (G_\text{out}(\lambda,\tilde{w}(x,t),\epsilon(y,t)),e_k)_\rho.
 \end{align*}
We take
${\bf d}=({\sf d}_0,{\sf d}_1,\cdots,{\sf d}_l)$ as
 \[
 (\text{id}+{\sf C}){\bf d}={\bf b}(\tau_0),
 \]
where ${\sf C}$ is a constant $(l+1)\times(l+1)$ matrix defined by
 \[
 {\sf C}_{kj}=\frac{1}{\kappa_k^2}(e_j,(1-\chi_\text{out})|_{\tau=\tau_0}e_k)_\rho.
 \]
Since $\chi_\text{out}=\chi(\frac{|z|}{e^{B\tau}})$ with
$B=\frac{l+\frac{1}{2}}{2l+2}$,
we easily see that $|{\sf C}_{kj}|\ll1$,
Therefore we get from \eqref{7.4} that
 \begin{equation}\label{7.5}
 |{\bf d}|
 \lesssim
 |{\bf b}(\tau_0)|
 \lesssim
 e^{-(2l+1)\tau_0}.
 \end{equation}
By the choice of ${\bf d}$ and ${\bf b}(\tau)$,
the equation\eqref{7.3} is rewritten as
 \begin{equation}\label{7.6}
 \begin{cases}
 \dis
 \Phi_\tau
 =
 A_z\Phi+e^{-\tau}G_\text{out}^\bot
 &\dis
 \text{in } \R^5\times(\tau_0,\infty),
 \\ \dis
 \Phi =
 \{({\bf d}\cdot{\bf e})\chi_\text{out}\}^\bot
 &
 \text{for } \tau=\tau_0,
 \end{cases}
 \end{equation}
where
 $G_\text{out}^\bot
 =
 G_\text{out}(\lambda,\tilde{w},\epsilon)
 -
 \sum_{k=0}^l
 \frac{e^{-\tau}}{\kappa_k^2}
 (G_\text{out}(\lambda,\tilde{w},\epsilon),e_k)_\rho e_k$.

\subsection{Estimate of $G_\text{out}$}
\label{Sec7.2}
We here provide the estimate of $G_\text{out}$.
From \eqref{5.2}, \eqref{5.4} and \eqref{5.7},
we recall that
\begin{align*}
 G_\text{out}(\lambda,\tilde{w},\epsilon)
 &=
 h_\text{out}+h_\text{in}
 +
 \frac{1}{\lambda^2}(1-\chi_\text{in})V(y)(\Theta\chi_\text{out}+\tilde{w})
 +
 \frac{\lambda_t}{\lambda^{\frac{5}{2}}}
 (1-\chi_\text{in})\Lambda_y{\sf Q}(y)
 +
 {\sf N}(v),
 \end{align*}
 where
 \begin{align*}
 &h_\text{out}
 =
 2\nabla_x\Theta\cdot\nabla_x\chi_\text{out}
 +
 \Theta\Delta_x\chi_\text{out}
 -
 \Theta\pa_t\chi_\text{out},
 \\
 &h_\text{in}
 =
 \frac{1}{\lambda^{\frac{7}{2}}}
 \left(
 2\nabla_y\epsilon\cdot\nabla_y\chi_\text{in}+\epsilon\Delta_y\chi_\text{in}
 \right)
 +
 \frac{\lambda_t}{\lambda^{\frac{5}{2}}}\Lambda_y\epsilon\chi_\text{in}
 -
 \frac{1}{\lambda^\frac{3}{2}}\epsilon\pa_t\chi_\text{in},
 \\
 &\chi_\text{in}
 =
 \chi\left( \frac{|y|}{R} \right),
 \\
 &\chi_\text{out}
 =
 \chi\left( \frac{|z|}{e^{B\tau}} \right) \quad \text{with} \quad
 B=\frac{l+\frac{1}{2}}{2l+2},
 \\
 &{\sf N}(v)
 =
 f({\sf Q}_\lambda+\Theta\chi_\text{out}+\epsilon_\lambda\chi_\text{in}+\tilde{w})
 -
 f({\sf Q}_\lambda)
 -
 f'({\sf Q}_\lambda)
 (\Theta\chi_\text{out}+\epsilon_\lambda\chi_\text{in}+\tilde{w})
 \end{align*}
 and
 \[
 y=\frac{x}{\lambda},
 \hspace{10mm}
 z=\frac{x}{\sqrt{T-t}},
 \hspace{10mm}
 \lambda\sim \alpha_l^2(T-t)^{2l+2}.
 \]
Let ${\bf 1}_{z\in\Omega}(z)$ be a function on $\R^5$ defined by
${\bf 1}_{z\in\Omega}(z)=1$ if $z\in\Omega$
and
${\bf 1}_{z\in\Omega}(z)=0$ if $z\not\in\Omega$.
 \begin{lem}\label{Lem7.1}
 Let $w\in X_\sigma$ and
 $(\tilde w(x,t),\lambda(t),\epsilon(y,t))$ be given in Section {\rm\ref{Sec6}}.
 Then
 \begin{align*}
 |G_\text{\rm out}& (\lambda,\tilde{w},\epsilon)|
 \lesssim
 \begin{cases}
 \dis
 \quad
 \frac{e^{-l\tau}}{\lambda^2}
 \frac{1}{R^\frac{1}{4}}
 \left(
 \frac{{\bf 1}_{|y|<2R}}{1+|y|^\frac{9}{4}}
 +
 \frac{1}{1+|y|^\frac{11}{4}}
 \right)
 +
 e^{-pl\tau}
 &
 \text{\rm for}\ |z|<1,
 \\[4mm]
 \quad
 \dis
 e^{-2l\tau}
 |z|^{4l+4}
 &
 \text{\rm for}\ 1<|z|<e^{\frac{l\tau}{2l+2}},
 \\[2mm]
 \quad
 \dis
 e^{-(l-\frac{1}{2l+2})\tau}
 |z|^{2l+2}
 {\bf 1}_{e^\frac{(l+\frac{1}{2})\tau}{2l+2}<|z|<2e^\frac{(l+\frac{1}{2})\tau}{2l+2}}
 +
 1
 &
 \text{\rm for}\ e^{\frac{l\tau}{2l+2}}<|z|<e^{\frac{\tau}{2}},
 \\[4mm]
 \quad
 \dis
 \frac{e^{-(3l+2)\tau}}{|x|^3}
 +
 \frac{\delta_0^2}{|x|^{2p}}
 &
 \text{\rm for}\ |x|>1.
 \end{cases}
 \end{align*}
 \end{lem}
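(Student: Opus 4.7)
The plan is to decompose $G_{\text{out}}$ into its five constituents from (5.2), (5.4), and (5.7)---namely $h_{\text{out}}$, $h_{\text{in}}$, the $V$-weighted outer piece, the $\Lambda_y{\sf Q}$ piece, and the nonlinear remainder ${\sf N}(v)$---and estimate each in each of the four regions. The main inputs will be the pointwise bound on $\epsilon$ in (6.16), the asymptotics of $\lambda$ and $\lambda_t$ from Lemma \ref{Lem6.1}, the decay $V(y)\lesssim(1+|y|)^{-4}$ and $\Lambda_y{\sf Q}(y)\lesssim(1+|y|)^{-3}$ coming from the explicit form of the ground state, and the pointwise control $|\tilde w|\leq \delta_0\mathcal{W}$ from (5.9). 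A preliminary observation: derivatives of $\chi_{\text{in}}$ live on $|y|\sim R$, i.e.\ $|x|\sim\lambda R\sim(T-t)^{2l+2}R$, which lies deep inside region (i); derivatives of $\chi_{\text{out}}$ live on $|z|\sim e^{B\tau}$, which lies inside region (iii). Hence $h_{\text{in}}$ contributes only to region (i) and $h_{\text{out}}$ only to region (iii).

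For region (i) I would insert (6.16) into $h_{\text{in}}$, which localises near $|y|\sim R$ and gives a bound of order $R^{-1/4}e^{-l\tau}\lambda^{-2}(1+|y|)^{-9/4}\mathbf{1}_{|y|<2R}$; the $R^{-1/4}$ gain comes from trading a quarter of the decay in the weight $R^4\lambda^\gamma/(1+|y|)^{9/2}$ against a power of $R$. The $V$-weighted $\Theta$ term uses $\Theta\sim e^{-l\tau}$ for $|z|<1$ together with $V(y)\lesssim R^{-1/4}(1+|y|)^{-11/4}$ on $|y|\geq R$ to produce the second tail, and the $\Lambda_y{\sf Q}$ piece is absorbed similarly. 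For ${\sf N}(v)$ I would expand $f(\mathsf{Q}_\lambda+v)-f(\mathsf{Q}_\lambda)-f'(\mathsf{Q}_\lambda)v = O(\mathsf{Q}_\lambda^{p-2}v^2)+O(|v|^p)$: where $v$ is subordinate to $\mathsf{Q}_\lambda$ the quadratic piece is controlled by the earlier tails, while on the complement $|v|^p$ is dominated by $|\Theta|^p\sim e^{-pl\tau}$.

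Region (ii) is simplest: both cut-offs equal $1$ and $\epsilon$ is off, leaving only ${\sf N}(v)$. Since $\Theta$ dominates $\mathsf{Q}_\lambda$ there, $|{\sf N}(v)|\lesssim|\Theta|^p\sim(e^{-l\tau}|z|^{2l})^p$; the upper constraint $|z|<e^{l\tau/(2l+2)}$ is precisely what is needed to absorb this into $e^{-2l\tau}|z|^{4l+4}$. In region (iii) I would compute $h_{\text{out}}$ directly on the annulus $|z|\sim e^{B\tau}$ using $\Theta\sim e^{-l\tau}|z|^{2l}$, $\nabla\Theta\sim(T-t)^{l-1/2}|z|^{2l-1}$, $\Delta\chi_{\text{out}}$ of size $(T-t)^{-2B}$, and $\pa_t\chi_{\text{out}}$ of size $(T-t)^{-1}$, which produces the annular summand with prefactor $e^{-(l-\frac{1}{2l+2})\tau}$; off that annulus ${\sf N}(v)$ again dominates and the bound $1$ suffices because $(T-t)|z|^2\lesssim1$. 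For region (iv) both cut-offs vanish, leaving $\lambda^{-2}V(y)\tilde w$, $(\lambda_t/\lambda^{5/2})\Lambda_y{\sf Q}$, and ${\sf N}(v)$; inserting $V(y)\lesssim\lambda^4/|x|^4$, $\Lambda_y{\sf Q}\lesssim\lambda^3/|x|^3$, and Lemma \ref{Lem6.1} yields $e^{-(3l+2)\tau}/|x|^3$, while $|{\sf N}(v)|\lesssim\delta_0^p/|x|^{2p}$ follows from $|\tilde w|\leq\delta_0/|x|^2$ and the fact that $\tilde w$ dominates the other summands at spatial infinity.

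The main obstacle will be the nonlinear remainder ${\sf N}(v)$: in each region one must decide whether $v$ is subordinate or dominant relative to $\mathsf{Q}_\lambda$ and use the corresponding Taylor estimate, then carefully match the exact exponents in the statement---in particular the crossover between $e^{-pl\tau}$ and $e^{-2l\tau}|z|^{4l+4}$ near $|z|=1$, and the borderline $e^{-(3l+2)\tau}|x|^{-3}$ behaviour at spatial infinity, both of which demand that the ``correct'' summand of the Taylor expansion is identified on each subregion. Once this is done, the remaining steps are a direct insertion of (6.16) and Lemma \ref{Lem6.1} into the explicit expressions for $h_{\text{in}}$, $h_{\text{out}}$, and the $V$-, $\Lambda_y{\sf Q}$-weighted pieces.
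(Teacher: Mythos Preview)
Your plan is essentially the paper's proof: the paper also splits $G_{\text{out}}$ into the five pieces $h_{\text{out}}$, $h_{\text{in}}$, the $(1-\chi_{\text{in}})V(\Theta\chi_{\text{out}}+\tilde w)$ term, the $(1-\chi_{\text{in}})\Lambda_y{\sf Q}$ term, and ${\sf N}(v)$, and bounds each using exactly the inputs you list---(6.16), Lemma~\ref{Lem6.1}, the explicit decay of $V$ and $\Lambda_y{\sf Q}$, and $|\tilde w|\le\delta_0\mathcal W$. The only organisational difference is that the paper estimates term by term across all four regions at once (equations (7.7)--(7.12)) rather than region by region.

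Two small corrections to your sketch. First, in region~(ii) it is not true that ``only ${\sf N}(v)$'' remains: since $1-\chi_{\text{in}}=1$ there, the $V$-weighted term and the $\Lambda_y{\sf Q}$ term are present; they are simply subdominant (the paper checks this explicitly in (7.9)--(7.10), obtaining contributions of size $e^{-(5l+2)\tau}|z|^{2l-2}$ and $e^{-(3l+1/2)\tau}$, both absorbed into $e^{-2l\tau}|z|^{4l+4}$). Second, for ${\sf N}(v)$ the paper keeps both Taylor pieces $f''({\sf Q}_\lambda)v^2$ and $|v|^p$ throughout and, in regions~(ii)--(iii), replaces $|v|^p$ by $|v|^2+|v|^3$ via the elementary bound $|\xi|^p\lesssim|\xi|^2+|\xi|^3$ for $2<p<3$; this is how the exponent $4l+4$ (rather than $2lp$) appears cleanly. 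Your direct $|\Theta|^p$ bound also works after a short computation, so this is a matter of bookkeeping, not a gap.
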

 \begin{proof}
Since $\Theta(x,t)=-e^{-l\tau}e_l(z)$ and $B=e^\frac{l+\frac{1}{2}}{2l+2}$,
we see that
 \begin{align}\label{7.7}
 |h_\text{out}|
 &\lesssim
 \left| \frac{\pa z_j}{\pa x_i} \right|
 \cdot
 |\nabla_z\Theta\cdot\nabla_z\chi_\text{out}|
 +
 \left| \frac{\pa z_j}{\pa x_i} \right|^2
 \cdot
 |\Theta\Delta_z\chi_\text{out}|
 +
 \left| \frac{d\tau}{dt} \right|
 \cdot
 |\Theta\pa_\tau\chi_\text{out}|
 \nonumber
 \\ 
 &\lesssim
 e^\tau e^{-l\tau}|z|^{2l}
 {\bf 1}_{e^{B\tau}<|z|<2e^{B\tau}}
 \nonumber
 \\
 &\lesssim
 e^{-(l-\frac{1}{2l+2})\tau}|z|^{2l+2}
 {\bf 1}_{e^{B\tau}<|z|<2e^{B\tau}}.
 \end{align}
We next estimate $h_\text{in}$.
Since
$\frac{e^{-(4l+3)\tau}}{\lambda^\frac{3}{2}}\sim e^{-l\tau}$
and
$|\lambda\lambda_t|\sim e^{-(4l+3)\tau}$
(see Lemma \ref{Lem6.1}),
we get from \eqref{6.16} that
 \begin{align*}
 |h_\text{in}|
 &\lesssim
 \frac{1}{\lambda^{\frac{7}{2}}}
 \left(
 \frac{|\nabla_y\epsilon|}{R}
 +
 \frac{|\epsilon|}{R^2}
 \right)
 {\bf 1}_{R<|y|<2R}
 +
 \frac{|\lambda_t|}{\lambda^\frac{5}{2}}
 |\Lambda_y\epsilon|\chi_\text{in}
 +
 \frac{1}{\lambda^\frac{3}{2}}
 |\epsilon|
 \frac{\lambda_t}{\lambda}
 {\bf 1}_{R<|y|<2R}
 \nonumber
 \\
 &\lesssim
 \frac{1}{\lambda^2}
 \frac{1}{\lambda^{\frac{3}{2}}}
 \left(
 \frac{|\nabla_y\epsilon|}{R}
 +
 \frac{|\epsilon|}{R^2}
 +
 |\lambda\lambda_t|\cdot|\epsilon|
 \right)
 {\bf 1}_{R<|y|<2R}
 +
 \frac{1}{\lambda^2}
 \frac{|\lambda\lambda_t|}{\lambda^\frac{3}{2}}
 |\Lambda_y\epsilon|\chi_\text{in}
 \nonumber
 \\
 &\lesssim
 \frac{e^{-l\tau}}{\lambda^2}
 \left(
 \frac{1}{R^\frac{5}{2}}
 +
 \frac{e^{-(4l+3)\tau}}{R^\frac{1}{2}}
 \right)
 {\bf 1}_{R<|y|<2R}
 +
 \frac{e^{-l\tau}}{\lambda^2}
 \frac{R^4e^{-(4l+3)\tau}}{1+|y|^\frac{9}{2}}
 {\bf 1}_{|y|<2R}.
 \end{align*}
 Therefore
 since $R=\tau_0$,
 we deduce that
 \begin{align}\label{7.8}
 |h_\text{in}|
 &\lesssim
 \frac{e^{-l\tau}}{\lambda^2}
 \frac{1}{R^\frac{5}{2}} 
 {\bf 1}_{R<|y|<2R}
 +
 \frac{e^{-l\tau}}{\lambda^2}
 \frac{e^{-\tau}}{1+|y|^\frac{9}{2}}
 {\bf 1}_{|y|<2R}
 \nonumber
 \\
 &\lesssim
 \frac{e^{-l\tau}}{\lambda^2}
 \frac{1}{R^\frac{1}{4}}
 \frac{{\bf 1}_{|y|<2R}}{1+|y|^\frac{9}{4}}. 
 \end{align}
We estimate the third term.
Since $\tilde w(x,t)$ satisfies \eqref{5.9},
we verify that
 \begin{align}\label{7.9}
 \frac{1-\chi_\text{in}}{\lambda^2}V(y)
 &
 |\Theta\chi_\text{out}+\tilde w|
 \lesssim
 \frac{e^{-l\tau}{\bf 1}_{|y|>R}}{\lambda^2|y|^4}
 \left(
 \left( 1+|z|^{2l} \right)\chi_\text{out}
 +
 \delta_0
 \left( 1+|z|^{2l+2} \right){\bf 1}_{|z|<e^{\frac{l\tau}{2l+2}}}
 \right)
 \nonumber
 \\
 &\qquad
 +
 \frac{1}{\lambda^2|y|^4}
 \frac{{\bf 1}_{|z|>\frac{l\tau}{2+2}}}{1+|x|^2}
 \nonumber
 \\
 &
 \lesssim
 \frac{e^{-l\tau}}{\lambda^2|y|^4}
 \left(
 {\bf 1}_{|y|>R}
 {\bf 1}_{|z|<1}
 +
 |z|^{2l+2}
 {\bf 1}_{1<|z|<2e^\frac{(l+\frac{1}{2})\tau}{2l+2}}
 \right)
 \nonumber
 \\
 &\qquad
 +
 \frac{\lambda^2e^{2\tau}}{|z|^4}
 {\bf 1}_{e^{\frac{l\tau}{2l+2}}<|z|<e^\frac{\tau}{2}}
 +
 \frac{\lambda^2}{|x|^6}{\bf 1}_{|x|>1}
 \nonumber
 \\
 &
 \lesssim
 \frac{e^{-l\tau}}{\lambda^2|y|^4}
 {\bf 1}_{|y|>R}
 {\bf 1}_{|z|<1}
 +
 e^{-(5l+2)\tau}
 |z|^{2l-2}
 {\bf 1}_{1<|z|<e^\frac{(l+\frac{1}{2})\tau}{2l+2}}
 \nonumber
 \\
 &\qquad
 +
 e^{-(4l+4-\frac{2}{l+1})\tau}
 {\bf 1}_{e^{\frac{l\tau}{2l+2}}<|z|<e^\frac{\tau}{2}}
 +
 \frac{e^{-(4l+4)\tau}}{|x|^6}{\bf 1}_{|x|>1}.
 \end{align}
The fourth term is easily estimated as
 \begin{align}\label{7.10}
 \left|
 \frac{\lambda_t}{\lambda^\frac{5}{2}}(1-\chi_\text{in})\Lambda_y{\sf Q}(y)
 \right|
 &<
 \frac{1}{\lambda^2}\frac{|\lambda_t|}{\lambda^\frac{3}{2}}
 \frac{1}{|y|^3}{\bf 1}_{|y|>R}
 <
 \frac{e^{-l\tau}}{\lambda^2}
 \frac{1}{|y|^3}{\bf 1}_{|y|>R}
 \nonumber
 \\
 &<
 \frac{e^{-l\tau}}{\lambda^2}\frac{1}{|y|^3}
 {\bf 1}_{|y|>R}
 {\bf 1}_{|z|<1}
 +
 \lambda e^{\frac{3}{2}\tau}\frac{e^{-l\tau}}{|z|^3}
 {\bf 1}_{|z|>1}
 \nonumber
 \\
 &<
 \frac{e^{-l\tau}}{\lambda^2}\frac{1}{|y|^3}
 {\bf 1}_{|y|>R}
 {\bf 1}_{|z|<1}
 +
 e^{-(3l+\frac{1}{2})\tau}
 {\bf 1}_{1<|z|<e^\frac{\tau}{2}}
 +
 \frac{e^{-(3l+2)\tau}}{|x|^3}
 {\bf 1}_{|x|>1}.
 \end{align}
We finally estimate ${\sf N}(v)$.
Since
 \[
 |\, |a+b|^{p-1}(a+b)-|a|^{p-1}a-p|a|^{p-1}b\, |
 \lesssim
 |a|^{p-2}b^2+|b|^p
 \qquad\text{for } a,b\in\R
 \qquad(p\geq2)
 \]
we get
 \begin{align*}
 |{\sf N}(v)|
 &=
 |f({\sf Q}_\lambda+\Theta\chi_\text{out}+\epsilon_\lambda\chi_\text{in}+\tilde w)
 -
 f({\sf Q}_\lambda)
 -
 f'({\sf Q}_\lambda)
 (\Theta\chi_\text{out}+\epsilon_\lambda\chi_\text{in}+\tilde w)|
 \\
 &\lesssim
 f''({\sf Q}_{\lambda})
 (\Theta\chi_\text{out}+\epsilon_\lambda\chi_\text{in}+\tilde w)^2
 +
 f(\Theta\chi_\text{out}+\epsilon_\lambda\chi_\text{in}+\tilde w)
 \\
 &\lesssim
 \frac{1}{\sqrt{\lambda}}
 \frac{\Theta^2\chi_\text{out}+\epsilon_\lambda^2\chi_\text{in}+\tilde w^2}{1+|y|}
 +
 |\Theta|^p\chi_\text{out}
 +
 |\epsilon_\lambda|^p\chi_\text{in}
 +
 |\tilde w|^p
 \\
 &\lesssim
 \left(
 \frac{1}{\sqrt{\lambda}}
 \frac{\Theta^2+\tilde w^2}{1+|y|}
 +
 |\Theta|^p
 +
 |\tilde w|^p
 \right)
 {\bf 1}_{|z|<1}
 +
 \left(
 \frac{1}{\sqrt{\lambda}}
 \frac{\epsilon_\lambda^2}{1+|y|}
 +
 |\epsilon_\lambda|^p
 \right)
 {\bf 1}_{|z|<2R}
 \\
 &\qquad
 +
 \left(
 \frac{1}{\sqrt{\lambda}}
 \frac{\Theta^2+\tilde w^2}{|y|}
 +
 |\Theta|^p
 +
 |\tilde w|^p
 \right)
 {\bf 1}_{1<|z|<e^\frac{l\tau}{2l+2}}
 \\
 &\qquad
 +
 \left(
 \frac{1}{\sqrt{\lambda}}
 \frac{\Theta^2}{|y|}
 +
 |\Theta|^p
 \right)
 {\bf 1}_{e^\frac{l\tau}{2l+2}<|z|<2e^\frac{(l+\frac{1}{2})\tau}{2l+2}}
 +
 \left(
 \frac{1}{\sqrt{\lambda}}
 \frac{\tilde w^2}{|y|}
 +
 |\tilde w|^p
 \right)
 {\bf 1}_{|z|>e^\frac{l\tau}{2l+2}}.
 \end{align*}
Since $|\Theta(x,t)|\lesssim(T-t)^l(1+|z|^{2l})$ and
$\tilde{w}(x,t)$ satisfies \eqref{5.9},
we see that
\begin{align*}
 &
 \left(
 \frac{1}{\sqrt{\lambda}}
 \frac{\Theta^2+\tilde w^2}{1+|y|}
 +
 |\Theta|^p
 +
 |\tilde w|^p
 \right)
 {\bf 1}_{|z|<1}
 \lesssim
 \left(
 \frac{1}{\sqrt{\lambda}}
 \frac{e^{-2l\tau}}{1+|y|}
 +
 e^{-pl\tau}
 \right)
 {\bf 1}_{|z|<1}
 \\
 &\qquad\lesssim
 \frac{e^{-2l\tau}}{\sqrt{\lambda}}
 \left(
 {\bf 1}_{|y|<1}
 +
 \frac{|y|^\frac{7}{4}}{|y|^\frac{11}{4}}
 {\bf 1}_{|y|>1}
 {\bf 1}_{|z|<1}
 \right)
 +
 e^{-pl\tau}
 {\bf 1}_{|z|<1}.
 \end{align*}
Here we note that
$|y|^\frac{7}{4}{\bf 1}_{|z|<1}
 =(\lambda^{-1}e^{-\frac{\tau}{2}})^\frac{7}{4}|z|^\frac{7}{4}{\bf 1}_{|z|<1}
 <(\lambda^{-1}e^{-\frac{\tau}{2}})^\frac{7}{4}{\bf 1}_{|z|<1}$.
Therefore we deduce that
\begin{align*}
 \left(
 \frac{1}{\sqrt{\lambda}}
 \frac{\Theta^2+\tilde w^2}{1+|y|}
 +
 |\Theta|^p
 +
 |\tilde w|^p
 \right)
 {\bf 1}_{|z|<1}
 &\lesssim
 \frac{e^{-2l\tau}}{\sqrt{\lambda}}
 \left(
 1
 +
 \frac{e^{-\frac{7\tau}{8}}}{\lambda^\frac{7}{4}}\frac{1}{|y|^\frac{11}{4}}
 {\bf 1}_{|y|>1}
 \right)
 {\bf 1}_{|z|<1}
 +
 e^{-pl\tau}
 {\bf 1}_{|z|<1}
 \\
 &\lesssim
 \frac{1}{\lambda^2}
 \frac{e^{-(\frac{3l}{2}+\frac{3}{8})\tau}}{1+|y|^\frac{11}{4}}
 {\bf 1}_{|z|<1}
 +
 e^{-pl\tau}
 {\bf 1}_{|z|<1}.
 \end{align*}
Furthermore from \eqref{6.16} and $\tau_0=R$,
we verify that
 \begin{align*}
 \left(
 \frac{1}{\sqrt{\lambda}}
 \frac{\epsilon_\lambda^2}{1+|y|}
 +
 |\epsilon_\lambda|^p
 \right)
 {\bf 1}_{|y|<2R}
 &\lesssim
 \frac{1}{\lambda^2}
 \left(
 \frac{\lambda^\frac{3}{2}}{1+|y|}
 \frac{R^8e^{-2l\tau}}{1+|y|^9}
 +
 \frac{\lambda^2R^{4p}e^{-pl\tau}}{1+|y|^\frac{9p}{2}}
 \right)
 {\bf 1}_{|y|<2R}
 \\
 &\lesssim
 \frac{e^{-l\tau}}{\lambda^2}
 \left(
 \frac{R^8e^{-(4l+3)\tau}}{1+|y|^{10}}
 +
 \frac{R^{4p}e^{-((p+3)l+4)\tau}}{1+|y|^{\frac{9p}{2}}}
 \right)
 {\bf 1}_{|y|<2R}
 \\
 &\lesssim
 \frac{e^{-l\tau}}{\lambda^2}
 \left(
 \frac{e^{-\tau}}{1+|y|^{10}}
 +
 \frac{e^{-\tau}}{1+|y|^{\frac{9p}{2}}}
 \right)
 {\bf 1}_{|y|<2R}.
 \end{align*}
For the case $2<p<3$,
there exists $c_p>0$ such that
 \begin{equation}\label{7.11}
 |\xi|^p<c_p(|\xi|^2+|\xi|^3) \qquad\text{for } \xi\in\R.
 \end{equation}
From this relation,
we see that
 \begin{align*}
 &
 \left(
 \frac{1}{\sqrt{\lambda}}
 \frac{\Theta^2+\tilde w^2}{|y|}
 +
 |\Theta|^p
 +
 |\tilde w|^p
 \right)
 {\bf 1}_{1<|z|<e^\frac{l\tau}{2l+2}}
 \\
 &\qquad\lesssim
 \left(
 \frac{1}{\sqrt{\lambda}}
 \frac{e^{-2l\tau}(|z|^{4l}+\delta_0^2|z|^{4l+4})}{|y|}
 +
 e^{-pl\tau}\left( |z|^{2pl}+\delta_0^p|z|^{2p(l+1)} \right)
 \right)
 {\bf 1}_{1<|z|<e^\frac{l\tau}{2l+2}}
 \\
 &\qquad\lesssim
 \left(
 \frac{1}{\sqrt{\lambda}}
 \frac{e^{-2l\tau}|z|^{4l+4}}{|y|}
 +
 \left( e^{-l\tau}|z|^{2(l+1)} \right)^p
 \right)
 {\bf 1}_{1<|z|<e^\frac{l\tau}{2l+2}}
 \\
 &\qquad\lesssim
 \left(
 \frac{\sqrt{\lambda}e^\frac{\tau}{2}}{|z|}
 e^{-2l\tau}|z|^{4l+4}
 +
 e^{-2l\tau}
 |z|^{4(l+1)}
 +
 e^{-3l\tau}
 |z|^{6(l+1)}
 \right)
 {\bf 1}_{1<|z|<e^\frac{l\tau}{2l+2}}
 \\
 &\qquad\lesssim
 e^{-2l\tau}
 |z|^{4l+4}
 {\bf 1}_{1<|z|<e^\frac{l\tau}{2l+2}}.
 \end{align*}
We use \eqref{7.11} again to get
 \begin{align*}
 \left(
 \frac{1}{\sqrt{\lambda}}
 \frac{\Theta^2}{|y|}
 +
 |\Theta|^p
 \right)
 {\bf 1}_{e^\frac{l\tau}{2l+2}<|z|<2e^\frac{(l+\frac{1}{2})\tau}{2l+2}}
 &\lesssim
 \left(
 \frac{1}{\sqrt{\lambda}}
 \frac{e^{-2l\tau}|z|^{4l}}{|y|}
 +
 e^{-pl\tau}|z|^{2pl}
 \right)
 {\bf 1}_{e^\frac{l\tau}{2l+2}<|z|<2e^\frac{(l+\frac{1}{2})\tau}{2l+2}}
 \\
 &
 \lesssim
 \left(
 e^{-2l\tau}|z|^{4l}
 +
 e^{-3l\tau}|z|^{6l}
 \right)
 {\bf 1}_{e^\frac{l\tau}{2l+2}<|z|<2e^\frac{(l+\frac{1}{2})\tau}{2l+2}}
 \\
 &
 \lesssim
 e^{-2l\tau}|z|^{4l}
 {\bf 1}_{e^\frac{l\tau}{2l+2}<|z|<2e^\frac{(l+\frac{1}{2})\tau}{2l+2}}
 \\
 &
 \lesssim
 e^{-(l+1)\tau}|z|^{2l+2}
 {\bf 1}_{e^\frac{l\tau}{2l+2}<|z|<2e^\frac{(l+\frac{1}{2})\tau}{2l+2}}.
 \end{align*}
We finally estimate the last term.
 \begin{align*}
 \left(
 \frac{1}{\sqrt{\lambda}}
 \frac{\tilde w^2}{|y|}
 +
 |\tilde w|^p
 \right)
 {\bf 1}_{|z|>e^\frac{l\tau}{2l+2}}
 &\lesssim
 \left(
 \frac{1}{\sqrt{\lambda}}
 \frac{1}{|y|}
 \frac{\delta_0^2}{1+|x|^4}
 +
 \frac{\delta_0^p}{1+|x|^{2p}}
 \right)
 {\bf 1}_{|z|>e^\frac{l\tau}{2l+2}}
 \\
 &\lesssim
 \left(
 \frac{\sqrt{\lambda}e^\frac{\tau}{2}}{|z|}
 +
 1
 \right)
 {\bf 1}_{e^\frac{l\tau}{2l+2}<|z|<e^\frac{\tau}{2}}
 +
 \left(
 \frac{\sqrt{\lambda}}{|x|}
 \frac{\delta_0^2}{|x|^4}
 +
 \frac{\delta_0^p}{|x|^{2p}}
 \right)
 {\bf 1}_{|x|>1}
 \\
 &\lesssim
 {\bf 1}_{e^\frac{l\tau}{2l+2}<|z|<e^\frac{\tau}{2}}
 +
 \frac{\delta_0^2}{|x|^{2p}}
 {\bf 1}_{|x|>1}.
 \end{align*}
From the above estimates,
we conclude that
 \begin{align}\label{7.12}
 |{\sf N}(v)|
 &\lesssim
 \frac{e^{-l\tau}}{\lambda^2}
 \frac{e^{-\frac{3}{8}\tau}}{1+|y|^\frac{11}{4}}
 {\bf 1}_{|z|<1}
 +
 e^{-pl\tau}
 {\bf 1}_{|z|<1}
 +
 e^{-2l\tau}
 |z|^{4l+4}
 {\bf 1}_{1<|z|<e^\frac{l\tau}{2l+2}}
 \nonumber
 \\
 &\qquad
 +
 e^{-(l+1)\tau}|z|^{2l+2}
 {\bf 1}_{e^\frac{l\tau}{2l+2}<|z|<2e^\frac{(l+\frac{1}{2})\tau}{2l+2}}
 +
 {\bf 1}_{e^\frac{l\tau}{2l+2}<|z|<e^\frac{\tau}{2}}
 +
 \frac{\delta_0^2}{|x|^{2p}}
 {\bf 1}_{|x|>1}.
 \end{align}
Combining \eqref{7.7} - \eqref{7.10} and \eqref{7.12},
we complete the proof.
 \end{proof}

\subsection{$L_\rho^2(\R^5)$ estimate for $\Phi(z,\tau)$}
\label{Sec7.3}
To derive the estimate for a solution $\Phi(z,\tau)$ of \eqref{7.6},
we first consider
 \[
 \begin{cases}
 \dis
 \pa_\tau\Phi_1
 =
 A_z\Phi_1
 &\dis
 \text{in } \R^5\times(\tau_0,\infty),
 \\ \dis
 \Phi_1 =
 \{({\bf d}\cdot{\bf e})\chi_\text{out}\}^\bot
 &
 \text{for } \tau=\tau_0.
 \end{cases}
 \]
 \begin{lem}\label{Lem7.2}
 There exists $K_1>1$ independent of $R$, $\delta_0$, $\sigma$ such that
 \[
 |\Phi_1(z,\tau)|
 <
 K_1e^{-(l+1)\tau}
 \left( 1+|z|^{2l+2} \right)
 \qquad\text{\rm for}\ (z,\tau)\in\R^5\times(\tau_0,\infty).
 \]
 \end{lem}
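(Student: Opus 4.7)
My plan is to exploit the spectral gap of $A_z$ on $Y_l^\bot$ and convert it into a pointwise bound via Lemma~\ref{Lem3.6} and Lemma~\ref{Lem3.7}, splitting $\R^5$ by the threshold $|z|=2e^{(\tau-\tau_0)/2}$.

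First I analyze the initial data. Since $({\bf d}\cdot{\bf e})\in Y_l:={\rm span}\{e_0,\ldots,e_l\}$, we have
\[
\Phi_1(\tau_0)=\{({\bf d}\cdot{\bf e})\chi_{\rm out}\}^\bot=-\{({\bf d}\cdot{\bf e})(1-\chi_{\rm out})\}^\bot,
\]
whose argument is supported on $\{|z|>e^{B\tau_0}\}$. Because $\rho(z)=e^{-|z|^2/4}$ is exponentially small in $e^{2B\tau_0}$ on this set and $|{\bf d}|\lesssim e^{-(2l+1)\tau_0}$ by \eqref{7.5}, a direct estimate yields
\[
\|\Phi_1(\tau_0)\|_\rho\lesssim|{\bf d}|e^{-ce^{2B\tau_0}},\qquad|\Phi_1(\tau_0)(z)|\lesssim|{\bf d}|(1+|z|^{2l}),
\]
for some absolute $c>0$ (the pointwise bound uses the polynomial growth of $\bf e$ together with $L_\rho^2$-boundedness of the $Y_l$-projection). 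Since each $e_k$ is an eigenfunction of $A_z$ with eigenvalue $-k$, the semigroup $e^{A_z\tau}$ preserves $Y_l^\bot$ and its first eigenvalue there is $l+1$, hence
\[
\|\Phi_1(\tau)\|_\rho\leq e^{-(l+1)(\tau-\tau_0)}\|\Phi_1(\tau_0)\|_\rho.
\]

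For $\tau-\tau_0\geq 1$ and $|z|\leq 2e^{(\tau-\tau_0)/2}$, I take the variable time step $s_0:=\max(1,2\log(|z|/2))\leq \tau-\tau_0$ and write $\Phi_1(\tau)=e^{A_z s_0}\Phi_1(\tau-s_0)$. Since $e^{-s_0}|z|^2\leq 4$, Lemma~\ref{Lem3.6} has uniformly bounded Gaussian and normalization factors, so together with the spectral decay
\[
|\Phi_1(z,\tau)|\lesssim e^{(l+1)s_0}e^{-(l+1)(\tau-\tau_0)}\|\Phi_1(\tau_0)\|_\rho\lesssim(1+|z|^{2l+2})e^{-(l+1)\tau}\cdot e^{-l\tau_0-ce^{2B\tau_0}},
\]
where I used $e^{(l+1)s_0}\lesssim 1+|z|^{2l+2}$ and collected the $\tau_0$-factors through $|{\bf d}|\lesssim e^{-(2l+1)\tau_0}$. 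The trailing factor is much smaller than any fixed $K_1$. For $|z|>2e^{(\tau-\tau_0)/2}$, Lemma~\ref{Lem3.7} applied to $|\Phi_1(\tau_0)(y)|\lesssim|{\bf d}|(1+|y|^{2l})$ gives $|\Phi_1(z,\tau)|\lesssim|{\bf d}|e^{-l(\tau-\tau_0)}|z|^{2l}$ (the polynomial term dominates since $e^{-l(\tau-\tau_0)}|z|^{2l}>4^l$), and dividing by $K_1 e^{-(l+1)\tau}|z|^{2l+2}$ reduces to the ratio $|{\bf d}|e^{\tau+l\tau_0}/(K_1|z|^2)\lesssim e^{-l\tau_0}/K_1$ thanks to $|z|^2>4e^{\tau-\tau_0}$.

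The remaining short window $\tau-\tau_0\in[0,1]$ is handled directly by Lemma~\ref{Lem3.7} applied to the pointwise initial bound $|\Phi_1(\tau_0)(z)|\lesssim e^{-(2l+1)\tau_0}(1+|z|^{2l})$, which is easily absorbed into $K_1 e^{-(l+1)\tau}(1+|z|^{2l+2})$ over such a short interval. The main obstacle is the choice of the variable step $s_0$ in the first regime: it must balance Lemma~\ref{Lem3.6}'s Gaussian growth $\exp(e^{-s_0}|z|^2/4)$, which forces $s_0\gtrsim 2\log|z|$, against the spectral-gap decay $e^{-(l+1)(\tau-\tau_0-s_0)}$, which prefers $s_0$ small; the choice $s_0=\max(1,2\log(|z|/2))$ is precisely what makes both competing constraints close simultaneously across the full range $|z|\leq 2e^{(\tau-\tau_0)/2}$.
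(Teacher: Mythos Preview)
Your argument is correct. The paper's proof takes a different route: after obtaining the same $L^2_\rho$ decay $\|\Phi_1(\tau)\|_\rho\lesssim e^{-(l+1)\tau}$, the paper does \emph{not} split by the moving threshold $|z|\sim e^{(\tau-\tau_0)/2}$. Instead it observes that $Ke^{-(l+1)\tau}\tilde e_{l+1}(z)$, with $\tilde e_{l+1}=\pm e_{l+1}$ chosen to have positive leading coefficient, is an exact solution of $\varphi_\tau=A_z\varphi$, and uses the maximum principle on the exterior region $\{|z|>r_{l+1}\}$ with the $L^2_\rho$ bound supplying the lateral boundary control on $|z|=r_{l+1}$ and the crude pointwise bound $|\Phi_1(\tau_0)|\lesssim|{\bf d}|\,|z|^{2l}$ supplying the initial comparison. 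Your approach replaces this comparison with a pure semigroup argument: Lemma~\ref{Lem3.6} with a variable delay $s_0\approx 2\log|z|$ in the inner zone, and Lemma~\ref{Lem3.7} combined with positivity of $e^{A_z\tau}$ in the outer zone. The paper's method is shorter once one notices that the eigenfunction $e_{l+1}$ furnishes an exact barrier with precisely the target growth $|z|^{2l+2}$; your method is more robust, since it does not rely on the availability of such a barrier and would carry over to settings where no exact eigenfunction of the right degree is at hand.
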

\begin{proof}
We estimate the initial data.
 \begin{align*}
 \{({\bf d}\cdot{\bf e})\chi_\text{out}\}^\bot
 &=
 ({\bf d}\cdot{\bf e})\chi_\text{out}
 -
 \sum_{j,k=0}^l
 \frac{{\sf d}_j}{\kappa_k^2}(e_j\chi_\text{out},e_k)_\rho e_k
 \\
 &=
 ({\bf d}\cdot{\bf e})\chi_\text{out}
 -
 \sum_{j,k=0}^l
 \frac{{\sf d}_j}{\kappa_k^2}(e_j,e_k)_\rho e_k
 +
 \sum_{j,k=0}^l
 \frac{{\sf d}_j}{\kappa_k^2}(e_j(1-\chi_\text{out}),e_k)_\rho e_k
 \\
 &=
 ({\bf d}\cdot{\bf e})(\chi_\text{out}-1)
 +
 \sum_{j,k=0}^l
 \frac{{\sf d}_j}{\kappa_k^2}(e_j(1-\chi_\text{out}),e_k)_\rho e_k.
 \end{align*}
Since
$\chi_\text{out}=\chi(\frac{|z|}{e^{B\tau}})$ with $B=\frac{l+\frac{1}{2}}{2l+2}$,
we see from \eqref{7.5} that
 \[
 \| \{({\bf d}\cdot{\bf e})\chi_\text{out}\}^\bot \|_\rho
 \lesssim
 |{\bf d}|
 \lesssim
 e^{-(2l+1)\tau_0}.
 \]
Therefore we deduce that
 \begin{equation}\label{7.13}
 \|\Phi_1(\tau)\|_\rho
 \lesssim
 \| \{({\bf d}\cdot{\bf e})\chi_\text{out}\}^\bot \|_\rho
 e^{-(l+1)(\tau-\tau_0)}
 \lesssim
 e^{-(l+1)\tau}.
 \end{equation}
We next derive a pointwise estimate.
Let $e_{l+1}(z)$ be given in Section \ref{Sec3.5},
which is written as
 \[
 e_{l+1}(z)=1+a_1|z|^2+a_2|z|^4+\cdots+a_{l+1}|z|^{2l+2}.
 \]
To construct a comparison function,
we define
 \[
 \tilde e_{l+1}(z)=ke_{l+1}(z)
 \qquad \text{with}\quad
 k=
 \begin{cases}
 1 & \text{if } a_{l+1}>0,
 \\
 -1 & \text{if } a_{l+1}<0.
 \end{cases}
 \]
From this definition,
there exists $r_{l+1}>0$ such that
 \[
 \tilde e_{l+1}(z)>\frac{|a_{l+1}|}{2}|z|^{2l+2}
 \qquad\text{for } |z|>r_{l+1}.
 \]
Therefore
we note from \eqref{7.13} that
there exists $K>1$ such that
$|\Phi_1(z,\tau)|<Ke^{-(l+1)\tau}\tilde e_{l+1}(z)$ for $|z|=r_{l+1}$.
Furthermore it holds from \eqref{7.5} that
$|\Phi_1(\tau_0)|<|{\bf d}|\cdot|{\bf e}(z)|\lesssim e^{-(2l+1)\tau_0}|z|^{2l}$
for $|z|>r_{l+1}$.
Therefore
a comparison argument shows that there exists $K'>1$ such that
 \[
 |\Phi_1(z,\tau)|
 <
 K'e^{-(l+1)\tau}\tilde e_{l+1}(z)
 \qquad\text{for } |z|>r_{l+1},\ \tau>\tau_0.
 \]
This completes the proof.
\end{proof}
Next we write $\Phi$ as
 \[
 \Phi=\Phi_1+\Phi_2.
 \]
The function $\Phi_2(z,\tau)$ solves
 \begin{equation}\label{7.14}
 \begin{cases}
 \dis
 \pa_\tau\Phi_2
 =
 A_z\Phi_2+e^{-\tau}G_\text{out}^\bot
 &\dis
 \text{in } \R^5\times(\tau_0,\infty),
 \\ \dis
 \Phi_2 = 0
 &
 \text{for } \tau=\tau_0.
 \end{cases}
 \end{equation}
We first provide $L_\rho^2$ estimates of $G_\text{out}$.
 \begin{lem}\label{Lem7.3}
 It holds that
 \[
 \|G_\text{\rm out}\|_\rho
 \lesssim
 e^{-(2l-\frac{1}{2})\tau}.
 \]
 \end{lem}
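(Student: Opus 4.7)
The plan is to square the pointwise bound of Lemma \ref{Lem7.1}, split the integral $\|G_\text{out}\|_\rho^2 = \int_{\R^5} G_\text{out}^2 \rho\, dz$ into the four regions $\{|z|<1\}$, $\{1<|z|<e^{l\tau/(2l+2)}\}$, $\{e^{l\tau/(2l+2)}<|z|<e^{\tau/2}\}$ and $\{|x|>1\}$ used in that lemma, and treat each piece separately. In the inner two regions the Gaussian factor $\rho(z)=e^{-|z|^2/4}$ is essentially harmless and must be used as a plain upper bound; in the outer two regions it will absorb any algebraic loss.

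The main work is in the innermost region $|z|<1$. Here $\rho\lesssim 1$ and the bound of Lemma \ref{Lem7.1} gives
\[
\int_{|z|<1}G_\text{out}^2\rho\,dz
\lesssim
\frac{e^{-2l\tau}}{\lambda^4 R^{1/2}}
\int_{|z|<1}\!
\left(\frac{\mathbf{1}_{|y|<2R}}{1+|y|^{9/4}}+\frac{1}{1+|y|^{11/4}}\right)^{\!2} dz + e^{-2pl\tau}.
\]
I would now change variables by the inner scaling $z=\lambda e^{\tau/2}y$, so $dz=(\lambda e^{\tau/2})^5 dy$. Since $e^{-\tau/2}/\lambda$ is much larger than $R$, the cutoff $\mathbf{1}_{|y|<2R}$ restricts the first term, and $\int_{|y|<2R}(1+|y|^{9/4})^{-2}dy\lesssim R^{1/2}$ (the exponent $9/4$ gives a divergent integral that grows like $R^{5-9/2}=R^{1/2}$), while $\int_{\R^5}(1+|y|^{11/4})^{-2}dy$ is a convergent integral. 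Combining with $\lambda\sim\alpha_l^2 e^{-(2l+2)\tau}$ yields
\[
\int_{|z|<1}G_\text{out}^2\rho\,dz
\lesssim
\frac{e^{-2l\tau}}{\lambda^4 R^{1/2}}\cdot\lambda^5 e^{5\tau/2}\cdot R^{1/2}+e^{-2pl\tau}
\lesssim
e^{-(4l-1/2)\tau},
\]
where the $e^{-2pl\tau}$ term is easily dominated since $2pl=14l/3>4l-1/2$ for $l\geq1$. This is the piece that produces (and saturates) the exponent $2l-1/2$ in the final bound.

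The remaining three regions are routine. For $1<|z|<e^{l\tau/(2l+2)}$ we have $G_\text{out}^2\lesssim e^{-4l\tau}|z|^{8l+8}$, and $\int_{\R^5}|z|^{8l+8}\rho\,dz$ is a finite constant, giving $e^{-4l\tau}$. For $e^{l\tau/(2l+2)}<|z|<e^{\tau/2}$ and for $|x|>1$ (i.e.\ $|z|>e^{\tau/2}$), the Gaussian weight provides a factor $e^{-|z|^2/4}\leq e^{-e^{l\tau/(l+1)}/4}$ or $e^{-e^\tau/4}$ respectively, so any algebraic loss in $G_\text{out}$ is absorbed and the contributions are super-exponentially small. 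Collecting the four pieces gives
\[
\|G_\text{out}\|_\rho^2\lesssim e^{-(4l-1/2)\tau},
\]
which implies $\|G_\text{out}\|_\rho\lesssim e^{-(2l-1/4)\tau}\leq e^{-(2l-1/2)\tau}$ as claimed.

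The only delicate accounting is in the $|z|<1$ piece: the $R^{-1/4}$ prefactor appearing in Lemma \ref{Lem7.1} is precisely tuned so that when squared it gives $R^{-1/2}$, which cancels the $R^{1/2}$ growth coming from the logarithmically-critical exponent $9/4$ of the inner decay; without this cancellation the bound would carry an unwanted factor of $R^{1/2}$. Everything else reduces to tracking the power of $\lambda\sim e^{-(2l+2)\tau}$ and the Jacobian $(\lambda e^{\tau/2})^5$ in the change of variables.
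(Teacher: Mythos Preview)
Your proof is correct and follows essentially the same approach as the paper: split according to the four regions of Lemma~\ref{Lem7.1}, use the Gaussian weight to kill the outer pieces, and do the main work on $|z|<1$ via the change of variables $z=\lambda e^{\tau/2}y$. The only difference is bookkeeping of the $R$ factors: you use the sharp growth $\int_{|y|<2R}(1+|y|^{9/4})^{-2}\,dy\lesssim R^{1/2}$ to cancel the $R^{-1/2}$ prefactor exactly, whereas the paper uses the cruder volume bound (yielding an extra $R^{9/4}$) and then absorbs it via $e^{-\tau/4}\le e^{-\tau_0/4}=e^{-R/4}$ --- so your remark that the $R^{-1/4}$ in Lemma~\ref{Lem7.1} is ``precisely tuned'' is a slight over-reading, since any polynomial loss in $R$ is in fact harmless here.
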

\begin{proof}
 From Lemma \ref{Lem7.1},
 we easily verify that
 \begin{align*}
 \|G_\text{out}\|_\rho
 &\lesssim
 \|G_\text{out}{\bf 1}_{|z|<1}\|_\rho
 +
 \|G_\text{out}{\bf 1}_{|z|>1}\|_\rho
 \\
 &\lesssim
 \|G_\text{out}{\bf 1}_{|z|<1}\|_\rho
 +
 e^{-2l\tau}.
 \end{align*}
We estimate the first term.
 \begin{align*}
 \|G_\text{out}{\bf 1}_{|z|<1}\|_\rho
 &\lesssim
 \frac{e^{-l\tau}}{\lambda^2R^\frac{1}{4}}
 \left(
 \left\|
 \frac{{\bf 1}_{|y|<2R}}{1+|y|^\frac{9}{4}}
 \right\|_{L^2(|z|<1)}
 +
 \left\|
 \frac{{\bf 1}_{|y|<2R}+{\bf 1}_{|y|>2R}}{1+|y|^\frac{11}{4}}
 \right\|_{L^2(|z|<1)}
 \right)
 +
 e^{-pl\tau}
 \\
 &\qquad\lesssim
 \frac{e^{-l\tau}}{\lambda^2R^\frac{1}{4}}
 \left(
 \left( R\lambda e^\frac{\tau}{2} \right)^\frac{5}{2}
 +
 \left\|
 \frac{{\bf 1}_{|y|>2R}}{|y|^\frac{11}{4}}
 \right\|_{L^2(|z|<1)}
 \right)
 +
 e^{-pl\tau}
 \\
 &\qquad\lesssim
 \frac{e^{-l\tau}}{\lambda^2R^\frac{1}{4}}
 \left(
 \left( R\lambda e^\frac{\tau}{2} \right)^\frac{5}{2}
 +
 \left( \lambda e^\frac{\tau}{2} \right)^\frac{11}{4}
 \left( R\lambda e^\frac{\tau}{2} \right)^{-\frac{1}{4}}
 \right)
 +
 e^{-pl\tau}
 \\
 &\qquad\lesssim
 R^\frac{9}{4}e^{-\frac{\tau_0}{4}}e^{-(2l-\frac{1}{2})\tau}
 +
 e^{-pl\tau}.
 \end{align*}
Since $\tau_0=R$, we complete the proof.
\end{proof}
From this estimates,
we immediately obtain $L_\rho^2$ estimates of $\Phi_2(z,\tau)$. 
 \begin{lem}\label{Lem7.4}
 There exists $K_2>1$ independent of $R$, $\delta_0$, $\sigma$ such that
 \[
 \|\Phi_2(\tau)\|_\rho<K_2e^{-(l+1)\tau}.
 \]
 \end{lem}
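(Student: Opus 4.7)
The plan is to use Duhamel's formula together with the spectral gap afforded by the orthogonality condition built into $G_\text{out}^\bot$. Since $\Phi_2$ solves the inhomogeneous problem \eqref{7.14} with zero initial data, we may write
\[
 \Phi_2(\tau)
 =
 \int_{\tau_0}^\tau
 e^{(\tau-\tau')A_z}\bigl[ e^{-\tau'}G_\text{out}^\bot(\tau') \bigr]
 d\tau'.
\]
The key point is that $G_\text{out}^\bot$ has been constructed (in Section \ref{Sec7.1}) precisely to be orthogonal in $L_\rho^2(\R^5)$ to the first $l+1$ eigenfunctions $e_0,\ldots,e_l$. Since the semigroup $e^{\sigma A_z}$ is self-adjoint on $L_\rho^2$ and preserves this spectral decomposition, the image $e^{\sigma A_z}G_\text{out}^\bot(\tau')$ remains orthogonal to $Y_l=\mathrm{span}\{e_0,\ldots,e_l\}$ for every $\sigma>0$.

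The first step is therefore to record the spectral bound
\[
 \|e^{(\tau-\tau')A_z}G_\text{out}^\bot(\tau')\|_\rho
 \leq
 e^{-(l+1)(\tau-\tau')}\|G_\text{out}^\bot(\tau')\|_\rho,
\]
which follows from the eigenvalue $\lambda_{l+1}=l+1$ of $-A_z$ being the smallest eigenvalue on the orthogonal complement of $Y_l$. Next, since orthogonal projection does not increase the $L_\rho^2$ norm, Lemma \ref{Lem7.3} yields
\[
 \|G_\text{out}^\bot(\tau')\|_\rho\leq\|G_\text{out}(\tau')\|_\rho\lesssim e^{-(2l-\frac{1}{2})\tau'}.
\]

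Plugging these into the Duhamel formula gives
\[
 \|\Phi_2(\tau)\|_\rho
 \lesssim
 \int_{\tau_0}^\tau
 e^{-(l+1)(\tau-\tau')}e^{-\tau'}e^{-(2l-\frac{1}{2})\tau'}d\tau'
 =
 e^{-(l+1)\tau}\int_{\tau_0}^\tau e^{-(l-\frac{1}{2})\tau'}d\tau'.
\]
Since $l\geq1$ implies $l-\tfrac{1}{2}\geq\tfrac{1}{2}>0$, the remaining $\tau'$-integral is bounded by $\frac{2}{2l-1}e^{-(l-\frac{1}{2})\tau_0}\lesssim1$ uniformly in $\tau\geq\tau_0$. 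This yields $\|\Phi_2(\tau)\|_\rho\lesssim e^{-(l+1)\tau}$ with constant independent of $R$, $\delta_0$, $\sigma$.

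The only nontrivial point is the spectral-gap estimate; everything else is direct integration. I expect no serious obstacle once one notes that projecting onto the orthogonal complement of $Y_l$ is preserved by $e^{\sigma A_z}$ and that $l\geq1$ ensures the integrand is strictly decaying in $\tau'$, so that the coefficient $e^{-(l+1)\tau}$ is the sharp contribution.
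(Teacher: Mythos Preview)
Your argument is correct and essentially equivalent to the paper's. The paper obtains the same bound by an energy inequality: multiplying \eqref{7.14} by $\Phi_2$ in $L_\rho^2$ and using $(A_z\Phi_2,\Phi_2)_\rho\le -(l+1)\|\Phi_2\|_\rho^2$ (which is exactly your spectral gap, since $\Phi_2$ stays in $Y_l^\bot$), then integrating the resulting differential inequality together with Lemma~\ref{Lem7.3}. Your Duhamel version and the paper's energy version are the two standard ways of exploiting the same spectral gap and the same $L_\rho^2$ bound on the forcing; neither offers a real advantage over the other here.
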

 \begin{proof}
We take the inner product $(\cdot,\Phi_2)_\rho$ in \eqref{7.14} to get
 \begin{align*}
 \frac{1}{2}\frac{d}{d\tau}\|\Phi_2\|_\rho^2
 =
 (A_z\Phi_2,\Phi_2)_\rho+e^{-\tau}(G_\text{out}^\bot,\Phi_2)_\rho.
 \end{align*}
 Since $(A_z\Phi_2,\Phi_2)<-(l+1)\|\Phi_2\|_\rho^2$,
 we deduce from Lemma \ref{Lem7.3} that
 \[
 \|\Phi_2(\tau)\|_\rho\lesssim e^{-(l+1)\tau}.
 \]
 The proof is completed.
\end{proof}

\subsection{Pointwise estimate for $\Phi_2(z,\tau)$ in $|z|<e^\frac{l\tau}{2l+2}$}
\label{Sec7.4}
Since $\Phi_2(z,\tau)$ is a solution of \eqref{7.14},
it is written as an integral form.
 \begin{equation}\label{7.15}
 \Phi_2(z,\tau)
 =
 \int_{\tau_0}^\tau
 e^{A_z(\tau-\tau')e^{-\tau'}}G_\text{out}^\bot
 d\tau'.
 \end{equation}
We here estimate $\Phi_2(z,\tau)$
by the same manner as in \cite{Herrero1,Herrero2}
(see also \cite{Mizoguchi,Seki} and Section 5.2 \cite{Harada2}).
For simplicity we put
 \[
 b=
 \min\left\{ \frac{2l+\frac{3}{2}}{4},\frac{2l+1}{2l+2} \right\}.
 \]
 \begin{pro}\label{Pro7.1}
 There exists $K_3>1$ independent of $R$, $\delta_0$, $\sigma$ such that
 \begin{align*}
 \int_{\tau_0}^\tau
 e^{A_z(\tau-\tau')}e^{-\tau'}|G_\text{\rm out}^\bot|d\tau'
 &<
 K_3\left(
 \frac{1}{R^\frac{1}{4}}
 \frac{e^{-l\tau}}{1+|y|^\frac{1}{4}}
 +
 T^be^{-l\tau}
 +
 \frac{e^{-l\tau}}{R^\frac{1}{4}}|z|^{-2l}
 +
 e^{-(2l+1)\tau}
 |z|^{4l+4}
 \right.
 \\
 &\qquad
 \left.
 +
 e^{-(l+\frac{2l+1}{2l+2})\tau}
 |z|^{2l+2}
 \right)
 \qquad\text{\rm for}\ (z,\tau)\in\R^n\times(\tau_0,\infty).
 \end{align*}
 \end{pro}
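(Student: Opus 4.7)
The plan is to work from the Duhamel representation \eqref{7.15} and decompose the source $e^{-\tau'}|G_{\text{out}}^\bot|$ according to the four spatial regions of Lemma~\ref{Lem7.1}, estimating each piece by the parabolic tools from Section~\ref{Sec3.5}. The orthogonal correction $G_{\text{out}}^\bot-G_{\text{out}}$ contributes only polynomial-in-$z$ pieces whose coefficients are of size $\lesssim e^{-2l\tau}$ by the inner-product estimates already derived in Section~\ref{Sec7.1}, and these are absorbed into the polynomial tails of the target bound.

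For the piece supported in $|z|<1$, the leading term is $\frac{e^{-l\tau'}}{\lambda^2 R^{1/4}}(1+|y|^{9/4})^{-1}$. Up to the bounded prefactor $R^{-1/4}$ together with the explicit $(T-t)^l$ scaling, this is precisely the source type treated by Lemma~\ref{Lem3.8} with $a=\tfrac14$ and $\gamma=2l+2$, and the lemma yields $R^{-1/4}e^{-l\tau}(1+|y|^{1/4})^{-1} + T^{(4l+3)/8}e^{-l\tau}$, which accounts for the first two terms of the proposition and is the origin of the first candidate for $b$. For the polynomial-growth pieces with majorants $e^{-2l\tau'}|z|^{4l+4}$ on $1<|z|<e^{l\tau'/(2l+2)}$ and $e^{-(l-1/(2l+2))\tau'}|z|^{2l+2}$ on the next annulus, I apply Lemma~\ref{Lem3.7}, which provides $|(e^{A_z\sigma}\theta_0)(z)|\lesssim 1+e^{-m\sigma}|z|^{2m}$ for $\theta_0=|z|^{2m}$. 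Inserting this into the Duhamel integral against the additional weight $e^{-\tau'}$, in each case the exponent $m$ exceeds the source decay rate by more than one, so the $\tau'$ integral concentrates near $\tau'=\tau$ and returns the tails $e^{-(2l+1)\tau}|z|^{4l+4}$ and $e^{-(l+(2l+1)/(2l+2))\tau}|z|^{2l+2}$. The $|x|>1$ contributions $e^{-(3l+2)\tau'}|x|^{-3}+\delta_0^2|x|^{-2p}$ and the constant tail on the outermost annulus are controlled by direct heat-kernel estimates; after reverting to the inner variable $y$ they fit into the $R^{-1/4}e^{-l\tau}|z|^{-2l}$ term and into a $T^{(2l+1)/(2l+2)}e^{-l\tau}$ contribution, which is the second candidate for $b$.

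The main obstacle is the bookkeeping of regional cutoffs that depend on $\tau'$, so one must track carefully which spatial region contributes at each time during the Duhamel integration, and also avoid logarithmic losses at borderline exponents where the source decay rate would coincide with the polynomial eigenvalue $m$; it is for this reason that the $R^{-1/4}$ slack built into Lemma~\ref{Lem7.1} is essential. Finally, one must verify that both candidates in $b=\min\{(2l+\tfrac32)/4,(2l+1)/(2l+2)\}$ arise from concrete pieces, namely the inner region via Lemma~\ref{Lem3.8} and the outer or far-field pieces via direct kernel estimates, so that the stated minimum correctly captures the dominant boundary defect.
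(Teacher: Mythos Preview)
Your source-region decomposition together with Lemma~\ref{Lem3.7} and Lemma~\ref{Lem3.8} is essentially the content of the paper's Lemma~\ref{Lem7.5}, and the polynomial tails $e^{-(2l+1)\tau}|z|^{4l+4}$ and $e^{-(l+\frac{2l+1}{2l+2})\tau}|z|^{2l+2}$ do come out exactly as you say. The gap is in your treatment of the inner piece. Lemma~\ref{Lem3.8} is stated for the source $\frac{e^{-\tau}}{\lambda^2}(1+|y|^{2+a})^{-1}$ and returns a bound that is \emph{uniform} in $\tau$; when you insert the extra factor $e^{-l\tau'}$ you can only bound it by $e^{-l\tau_1}$, where $\tau_1$ is the initial time of the Duhamel integral. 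Applied once from $\tau_0$ this produces $R^{-1/4}e^{-l\tau_0}(1+|y|^{1/4})^{-1}$, not $R^{-1/4}e^{-l\tau}(1+|y|^{1/4})^{-1}$, and the difference is a factor $e^{l(\tau-\tau_0)}$ which is unbounded. No rescaling of $\lambda$ repairs this, because the variable $y=x/\lambda$ is fixed by the problem.

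The missing ingredient is precisely the Herrero--Vel\'azquez time-splitting that the paper carries out in Sections~\ref{Sec7.4.2}--\ref{Sec7.4.4}. One chooses an intermediate time $\tau_1\in(\tau_0,\tau)$ depending on the \emph{target} point: for $|z|$ bounded one takes $\tau_1=\tau-1$, while for $2<|z|<e^{(\tau-\tau_0)/2}$ one takes $\tau_1$ so that $|z|=e^{(\tau-\tau_1)/2}$. On $[\tau_0,\tau_1]$ the orthogonality of $G_{\text{out}}^\bot$ to $e_0,\dots,e_l$ is used through Lemma~\ref{Lem3.6} and Lemma~\ref{Lem7.3} to gain the semigroup decay $e^{-(l+1)(\tau_1-\tau')}$ in $L_\rho^2$, which yields a contribution $\lesssim e^{-(l+1)\tau}$. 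On $[\tau_1,\tau]$ your argument (i.e.\ Lemma~\ref{Lem7.5}) gives $R^{-1/4}e^{-l\tau_1}(1+|y|^{1/4})^{-1}+T^b e^{-l\tau_1}$, and the relation $e^{-l\tau_1}=e^{-l\tau}|z|^{2l}$ then converts this into the $R^{-1/4}e^{-l\tau}|z|^{2l}$ term of the proposition. So that term originates from the inner piece combined with the $|z|$-dependent time split, not from the $|x|>1$ contributions as you suggest; the far-field pieces are genuinely lower order here. In short, your proposal reproduces Lemma~\ref{Lem7.5} but omits the subsequent case analysis that turns $e^{-l\tau_1}$ into $e^{-l\tau}$ and generates the $|z|^{2l}$ factor.
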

As a consequence of this proposition,
we obtain
 \begin{align}\label{7.16}
 \int_{\tau_0}^\tau
 e^{A_z(\tau-\tau')}e^{-\tau'}|G_\text{\rm out}^\bot|d\tau'
 &\lesssim
 \frac{1}{R^\frac{1}{4}}
 \frac{e^{-l\tau}}{1+|y|^\frac{1}{4}}
 +
 T^be^{-l\tau}
 +
 \frac{e^{-l\tau}}{R^\frac{1}{4}}|z|^{-2l}
 \nonumber
 \\
 &\qquad
 +
 e^{-(l+\frac{2l+1}{2l+2})\tau}
 |z|^{2l+2}
 \qquad\text{for }
 |z|<e^\frac{l\tau}{2l+2},\ \tau\in(\tau_0,\infty).
 \end{align}
For simplicity of computation,
we arrange the estimate in Lemma \ref{Lem7.1} as
 \begin{align*}
 |G_\text{\rm out}|
 &\lesssim
 \frac{e^{-l\tau}}{\lambda^2}
 \frac{1}{R^\frac{1}{4}}
 \frac{1}{1+|y|^\frac{9}{4}}
 +
 \left(
 e^{-pl\tau}
 +
 e^{-2l\tau}
 |z|^{4l+4}
 +
 e^{-(l-\frac{1}{2l+2})\tau}
 |z|^{2l+2}
 \right)
 \qquad\text{for } z\in\R^5
 \nonumber
 \\
 &=:
 G_\text{out}^{(1)}
 +
 G_\text{out}^{(2)}.
 \end{align*}
We first prepare the following lemma.
 \begin{lem}\label{Lem7.5}
 There exists $k>0$ independent of $\tau_1$, $R$, $\delta_0$, $\sigma$
 such that
 \begin{align*}
 \int_{\tau_1}^\tau
 e^{A_z(\tau-\tau')}e^{-\tau'}|G_\text{\rm out}^\bot|d\tau'
 &<
 k\left(
 \frac{1}{R^\frac{1}{4}}
 \frac{e^{-l\tau_1}}{1+|y|^\frac{1}{4}}
 +
 T^be^{-l\tau_1}
 +
 e^{-(2l+1)\tau}|z|^{4l+4}
 \right.
 \\
 &\qquad
 \left.
 +
 e^{-(l+\frac{2l+1}{2l+2})\tau}
 |z|^{2l+2}
 \right)
 \qquad\text{\rm for}\ (z,\tau)\in\R^5\times(\tau_1,\infty).
 \end{align*}
 \end{lem}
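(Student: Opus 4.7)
The plan is to use the pointwise bound
\[
|G_{\text{out}}^\bot|\leq |G_{\text{out}}|+\Bigl|\sum_{k=0}^l\kappa_k^{-2}(G_{\text{out}},e_k)_\rho e_k\Bigr|,
\]
to further decompose $|G_{\text{out}}|\lesssim G_{\text{out}}^{(1)}+G_{\text{out}}^{(2)}$ as arranged immediately before the lemma, and to handle each piece by the appropriate one of Lemma \ref{Lem3.7} and Lemma \ref{Lem3.8} combined with elementary $\tau'$-integration.

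For the contribution of $G_{\text{out}}^{(1)}=\frac{e^{-l\tau}}{\lambda^2 R^{1/4}}\frac{1}{1+|y|^{9/4}}$, I would pull out the monotone prefactor $e^{-l\tau_1}/R^{1/4}$ using $e^{-l\tau'}\leq e^{-l\tau_1}$ for $\tau'\geq\tau_1$; the remaining Duhamel integral is precisely the solution of the inhomogeneous heat equation in Lemma \ref{Lem3.8} with $a=1/4$, $\gamma=2l+2$, and initial time $\tau_1$, the hypotheses on $\lambda$ being supplied by Lemma \ref{Lem6.1}. The resulting bound $\frac{e^{-l\tau_1}}{R^{1/4}}\bigl(\frac{1}{1+|y|^{1/4}}+T_1^{(2l+3/2)/4}\bigr)$ produces the first two terms of the conclusion, because $\tau_1\geq\tau_0=R=-\log T$ gives $T_1\leq T$, and the definition of $b$ forces $b\leq (2l+3/2)/4$, hence $T_1^{(2l+3/2)/4}\leq T^b$.

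For the three summands of $G_{\text{out}}^{(2)}$ I would apply Lemma \ref{Lem3.7} to the two polynomial pieces with exponents $m=2l+2$ and $m=l+1$ respectively, and integrate the $\tau'$-integrals explicitly. Each produces a $z$-independent remainder dominated by $T^be^{-l\tau_1}$ (using $\tau_1\geq R$ and the elementary inequalities $(p-1)l+1$, $l+1$, and $(2l+1)/(2l+2)\geq b$), together with a polynomial-in-$z$ remainder matching precisely $e^{-(2l+1)\tau}|z|^{4l+4}$ or $e^{-(l+\frac{2l+1}{2l+2})\tau}|z|^{2l+2}$; the constant summand $e^{-pl\tau'}$ is integrated directly since $e^{A_z s}$ acts trivially on constants. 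For the projection correction I would invoke the eigenrelation $A_z e_k=-ke_k$, so that $e^{A_z(\tau-\tau')}$ acts diagonally on $e_k$; combined with Cauchy--Schwarz and the $L_\rho^2$-bound of Lemma \ref{Lem7.3}, the contribution reduces to a finite sum of elementary integrals of the form $|e_k(z)|e^{-k\tau}\int_{\tau_1}^\tau e^{-(2l+1/2-k)\tau'}d\tau'$ for $0\leq k\leq l$, which one then distributes among the four terms of the statement via $|e_k(z)|\lesssim 1+|z|^{2k}\leq 2+|z|^{2l+2}$.

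The principal obstacle I anticipate is the exponent bookkeeping in this last step: one must verify that for every $k$ the resulting term $(1+|z|^{2k})e^{-k\tau}e^{-(2l+1/2-k)\tau_1}$ can be absorbed into the right-hand side, which in the worst case ($k=l$, $|z|$ in an intermediate range) requires the additional $T^{1/4}$ factor hidden in the proof of Lemma \ref{Lem7.3}, together with a case split depending on whether $\tau-\tau_1$ is small or large. The rest of the proof is a careful but routine inventory, matching each integral against the appropriate envelope on the right-hand side.
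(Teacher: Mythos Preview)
Your overall strategy matches the paper's exactly: the same three-way split into $G_{\text{out}}^{(1)}$, $G_{\text{out}}^{(2)}$, and the projection correction, handled respectively by Lemma~\ref{Lem3.8}, Lemma~\ref{Lem3.7}, and Lemma~\ref{Lem7.3}. The treatment of $G_{\text{out}}^{(1)}$ and $G_{\text{out}}^{(2)}$ is correct as you describe it.

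The one genuine gap is in the projection step. Bounding $|e_k(z)|\lesssim 2+|z|^{2l+2}$ decouples the spatial growth from the $k$-dependent time decay $e^{-k\tau}$; the resulting term $|z|^{2l+2}e^{-k\tau}e^{-(2l+1/2-k)\tau_1}$ cannot be absorbed into any of the four pieces on the right when $k\le l$ and $\tau\gg\tau_1$, and neither the hidden $T^{1/4}$ from Lemma~\ref{Lem7.3} nor a split on $\tau-\tau_1$ alone repairs this (the threshold that matters depends on $|z|$ as well). The paper's fix is short and needs no extra smallness: keep the coupling by writing the term as
\[
e^{-(2l+\frac{1}{2})\tau_1}\bigl(e^{-(\tau-\tau_1)}(1+|z|^2)\bigr)^{k},
\]
and then use the elementary inequality $a^k\lesssim 1+a^{l+1}$ for $0\le k\le l$. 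The constant piece is $e^{-(2l+1/2)\tau_1}\lesssim T^be^{-l\tau_1}$, while the $a^{l+1}$ piece is $e^{-(l-1/2)\tau_1}e^{-(l+1)\tau}(1+|z|^2)^{l+1}$, which is absorbed into $e^{-(l+\frac{2l+1}{2l+2})\tau}|z|^{2l+2}$ because $e^{-(l-1/2)\tau_1}\le 1\le e^{\frac{1}{2l+2}\tau}$ for $l\ge 1$. In effect this \emph{is} a case split, but on whether $e^{-(\tau-\tau_1)}(1+|z|^2)$ is below or above $1$: the key is to raise the $|z|$-exponent and the time-decay exponent \emph{together}, from $k$ to $l+1$, rather than raising only the former.
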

 \begin{proof}
We write the integral as
 \begin{align*}
 \int_{\tau_1}^\tau
 e^{A_z(\tau-\tau')}e^{-\tau'}|G_\text{\rm out}^\bot|d\tau'
 &\lesssim
 \int_{\tau_1}^\tau e^{A_z(\tau-\tau')}
 e^{-\tau'}
 \left(
 \left| G_\text{out}^{(1)} \right|
 +
 \left| G_\text{out}^{(2)} \right|
 \right)
 d\tau'
 \\
 &\qquad
 +
 \sum_{k=0}^l
 \int_{\tau_1}^\tau
 \left|
 e^{A_z(\tau-\tau')}
 \frac{e^{-\tau'}}{\kappa_k^2}
 (G_\text{out},e_k)_\rho e_k
 \right|
 d\tau'.
 \end{align*}
From Lemma \ref{Lem3.7},
we see that
 \begin{align*}
 \int_{\tau_1}^\tau e^{A_z(\tau-\tau')}
 e^{-\tau'}
 \left| G_\text{out}^{(2)} \right|
 d\tau'
 &\lesssim
 \int_{\tau_1}^\tau
 \left\{
 e^{-(pl+1)\tau'}
 +
 e^{-(2l+1)\tau'}
 \left(
 1+e^{-(2l+2)(\tau-\tau')}|z|^{4l+4}
 \right)
 \right.
 \\
 &\qquad
 \left.
 +
 e^{-(l+\frac{2l+1}{2l+2})\tau'}
 \left(
 1+e^{-(l+1)(\tau-\tau')}|z|^{2l+2}
 \right)
 \right\}
 d\tau'
 \\
 &\lesssim
 e^{-(l+\frac{2l+1}{2l+2})\tau_1}
 +
 e^{-(2l+1)\tau}|z|^{4l+4}
 +
 e^{-(l+\frac{2l+1}{2l+2})\tau}
 |z|^{2l+2}
 \\
 &\lesssim
 T^\frac{2l+1}{2l+2}e^{-l\tau_1}
 +
 e^{-(2l+1)\tau}|z|^{4l+4}
 +
 e^{-(l+\frac{2l+1}{2l+2})\tau}
 |z|^{2l+2}.
 \end{align*}
Since $|a|^k\lesssim1+|a|^{l+1}$ for $k=0,1,2,\cdots,l$,
we get from Lemma \ref{Lem7.3} that
 \begin{align*}
 \int_{\tau_1}^\tau
 \left|
 e^{A_z(\tau-\tau')}
 e^{-\tau'}
 (G_\text{out},e_k)_\rho e_k
 \right|
 d\tau'
 &=
 \int_{\tau_1}^\tau
 \left|
 e^{-k(\tau-\tau')}
 e^{-\tau'}
 (G_\text{out},e_k)_\rho e_k
 \right|
 d\tau'
 \\
 &\lesssim
 e^{-(2l+\frac{1}{2})\tau_1}
 \left(
 e^{-(\tau-\tau_1)}
 \left( 1+|z|^{2} \right)
 \right)^k
 \\
 &\lesssim
 e^{-(2l+\frac{1}{2})\tau_1}
 \left\{
 1+
 \left(
 e^{-(\tau-\tau_1)}
 \left( 1+|z|^{2} \right)
 \right)^{l+1}
 \right\}
 \\
 &\lesssim
 e^{-(2l+\frac{1}{2})\tau_1}
 +
 e^{-(l-\frac{1}{2})\tau_1}e^{-(l+1)\tau}|z|^{2l+2}.
 \end{align*}
Furthermore
Lemma \ref{Lem3.8} implies
 \begin{align*}
 \int_{\tau_1}^\tau e^{A_z(\tau-\tau')}
 e^{-\tau'}
 \left| G_\text{out}^{(1)} \right|
 d\tau'
 &\lesssim
 \frac{1}{R^\frac{1}{4}}
 \frac{e^{-l\tau_1}}{1+|y|^\frac{1}{4}}
 +
 T^{(2l+\frac{3}{2})\frac{1}{4}}e^{-l\tau_1}.
 \end{align*}
 The proof is completed.
 \end{proof}
To obtain the estimate in Proposition \ref{Pro7.1},
we consider four cases separately.
 \[
 \begin{array}{cll}
 \text{(i)}   & z\in\R^n \quad\text{and}\quad \tau\in(\tau_0,\tau_0+1),
 \\[1mm]
 \text{(ii)}  & |z|<4 \quad\text{and}\quad \tau\in(\tau_0+1,\infty),
 \\[1mm]
 \text{(iii)} & 2<|z|<e^\frac{\tau-\tau_0}{2} \quad\text{and}\quad \tau\in(\tau_0+1,\infty),
 \\[1mm]
 \text{(iv)} & |z|>e^\frac{\tau-\tau_0}{2} \quad\text{and}\quad
 \tau\in(\tau_0+1,\infty).
 \end{array}
 \]

\subsubsection{(i) Estimate in $z\in\R^n$ and $\tau\in(\tau_0,\tau_0+1)$}
\label{Sec7.4.1}
In this case,
the estimate follows from Lemma \ref{Lem7.5}.

\subsubsection{(ii) Estimate in $|z|<4$ and $\tau\in(\tau_0+1,\infty)$}
\label{Sec7.4.2}
We divide the integral in \eqref{7.15} into two parts.
 \[
 \Phi_2(\tau)
 =
 \int_{\tau_0}^{\tau-1}
 e^{A_z(\tau-\tau')}e^{-\tau'}G_\text{out}^\bot
 d\tau'
 +
 \int_{\tau-1}^\tau
 e^{A_z(\tau-\tau')}e^{-\tau'}G_\text{out}^\bot
 d\tau'.
 \]
We apply Lemma \ref{Lem3.6} and Lemma \ref{Lem7.3} to get
 \begin{align*}
 \left|
 \int_{\tau_0}^{\tau-1}
 e^{A_z(\tau-\tau')}e^{-\tau'}G_\text{out}^\bot
 d\tau'
 \right|
 &\lesssim
 \int_{\tau_0}^{\tau-1}
 \left|
 e^{A_z\frac{1}{2}}e^{A_z(\tau-\tau'-\frac{1}{2})}
 e^{-\tau'}G_\text{out}^\bot
 \right|
 d\tau'
 \\
 &\lesssim
 \int_{\tau_0}^{\tau-1}
 \frac{\frac{e^{-\frac{1}{2}}|z|^2}{4(1+e^{-\frac{1}{2}})}}
 {(1-e^{-\frac{1}{2}})^\frac{n}{4}}
 \left\|
 e^{A_z(\tau-\tau'-\frac{1}{2})}e^{-\tau'}G_\text{out}^\bot
 \right\|_\rho
 d\tau'
 \\
 &\lesssim
 \int_{\tau_0}^{\tau-1}
 e^{-(l+1)(\tau-\tau')}
 \left\|
 e^{-\tau'}G_\text{out}^\bot
 \right\|_\rho
 d\tau'
 \\
 &\lesssim
 \int_{\tau_0}^{\tau-1}
 e^{-(l+1)(\tau-\tau')}
 e^{-(2l+\frac{1}{2})\tau'}
 d\tau'
 \\
 &\lesssim
 e^{-(l+1)\tau}
 \qquad\text{for } |z|<4,\ \tau\in(\tau_0+1,\infty).
\end{align*}
The second integral is reduced to Lemma \ref{Lem7.5}. 

\subsubsection{(iii) Estimate in $2<|z|<e^\frac{\tau-\tau_0}{2}$
and $\tau\in(\tau_0+1,\infty)$}
\label{Sec7.4.3}
For $2<|z|<e^\frac{\tau-\tau_0}{2}$,
we define $\tau_1\in(\tau_0,\tau-1)$ by
 \[
 |z|=e^\frac{\tau-\tau_1}{2}.
 \]
We write $\Phi_2(z,\tau)$ as
 \[
 \Phi_2(\tau)
 =
 \int_{\tau_0}^{\tau_1}
 e^{A_z(\tau-\tau')}e^{-\tau'}G_\text{out}^\bot
 d\tau'
 +
 \int_{\tau_1}^{\tau-1}
 e^{A_z(\tau-\tau')}e^{-\tau'}G_\text{out}^\bot
 d\tau'
 +
 \int_{\tau-1}^\tau
 e^{A_z(\tau-\tau')}e^{-\tau'}G_\text{out}^\bot
 d\tau'.
 \]
Since $\tau_0<\tau_1<\tau-1$ and $|z|=e^\frac{\tau-\tau_1}{2}$,
we get from Lemma \ref{Lem3.6} and Lemma \ref{Lem7.3} that
 \begin{align*}
 \left|
 \int_{\tau_0}^{\tau_1}
 e^{A_z(\tau-\tau')}
 e^{-\tau'}G_\text{out}^\bot
 d\tau'
 \right|
 &\lesssim
 \int_{\tau_0}^{\tau_1}
 \left|
 e^{A_z(\tau-\tau_1)}e^{A_z(\tau-\tau'+\tau_1)}
 e^{-\tau'}G_\text{out}^\bot
 \right|
 d\tau'
 \\
 &\lesssim
 \int_{\tau_0}^{\tau_1}
 \frac{\frac{e^{-(\tau-\tau_1)}|z|^2}{4(1+e^{-\frac{1}{2}})}}
 {(1-e^{-(\tau-\tau_1)})^\frac{n}{4}}
 \left\|
 e^{A_z(\tau_1-\tau')}e^{-\tau'}G_\text{out}^\bot
 \right\|_\rho
 d\tau'
 \\
 &\lesssim
 \int_{\tau_0}^{\tau_1}
 e^{-(l+1)(\tau_1-\tau')}
 \left\|
 e^{-\tau'}G_\text{out}^\bot
 \right\|_\rho
 d\tau'
 \\
 &\lesssim
 \int_{\tau_0}^{\tau_1}
 e^{-(l+1)(\tau_1-\tau')}
 e^{-(2l+\frac{1}{2})\tau'}
 d\tau'
 \\
 &\lesssim
 e^{-(l+1)\tau}.
\end{align*}
We next estimate the second term.
We apply Lemma \ref{Lem7.5} to get
 \begin{align*}
 \int_{\tau_1}^\tau
 e^{A_z(\tau-\tau')}e^{-\tau'}|G_\text{\rm out}^\bot|d\tau'
 &\lesssim
 \frac{1}{R^\frac{1}{4}}
 \frac{e^{-l\tau_1}}{1+|y|^\frac{1}{4}}
 +
 T^be^{-l\tau_1}
 +
 e^{-(2l+1)\tau}
 |z|^{4l+4}
 +
 e^{-(l+\frac{2l+1}{2l+2})\tau}
 |z|^{2l+2}.
 \end{align*}
Since $|z|=e^\frac{\tau-\tau_1}{2}$,
it holds that $e^{-l\tau_1}=e^{-l\tau}|z|^{2l}$.
Therefore
we deduce that
 \begin{align*}
 \int_{\tau_1}^\tau
 e^{A_z(\tau-\tau')}e^{-\tau'}|G_\text{\rm out}^\bot|d\tau'
 &\lesssim
 \frac{1}{R^\frac{1}{4}}
 \frac{e^{-l\tau}|z|^{2l}}{1+|y|^\frac{1}{4}}
 +
 T^be^{-l\tau}|z|^{2l}
 +
 e^{-(2l+1)\tau}
 |z|^{4l+4}
 +
 e^{-(l+\frac{2l+1}{2l+2})\tau}
 |z|^{2l+2}
 \\
 &\lesssim
 \frac{e^{-l\tau}}{R^\frac{1}{4}}
 |z|^{2l}
 +
 e^{-(2l+1)\tau}
 |z|^{4l+4}
 +
 e^{-(l+\frac{2l+1}{2l+2})\tau}
 |z|^{2l+2}.
 \end{align*}
The estimate for the last term is derived from Lemma \ref{Lem7.5}.

\subsubsection{(iv) Estimate in $|z|>e^\frac{\tau-\tau_0}{2}$ and
$\tau\in(\tau_0+1,\infty)$}
\label{Sec7.4.4}
We get from Lemma \ref{Lem7.5} that
 \begin{align*}
 \int_{\tau_0}^\tau
 e^{A_z(\tau-\tau')}e^{-\tau'}|G_\text{\rm out}^\bot|d\tau'
 &<
 c
 \left(
 \frac{1}{R^\frac{1}{4}}
 \frac{e^{-l\tau_0}}{1+|y|^\frac{1}{4}}
 +
 T^be^{-l\tau_0}
 +
 e^{-(2l+1)\tau}|z|^{4l+4}
 \right.
 \\
 &\qquad
 \left.
 +
 e^{-(l+\frac{2l+1}{2l+2})\tau}
 |z|^{2l+2}
 \right)
 \qquad\text{\rm for}\ (z,\tau)\in\R^5\times(\tau_1,\infty).
 \end{align*}
Since $|z|>e^\frac{\tau-\tau_0}{2}$,
it holds that $e^{-l\tau_0}<e^{-l\tau}|z|^{2l}$.
Therefore
we obtain the desired estimate.
Combining estimates (i) - (iv),
we complete the proof of Proposition \ref{Pro7.1}.

\subsection{Pointwise estimate for $\Phi(z,\tau)$ in $|z|>e^\frac{l\tau}{2l+2}$}
\label{Sec7.5}
We go back to the function $\varphi(z,\tau)$ defined in \eqref{7.2}.
 \begin{lem}\label{Lem7.6}
 There exists $K_4>0$ independent of $R$, $\delta_0$, $\sigma$ such that
 \[
 |\varphi(z,\tau)|
 <
 K_4T^\frac{l}{l+1}
 \qquad\text{\rm for}\ |z|>e^\frac{l\tau}{2l+2},\ \tau>\tau_0.
 \]
 \end{lem}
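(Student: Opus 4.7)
The plan is to return to the original variables $(x,t)$, writing $\varphi(z,\tau)=W(x,t)$ with $x=z\sqrt{T-t}$, and to use the Duhamel representation
\[
W(x,t)=e^{t\Delta}W_0(x)+\int_0^t\bigl(e^{(t-s)\Delta}G_\text{out}(\cdot,s)\bigr)(x)\,ds,
\qquad
W_0=(\mathbf{d}\cdot\mathbf{e})\chi_\text{out}\bigr|_{t=0}.
\]
The hypothesis $|z|>e^{l\tau/(2l+2)}$ translates to $|x|>(T-t)^{1/(2(l+1))}$, and I will bound each piece of $W$ in this region, exploiting that $T^{l/(l+1)}$ is a fixed constant.

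First I would handle the initial-data term. From \eqref{7.5} one has $|\mathbf{d}|\lesssim T^{2l+1}$, while $\chi_\text{out}|_{t=0}$ is supported in $|x|\lesssim T^{1/(4l+4)}$; combined with the polynomial bound $|e_k(z)|\lesssim 1+|z|^{2l}$ on that support, this yields $\|W_0\|_\infty\lesssim T^{(2l+1)(l+2)/(2l+2)}$. A direct arithmetic check shows $(2l+1)(l+2)/(2l+2)>l/(l+1)$ for all $l\geq 1$, and since the heat semigroup is an $L^\infty$-contraction, $\|e^{t\Delta}W_0\|_\infty\lesssim T^{l/(l+1)}$.

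For the source integral I would decompose $G_\text{out}$ according to the four spatial regions listed in Lemma \ref{Lem7.1} and treat each contribution separately via heat-kernel estimates. In the two outer regions where $|z|>e^{l\tau/(2l+2)}$, the pointwise bound is $|G_\text{out}|\lesssim 1+\delta_0^2/|x|^{2p}$ (plus the transient localized bump), so the time integral contributes at most $O(T)\leq T^{l/(l+1)}$. The two inner regions carry large amplitudes but are concentrated in a ball of radius $\sim R\lambda(s)\sim R(T-s)^{2l+2}$, which is much smaller than $(T-t)^{1/(2(l+1))}\leq|x|$; hence in the convolution against the Gaussian kernel $p(x-y,t-s)\lesssim(t-s)^{-5/2}\exp(-|x|^2/(8(t-s)))$ we get strong spatial decay, and integrating the product over $s\in(0,t)$ produces a contribution that can be shown to be $\lesssim T^{l/(l+1)}$.

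The main technical obstacle is this last $s$-integration for the most singular source $\frac{e^{-l\tau'}}{\lambda(s)^2 R^{1/4}}\frac{\mathbf{1}_{|y|<2R}}{1+|y|^{9/4}}$, whose $L^\infty$ size $\sim(T-s)^{-3l-4}$ is not time-integrable, so the Gaussian spatial decay is essential. The exponent $l/(l+1)$ is critical precisely at the scale $|x|\sim(T-t)^{1/(2(l+1))}$: a straightforward estimate using $\|e^{(t-s)\Delta}\|_{L^\infty\to L^\infty}\leq 1$ would fail because of the singularity, while too crude a Gaussian estimate would give only $T^{l/(2(l+1))}$. One must therefore balance these two effects via a careful saddle-type splitting of the $s$-integral into $s<t-|x|^2$ and $s>t-|x|^2$, simultaneously verifying that the intermediate-region contribution $e^{-2l\tau'}|z|^{4l+4}$ (arising from $\mathsf{N}(v)$ in $1<|z|<e^{l\tau'/(2l+2)}$) is likewise controlled by $T^{l/(l+1)}$ through Lemma \ref{Lem3.7}.
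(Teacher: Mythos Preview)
Your approach is genuinely different from the paper's and, as written, has a real gap.

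The paper does \emph{not} work via Duhamel in the $(x,t)$ variables. Instead it stays in self-similar variables and applies a parabolic comparison argument on the exterior domain $\{|z|>e^{l\tau/(2l+2)}\}$ with the spatially constant supersolution
\[
\bar\varphi(\tau)=K\bigl(2e^{-l\tau_0/(l+1)}-e^{-l\tau/(l+1)}\bigr).
\]
The crucial point is that the boundary condition on the moving surface $|z|=e^{l\tau/(2l+2)}$ is supplied by the estimates already proved for the \emph{inner} region (equation (7.16), coming from Proposition~\ref{Pro7.1}, together with \eqref{7.4} and Lemma~\ref{Lem7.2}). The singular inner source therefore never has to be propagated across the boundary by hand: all of that work was done in Section~\ref{Sec7.4}, and the comparison principle consumes it as a Dirichlet datum. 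What remains is the easy verification that $e^{-\tau}|G_\text{out}|\lesssim e^{-l\tau/(l+1)}$ in the exterior and that the initial data is dominated.

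Your plan bypasses this decoupling and tries to bound the full Duhamel integral in the exterior directly. You yourself flag the obstacle: the inner source has $L^\infty$ size $\sim(T-s)^{-3l-4}$, and you say one ``must balance via a careful saddle-type splitting'' --- but you do not carry it out. That is precisely the hard step the paper's method avoids. Moreover, your claim that the outer-region source contributes $O(T)$ is too quick: the ``transient localized bump'' is $h_\text{out}$, whose amplitude on its support is $\sim(T-s)^{-(l+2)/(2l+2)}$ (see \eqref{7.7}), so naive $L^\infty$ time-integration yields only $T^{l/(2(l+1))}$, strictly larger than $T^{l/(l+1)}$. You would again need to exploit the thin-annulus localization, which you have not done.

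In short, the paper's comparison argument leverages the inner estimate as a boundary condition and reduces the lemma to a one-line supersolution check; your direct Duhamel route forfeits that leverage and leaves two nontrivial estimates unproven.
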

\begin{proof}
We recall that
 \[
 \varphi(z,\tau)
 =
 {\bf b}(\tau)\cdot{\bf e}+\Phi_1(z,\tau)+\Phi_2(z,\tau).
 \]
We now check that
 \[
 \bar\varphi(z,\tau)
 =
 K\left( 2e^{-\frac{l\tau_0}{l+1}}-e^{-\frac{l\tau}{l+1}} \right)
 \]
gives a super solution in $|z|>e^\frac{l\tau}{2l+2}$.
From \eqref{7.4}, Lemma \ref{Lem7.2} and \eqref{7.16},
we note that
 \[
 |\varphi(z,\tau)|
 \lesssim
 e^{-(2l+1)\tau}
 |z|^{2l}
 +
 e^{-(l+1)\tau}
 |z|^{2l+2}
 +
 e^{-l\tau}|z|^{2l}
 +
 e^{-(l+\frac{2l+1}{2l+2})\tau}
 |z|^{2l+2}
 \]
for $|z|>1$.
Therefore
we get
 \[
 |\varphi(z,\tau)|
 \lesssim
 e^{-\frac{l\tau}{l+1}}
 \qquad \text{for } |z|=e^\frac{l\tau}{2l+2}.
 \]
Furthermore
by \eqref{7.5},
we see that the initial data satisfies
 \begin{align*}
 |\varphi(\tau_0)|
 &=
 |{\bf d}\cdot{\bf e}|
 \chi_\text{out}
 \lesssim
 e^{-(2l+1)\tau_0}
 \left( 1+|z|^{2l} \right)
 {\bf 1}_{|z|<2e^\frac{(l+\frac{1}{2})\tau_0}{2l+2}}
 \\
 &\lesssim
 e^{-(2l+1)\tau_0}
 \left( 1+|z|^{2l} \right)
 {\bf 1}_{|z|<e^\frac{\tau_0}{2}}
 \lesssim
 e^{-(l+1)\tau_0}.
 \end{align*}
Finally
we get from Lemma \ref{Lem7.1} that
 \begin{align*}
 e^{-\tau}|G_\text{out}|
 &\lesssim
 e^{-(l+\frac{2l+1}{2l+2})\tau}
 |z|^{2l+2}
 {\bf 1}_{e^\frac{(l+\frac{1}{2})\tau}{2l+2}<|z|<2e^\frac{(l+\frac{1}{2})\tau}{2l+2}}
 +
 \delta_0^2e^{-\tau}
 \\
 &\lesssim
 e^{-\frac{l\tau}{l+1}}
 \qquad\text{for } |z|>e^\frac{l\tau}{2l+2}.
\end{align*}
From this estimate,
we verify that if $K\gg1$
 \[
 \bar\varphi_\tau-A_z\bar\varphi
 =
 \frac{lK}{l+1}e^{-\frac{l\tau}{l+1}}
 \geq
 e^{-\tau}|G_\text{out}|
 \qquad\text{for } |z|>e^\frac{l\tau}{2l+2}.
 \]
Therefore
by a comparison argument,
we obtain
 \[
 |\varphi(z,\tau)|
 \lesssim
 \bar\varphi(z,\tau)
 \lesssim
 Ke^{-\frac{l\tau_0}{l+1}}
 =
 KT^\frac{l}{l+1}
 \qquad\text{for } |z|>e^\frac{l\tau}{2l+2},\ \tau>\tau_0.
 \]
The proof is completed.
\end{proof}
We now assume that
 \[
 T^\frac{l}{l+1}=e^{-\frac{lR}{l+1}}
 <
 \delta_0^2.
 \]
From this relation,
Lemma \ref{Lem7.6} is rewritten as
 \begin{align}\label{7.17}
 |\varphi(z,\tau)|
 <
 K_4\delta_0^2
 \qquad\text{for } |z|>e^\frac{l\tau}{2l+2},\ \tau>\tau_0.
 \end{align}
We finally derive estimates in $|x|>1$.
Let $W(x,t)$ be a solution of \eqref{7.1}.
 \begin{lem}\label{Lem7.7}
There exists $K_5>0$ independent of $R$, $\delta_0$, $\sigma$ such that
 \[
 |W(x,t)|
 <
 K_5\left( \frac{T^{3l+3}}{|x|^3}+\frac{\delta_0^2}{|x|^2} \right)
 \qquad\text{\rm for}\ |x|>1,\ t\in(0,T).
 \]
 \end{lem}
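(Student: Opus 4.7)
The plan is to apply a comparison argument on the exterior domain $\{|x|>1\}\times(0,T)$, using an explicit super-solution tailored to the decay of $G_\text{out}$ in the far field.

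First, I would pin down the parabolic boundary data of $W$. At $|x|=1$, since $(T-t)^{1/(2l+2)}<1$, estimate \eqref{7.17} applies and gives $|W(x,t)|\leq K_4\delta_0^2$. At $t=0$, the initial datum $(\mathbf{d}\cdot\mathbf{e})\chi_\text{out}|_{t=0}$ is supported in $|x|<2T^{1/(4l+4)}\ll 1$ (using $T=e^{-R}$ with $R\gg 1$), so $W(\cdot,0)\equiv 0$ on $\{|x|\geq 1\}$.

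Next, I would take the super-solution
\[
\bar W(x,t)=C_1\,\frac{T^{3l+3}-(T-t)^{3l+3}}{|x|^3}+C_2\,\frac{\delta_0^2}{|x|^2}.
\]
Since $|x|^{-3}$ is harmonic in $\R^5\setminus\{0\}$ and $\Delta|x|^{-2}=-2|x|^{-4}$, a direct calculation gives
\[
\bar W_t-\Delta\bar W=(3l+3)C_1\,\frac{(T-t)^{3l+2}}{|x|^3}+2C_2\,\frac{\delta_0^2}{|x|^4}.
\]
The last case of Lemma \ref{Lem7.1} bounds $|G_\text{out}|\lesssim(T-t)^{3l+2}/|x|^3+\delta_0^2/|x|^{2p}$ for $|x|>1$, and $2p=14/3>4$ forces $|x|^{-2p}\leq|x|^{-4}$ on $|x|\geq 1$; hence, choosing $C_1,C_2$ sufficiently large (with $C_2\geq K_4$) ensures simultaneously $\bar W_t-\Delta\bar W\geq|G_\text{out}|$ in $\{|x|>1\}\times(0,T)$, $\bar W(x,0)=C_2\delta_0^2/|x|^2\geq 0=W(x,0)$, and $\bar W|_{|x|=1}\geq C_2\delta_0^2\geq K_4\delta_0^2\geq |W(x,t)|_{|x|=1}$.

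Writing $v_{\pm}:=\pm W-\bar W$, each $v_{\pm}$ satisfies $(\partial_t-\Delta)v_{\pm}\leq 0$ on $\{|x|>1\}\times(0,T)$, vanishes or is non-positive on the parabolic boundary, and is globally bounded there (using $|W|\leq K_4\delta_0^2$ from Lemma \ref{Lem7.6} and the boundedness of $\bar W$). The standard comparison principle for bounded sub-solutions of the heat equation on exterior domains then yields $\pm W\leq\bar W$, whence $|W|\leq\bar W\leq C(T^{3l+3}/|x|^3+\delta_0^2/|x|^2)$, which is the desired inequality. The main delicate point is handling the $(T-t)^{3l+2}/|x|^3$ contribution of $G_\text{out}$: because $|x|^{-3}$ is precisely the Newton kernel in $\R^5$ and therefore harmonic on $\R^5\setminus\{0\}$, no purely spatial super-solution of the form $C/|x|^3$ can absorb that source. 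The time profile $T^{3l+3}-(T-t)^{3l+3}$ is chosen so that $\partial_t\bar W$ alone dominates this term while $\bar W$ still obeys the pointwise bound $CT^{3l+3}/|x|^3$; a secondary routine point is the legitimacy of the comparison principle on the unbounded annular domain, which is secured by the global $L^\infty$ control inherited from Lemma \ref{Lem7.6}.
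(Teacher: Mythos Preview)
Your argument is correct and matches the paper's proof essentially line by line: the same super-solution $\bar W(x,t)=C_1\frac{T^{3l+3}-(T-t)^{3l+3}}{|x|^3}+C_2\frac{\delta_0^2}{|x|^2}$, the same boundary checks via \eqref{7.17} and the vanishing of the initial data on $|x|>1$, and the same use of Lemma \ref{Lem7.1} together with $2p>4$ to dominate $|G_{\text{out}}|$. Your additional remarks on why the time profile is needed (harmonicity of $|x|^{-3}$ in $\R^5$) and on the validity of the comparison principle on the exterior domain are accurate elaborations of points the paper leaves implicit.
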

\begin{proof}
We now claim that
 \[
 \bar{W}(x,t)
 =
 \frac{K}{|x|^3}
 \left(
 T^{3l+3}-(T-t)^{3l+3}
 \right)
 +
 \frac{K\delta_0^2}{|x|^2}
 \]
gives a super solution in $|x|>1$.
Since $\varphi(z,\tau)=W(x,t)$ (see Section \ref{Sec7.1}),
from \eqref{7.17},
we verify that if $K>K_4$
 \begin{align*}
 |W(x,t)|
 <
 K_4\delta_0^2
 \leq
 \bar W(x,t)
 \qquad\text{for } |x|=1.
 \end{align*}Furthermore
from Lemma \ref{Lem7.1},
we recall that
 \[
 |G_\text{out}|
 \lesssim
 \frac{(T-t)^{3l+2}}{|x|^3}+\frac{\delta_0^2}{|x|^{2p}}
 \qquad\text{for } |x|>1.
 \]
Therefore
since $p>2$,
we get if $K\gg1$
 \[
 \bar W_t-\Delta_x\bar W
 =
 \frac{(3l+3)K}{|x|^3}(T-t)^{3l+2}
 +
 \frac{2K\delta_0^2}{|x|^4}
 \geq
 |G_\text{out}|
 \qquad\text{for } |x|>1.
 \]
Since $W(x,t)=0$ for $|x|>1$, $t=0$ (see \eqref{7.1}),
by a comparison argument,
we conclude
 \[
 |W(x,t)|
 <
 \bar W(x,t)
 \qquad\text{for } |x|>1,\ t\in(0,T).
 \]
This complete the proof.
\end{proof}

\section{Proof of Theorem \ref{Thm1}}
We perform the argument mentioned in Section \ref{Sec5.2} rigorously.
We fix $\delta_0\in(0,1)$ small enough.
Let $\tilde w(x,t)\in C(\R^5\times[0,T])$ be an extension of $w(x,t)\in X_\sigma$
defined in Section \ref{Sec5.2},
$(\lambda(t),\epsilon(y,t))$ be a pair of functions constructed
in Section \ref{Sec6},
and $W(x,t)$ be a function obtained in Section \ref{Sec7}.
We now define
 \[
 w(x,t)\in X_\sigma
 \mapsto
 W(x,t)\in C(\R^5\times[0,T))\cap C^{2,1}(\R^5\times(0,T)).
 \]
From Proposition \ref{Pro7.1} and Lemma \ref{Lem7.6} - Lemma \ref{Lem7.7},
there exists $R_0>0$ such that $W(x,t)\in X_\sigma$ if $R>R_0$.
By the way of construction,
the mapping
$w(x,t)\in(X_\sigma,d_{X_\sigma})\mapsto W(x,t)\in(X_\sigma,d_{X_\sigma})$
is continuous,
where $d_{X_\sigma}$ is the metric in $X_\sigma$ defined in \eqref{5.8}.
Furthermore
we note that
 \[
 \frac{\alpha_l^2}{2}(2\sigma)^{2l+2}\lambda(t)<2\alpha_2^2T^{2l+2}
 \qquad\text{and}\qquad
 \tau_0<\tau<|\log(2\sigma)|
 \qquad
 \text{for } t\in(0,T-2\sigma).
 \]
Therefore
from the local H\"older estimate and the decay of space infinity
$W(x,t)<\frac{\delta_0}{1+|x|^2}$ for $|x|>1$,
we find that the mapping is compact.
From the Schauder fixed point theorem,
we obtain a function $w(x,t)=W(x,t)\in X_\sigma$,
which is denoted by $w_\sigma(x,t)$.
Since $\tilde w_\sigma(x,t)=w_\sigma(x,t)$ for $t\in(0,T-2\sigma)$
(see Section \ref{Sec5.2}),
a pair of functions $(w_\sigma(x,t),\lambda_\sigma(t),\epsilon_\sigma(y,t))$
gives a solution of \eqref{5.5} for $t\in(0,T-2\sigma)$.
We take $\sigma\to0$ to obtain $w(x,t)=\lim_{\sigma\to0}w_\sigma(x,t)$.
This is the desired solution,
which has the form
 \[
 u(x,t)
 =
 {\sf Q}_{\lambda}
 +
 \Theta(x,t)\chi_\text{out}
 +
 \lambda^{-\frac{n-2}{2}}\epsilon(y,t)\chi_\text{in}
 +
 w(x,t)
 \]
with
 \[
 \left| \lambda(t)-\alpha_l^2(T-t)^{2l+2} \right|
 <
 K\left( \delta_0+\frac{T-t}{R^2} \right)
 (T-t)^{2l+2}.
 \]
If we repeat the argument for any $A<0$ (we fixed $A=-1$ in Section \ref{Sec4}),
we obtain the solution described in Theorem \ref{Thm1}.

\section*{Acknowledgement}
The author is partly supported by
Grant-in-Aid for Young Scientists (B) No. 26800065.


\end{document}